\documentclass{imsart}

\usepackage{stackengine}

\usepackage[textsize=footnotesize]{todonotes}
\usepackage{xcolor}
\usepackage{hyperref}

\usepackage{mathrsfs}
\usepackage{graphicx}
\usepackage{amsmath}        
\usepackage{amssymb}
\usepackage{amsthm}
\usepackage{bm}
\usepackage[mathscr]{euscript}
   
        \usepackage[countmax]{subfloat}  
\usepackage{fourier}
\usepackage{xcolor}
\usepackage{mathtools}
\usepackage{enumerate}
\usepackage{enumitem}
\counterwithin{figure}{section}
\usepackage[numbers]{natbib}
\usepackage{url}
\usepackage{verbatim}
\usepackage{booktabs}
\usepackage{placeins}
\usepackage{multirow}
\usepackage{subcaption}
\usepackage[cmtip,all]{xy}
\usepackage{aliascnt}
\usepackage{ifthen}
\hypersetup
{
    pdfauthor={Anne van Delft},
    pdfkeywords={functional data, time series, spectral analysis,martingales}
}
\newcounter{alphcount}
{\begin{list}{{\upshape(}\alph{alphcount}\/{\upshape)\ }}%
             {\usecounter{alphcount}\labelwidth1.5em%
              \bigmargin2em\labelsep0.5em\topsep0.25em plus 0.5ex%
              \itemsep0.25em plus 0.5ex\parsep0em}}{\end{list}}
{\begin{list}{{\upshape(#1\arabic{alphcount})\hfill}}%
             {\usecounter{alphcount}\labelwidth2.5em%
              \bigmargin2.5em\labelsep0em\topsep0.25em plus 0.5ex%
              \itemsep0.25em plus 0.5ex\parsep0em}}{\end{list}}
\newcommand\tageq{\addtocounter{equation}{1}\tag{\theequation}}

\newcommand{\mynewtheorem}[2]{
  \newaliascnt{#1}{dummy}
  \newtheorem{#1}[#1]{#2}
  \aliascntresetthe{#1}
  \expandafter\def\csname #1autorefname\endcsname{#2}
}
 
\theoremstyle{plain}
  \mynewtheorem{thm}{Theorem}
  \mynewtheorem{Proposition}{Proposition}
  \mynewtheorem{Corollary}{Corollary}
  \mynewtheorem{assumption}{Assumption}
  \mynewtheorem{lemma}{Lemma}
\theoremstyle{definition}
  \mynewtheorem{definition}{Definition}
  \mynewtheorem{example}{Example}
\theoremstyle{remark}
  \mynewtheorem{Remark}{Remark}

\newcommand{\im}{\mathrm{i}}

\newcommand{\mb}[1]{\mathbb{#1}}
\newcommand{\mc}[1]{\mathcal{#1}}

\newcommand{\znum}{\mathbb{Z}}
\newcommand{\cnum}{\mathbb{C}}
\newcommand{\rnum}{\mathbb{R}}
\newcommand{\nnum}{\mathbb{N}}
\newcommand{\E}{\mathbb{E}}
\newcommand{\pr}{       \mathbb{P}}

\newcommand{\flo}[1]{\lfloor#1\rfloor}
\newcommand{\F}{\mathcal{F}}
\newcommand{\G}{\mathcal{G}}

\newcommand{\bv}{\Big\vert}
\newcommand{\hti}{\mathfrak{h}}
\newcommand{\inprod}[2]{\langle #1,#2 \rangle}
\allowdisplaybreaks

\newcommand{\rkhs}{\mc{H}}
\newcommand{\bignorm}[1]{\Big{\|}{#1}\Big{\|}}

\newcommand{\biginprod}[2]{\Big\langle #1, #2 \Big\rangle}

\newcommand{\longsquiggly}{\xymatrix@C=1.5em{{}\ar@{~>}[r]&{}}}
\newcommand{\medsquiggly}{\xymatrix@C=1.2em{{}\ar@{~>}[r]&{}}}

\providecommand{\AMS}[1]{\textbf{\textit{AMS subject classification: }} #1}

\newcounter{relctr} 
\everydisplay\expandafter{\the\everydisplay\setcounter{relctr}{0}} 

\AtBeginDocument{}

\newboolean{withproof}
\setboolean{withproof}{true}

\begin{document}

\begin{frontmatter}
    
\title{
Characterizing metric-space-valued processes: separating classes and weak invariance principles for measure-theoretic inference}

\maketitle

\begin{aug}
\author{\fnms{Anne} \snm{van Delft}\ead[label=e1,mark]{anne.vandelft@columbia.edu}}

\address{Department of Statistics, Columbia University, 1255 Amsterdam Avenue, New York, NY 10027, USA.\\}
\end{aug}

\begin{abstract}

This article investigates stochastic processes taking values in metric spaces that lack a topological vector space structure, a regime characterized by intricate interplay between topological, geometric, and temporal dependence structures. It is formally established that spaces admitting an isometric Hilbertian embedding constitute a strict subclass within the much broader class of metric spaces possessing the ball property. While traditional kernel methods are susceptible to geometric distortion when the underlying space cannot be isometrically embedded into a Hilbert space, we bypass such limitations by exploiting a fundamental structural property inherent to this broader class; namely, that Borel probability measures are uniquely determined by their values on balls. These separating classes provide the foundation for the subsequently introduced measure-theoretic inference methodology. We derive uniform convergence of a family of time-dependent random measures, alongside weak invariance principles for the corresponding nonstationary random fields. This framework explicitly exposes how dependence and geometric complexity influence sample path regularity. Furthermore, because the rapid decay of small-ball probabilities can prohibit the existence of limiting distributions for supremum-based discrepancy measures, we develop $L^p$-based alternatives. By directly leveraging the introduced convergence results, this approach circumvents the need for higher-order $U$-process formulations. Finally, for spaces that do admit an isometric Hilbertian embedding, and where $U$-processes naturally arise, we establish limit theory for both degenerate and nondegenerate multi-parameter $U$-processes, and demonstrate that local discrepancy tests maintain asymptotic stability under dynamic parameter regimes.
 \end{abstract}

\AMS{Primary 60F17, 60B05, 60G57; Secondary 62M10, 62G20, 62E20, 60G15}.

\begin{keyword}
\kwd{Determination of measures}
\kwd{Metric geometry}
\kwd{Locally stationary random fields}
\kwd{Invariance principles}
\kwd{U-processes}
\kwd{Hilbertian embeddings}
\kwd{Measure-theoretic inference}
\end{keyword}

\end{frontmatter}


\section{Introduction}

The complex mathematical structure of modern data sets requires inference techniques that account not just for inherent spatial and temporal dynamics, but also for geometric and topological structure. To accommodate this, it is important to view data as points in a metric space that lacks a topological vector space structure. The incompatibility with a vector space structure means only the points and a notion of distance between them are available. Inference under this restriction effectively forces the specification of a class of functions that is rich enough to characterize probability measures, where the intrinsic geometry plays a key role in understanding how `rich' this class should be.

Various metrics may induce the same topology and thus the same Borel $\sigma$-algebra, but some metrics are more compatible with the intrinsic geometry of the space. For example, consider $\mb{S}^1=\{ (x_1,x_2) \in \rnum^2: x_1^2+x_2^2=1\}$, the unit circle. We may view this as a metric subspace of $\rnum^2$ endowed with the standard Euclidean metric. However, the  shortest arc distance (geodesic metric) is more compatible with the intrinsic geometry of the space. Indeed, even though both metrics induce the same topology when restricted to the circle and hence the same open sets, the Euclidean distance measures closeness of two points using points that are not on the circle, failing to respect the intrinsic geodesic structure. More generally, if a chosen embedding is not isometric, metric distortions can obscure local geometric features, making it difficult to distinguish differences between probability measures.

The aim of this article is two-fold. The first goal is to establish tractable separating classes for measure-theoretic inference that respect the intrinsic geometry. In settings where the space is isometric to a subset of a Hilbert space, the provided characterization of the separating class naturally places inference in the realm of reproducing kernel Hilbert spaces (RKHS) via Schoenberg's theorem \cite{schoenberg1938}. However, if no such isometric embedding exists then a richer class of functions should be considered. In particular, we establish in this article that spaces that are isometrically embeddable into a Hilbert space form a strict subclass of the metric spaces for which Borel measures are determined by their values on balls \cite{Christensen,PreissTisier91}---a characteristic we henceforth define as the \textit{ball property}. Crucially, because this determination of measures does not automatically pull back to arbitrary metric subspaces, this containment is not an immediate consequence of the property holding on the ambient Hilbert space. To fully characterize this demarcation, a necessary and sufficient condition is provided for the ball property to hold in general metric spaces, a result of independent interest. A key insight is that although a valid Hilbertian embedding may fail due to geometric constraints, this broader framework allows us to preserve statistical distinguishability while fully respecting the intrinsic geometry.

Evaluation of processes taking values in spaces that possess the ball property on the separating classes naturally yields multi-parameter nonstationary random fields. The second goal is to derive weak invariance principles and limit theory for these fields, establishing the necessary foundations for measure-theoretic inference.

This article is organized as follows. \autoref{sec:sec2} establishes the geometric framework and characterizes separating classes for spaces that possess the ball property, including a detailed investigation into reproducing kernel Hilbert spaces on separable metric spaces and isometric embeddings into a Hilbert space. Crucially, we establish an impossibility result demonstrating that global supremum-norm density (often called universality) of an RKHS cannot exist on any non-compact Polish space, highlighting the fundamental topological boundaries of standard kernel embedding frameworks.

\autoref{sec:sec3} introduces the mathematical foundations for measure-theoretic inference. Within this section, invariance principles for nonstationary random fields are derived under general metric geometries as well as under Hilbertian geometry. This is done for the class of locally stationary Polish-valued processes as defined in \cite{vDB25}. Limiting representations of the partial sum processes are made explicit, providing a meaningful basis for testing for validity of model constraints. The focus is first on the larger classes that possess the ball property. We investigate the conditions under which supremum-based discrepancy measures in the form of integral probability metrics (such as the Wasserstein-1 distance, which is fundamental in optimal transport due to the Kantorovich-Rubinstein duality theorem) can be reasonably estimated. We also address scenarios where the intrinsic dimension forces the consideration of $L^p$-based discrepancy measures. For the latter, limit theory is established by directly combining the uniform convergence result of the empirical random measures with the weak invariance principles for the random fields. 
It will become clear that the geometric complexity plays a role in the conditions necessary to ensure sample path regularity for the sequential estimators used to construct these discrepancy measures. 

In the subsequent section, weak invariance principles are introduced for multi-parameter nonstationary $U$-processes. Characterizations of measures or corresponding test statistics can often be cast in the form of a continuous transformation of a multi-parameter $U$-process with a kernel function that satisfies certain properties. This structural representation naturally encompasses the RKHS framework.  Limit theorems are given when the $U$-process is  degenerate as well as when it is non-degenerate. The invariance principle in the nondegenerate case does not rely on strict positive-definiteness of the kernel.  The weak invariance principles are showcased to construct tests for local discrepancies in measure.  Notably, the weak invariance principles introduced here serve as a robust foundation for measure-theoretic inference across a wide spectrum of assumptions, ranging from iid Polish-valued processes to complex nonstationary processes of random geometric objects.
   
\section{Geometric framework and separating classes}\label{sec:sec2}
Consider a measurable space $(S,\mc{S})$. Throughout this article, we assume that $S$ is a separable metrizable topological space and that $\mc{S}=\mc{B}(S)$ is the Borel $\sigma$-algebra on $S$.  
 $\Gamma$ will denote a class of measurable functions $f: S \to \rnum$, though sometimes we consider $f: S \to \cnum$ to incorporate characteristic functionals.

Let $\mc{P}(S)$ be the space of Borel probability measures on $S$. We recall that a collection $\Gamma$ separates measures on $(S, \mc{S})$ whenever, for any  $\mu, \nu \in \mc{P}(S)$, 
    \[
    \int f(x) \mu(dx) = \int f(x) \nu (dx) \quad \forall f \in \Gamma
    \]
  implies that 
  $\mu=\nu$. For example, if $S=\rnum$, then $\Gamma=\{e^{\im \lambda (\cdot)}: \lambda \in \rnum\}$, where $\im=\sqrt{-1}$, is a separating class \cite{Levy1925}. For multivariate distributions the 1-dimensional projections are separating \cite{CramerWold1936}. More generally, Kolmogorov showed that the finite-dimensional distributions of a probability measure on a function space form a separating class
       \citep[see e.g.,][]{kolmogorov1933,bil68}. Parzen \cite{parzen1963} made use of this result to show that the mean functional of a Gaussian process with fixed covariance kernel uniquely identifies the law of the process if and only if the kernel is strictly positive definite.

 Suppose we are interested in testing whether any two Borel probability measures $\mu, \nu$ on $S$ are equal. In view of the above, a common and obvious choice to formalize a test is to consider what is now known as \textit{the integral probability semi-metric}, a term coined by \cite{Muller1997}. Defined by the class $\Gamma$, this semi-metric is given by
    \[
    \gamma_{\Gamma}(\mu,\nu):= \sup_{f \in \Gamma}\bv \int f  d\mu - \int f  d\nu\bv.\tageq \label{eq:ipsm}
    \]
It is a metric in case $\gamma_{\Gamma}(\mu,\nu)=0$ implies $\mu=\nu$. 
Note that this inherently means that the class of functions $\Gamma$ must be rich enough to form a separating class for the space of Borel probability measures on $S$. Such supremum-based metrics over function classes were first considered by \cite{fortet1953}, and later by \cite{Prokhorov1956,Dudley1966} to study and  metrize convergence of probability measures. 
Note that the total variation distance, the Dudley metric and the Kolmogorov metric are all particular cases  of \eqref{eq:ipsm} induced by different choices for the class $\Gamma$. Noteworthy is also the case where the class $\Gamma$ is taken to be the unit ball in a reproducing kernel Hilbert space. This was pioneered by Parzen \cite{parzen1963}  who effectively showed that its square corresponds to the squared distance of the mean embeddings. In the 1980's, the corresponding distance became known as \textit{Energy distance} \cite{Szekely1985}. More recently, it has become popularized in the machine learning community under the name \textit{maximum mean discrepancy} \cite{gretton2012,borgwardt2006,sriperumbudur2010}.

The choice of $\Gamma$ and hence the particular (closed) form of \eqref{eq:ipsm} will play a role in the properties of this semi-metric. We are interested in the scenario where the only information the researcher has are the points in the space and a notion of distance between the points, for which the researcher must (a priori) specify a metric. We can often equip $S$ with various metrics, say $d_1$ and $d_2$ that will induce the same topology $\tau_S$ on $S$. Consequently, the identity map  $i: (S,d_1) \to (S, d_2)$ is a homeomorphism. An example was given in the introduction for the circle $\mb{S}^1$. Given that both metric topologies generate the same open sets, it becomes natural to consider suitable functionals of the distances induced by one of these metrics for the function class. However, choosing a metric that is not compatible with the intrinsic geometry could lead to invalid inference. 
For example, the topological structure of the circle is non-contractible and is therefore incompatible with any topological vector space structure. Spaces that are compact or have a boundary will also be incompatible as these lack 
 the required properties to support linear operations, but as long as its intrinsic curvature is flat enough and the space is contractible (e.g., the disc or unit square) then such spaces can still be isometrically embedded into a Hilbert space. This embeddability is an extremely attractive property from the point of view of inference due to Schoenberg's condition\cite{schoenberg1938}. Essentially, in this case, a natural choice for the class of $\Gamma$ is the unit ball in a reproducing kernel Hilbert space which reduces to the estimation of $U$-statistics. 
However, spaces that are non-contractible or are contractible but exhibit positive curvature such as $\mb{S}^n$ will not be isometric with a subset of a Hilbert space, making these types of spaces not compatible with (geometry-aware) inference via an RKHS. Furthermore, negatively curved spaces may also fail to be compatible \cite{Davies}.

\subsection{Characterization of separating classes for spaces that possess the ball property}

As a starting point, denote by $B_r(x) = \{y \in S: d(x,y) < r\}$ the open ball centered at $x$ of radius $r$, and ${\bf B}=\{B_r(x): x \in S, r >0\}$ the collection of open balls. The following provides a necessary and sufficient condition for Borel measures to be fully characterized by their values on balls. 
  \begin{lemma}\label{lemm:ballsspan}
    Let $(S, d_{S})$ be a separable metric space. Then the linear span of $\{1_B: B \in {\bf B}\}$ is dense in $L^1(S,\mc{S},\mu)$ for every Borel probability measure $\mu$ if and only if 
    ${\bf B}$ forms a separating class. 
\end{lemma}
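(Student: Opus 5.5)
We prove the two implications separately; the reverse direction is the substantive one.

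\emph{Density implies separation.} Suppose the span of $\{1_B: B\in{\bf B}\}$ is dense in $L^1(\mu)$ for every Borel probability $\mu$. Let $\mu,\nu\in\mc{P}(S)$ agree on ${\bf B}$, and put $\lambda=(\mu+\nu)/2$, which is again a Borel probability measure. Since $\mu,\nu\le 2\lambda$, we have $\|g\|_{L^1(\mu)}+\|g\|_{L^1(\nu)}\le 4\|g\|_{L^1(\lambda)}$ for every measurable $g$. Fix $A\in\mc{S}$ and choose simple functions $g_n=\sum_k c_k 1_{B_k}$ in the span with $g_n\to 1_A$ in $L^1(\lambda)$. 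Then $g_n\to 1_A$ in both $L^1(\mu)$ and $L^1(\nu)$. Moreover $\int g_n\,d\mu=\int g_n\,d\nu$ for every $n$, by linearity and agreement on balls. Letting $n\to\infty$ gives $\mu(A)=\nu(A)$, so ${\bf B}$ is separating.

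\emph{Separation implies density.} Fix $\mu\in\mc{P}(S)$ and let $V$ be the $L^1(\mu)$-closure of the span. By Hahn--Banach, if $V\neq L^1(\mu)$ there is a nonzero $h\in L^\infty(\mu)=(L^1(\mu))^*$ with $\int 1_B h\,d\mu=0$ for all $B\in{\bf B}$. Rescale so that $\|h\|_\infty\le 1$, and write $h=u+\im v$ in the complex case, keeping a nonzero real part. Now define $\mu_\pm=(1\pm h)\mu$. Both are nonnegative Borel measures with total mass $1\pm\int h\,d\mu$.

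The obstacle is normalizing these to probability measures, which needs $\int h\,d\mu=0$. Since $S$ is a countable union of balls, this follows from $\int h 1_{B_n(x_0)}\,d\mu=0$ for each $n$ together with dominated convergence, because $B_n(x_0)\uparrow S$. So $\mu_+$ and $\mu_-$ are probability measures, and they agree on every ball, since $\mu_\pm(B)=\mu(B)\pm\int_B h\,d\mu=\mu(B)$. Separation then forces $\mu_+=\mu_-$, i.e.\ $\int_A h\,d\mu=0$ for all $A\in\mc{S}$, so $h=0$ $\mu$-a.e., a contradiction. Hence $V=L^1(\mu)$.

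The only delicate points are choosing $h$ real and bounded by $1$ so that $1\pm h\ge 0$, and the mass-balance step just described. Separability of $S$ is not really needed beyond Borel measurability of balls, although it guarantees that the balls generate $\mc{S}$. For the first direction, the key device is the dominating measure $\lambda$, which transfers a single approximation to both $\mu$ and $\nu$.
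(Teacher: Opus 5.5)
Your proof is correct and follows essentially the same route as the paper: the mixture $\lambda=(\mu+\nu)/2$ together with density in $L^1(\lambda)$ for one direction (you approximate $1_A$ directly rather than passing through Radon--Nikodym densities and duality), and Hahn--Banach with an annihilating $h\in L^\infty(\mu)$ for the other. Your pair $(1\pm h)\mu$ plays the role of the paper's Jordan decomposition $\eta^\pm$ of $h\,d\mu$. Your exhaustion argument $B_n(x_0)\uparrow S$, which gives $\int h\,d\mu=0$, supplies the equal-mass normalization that the paper uses only implicitly when it applies the separating property (a statement about probability measures) to $\eta^+$ and $\eta^-$.
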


\begin{proof}[Proof of \autoref{lemm:ballsspan}]
    Denote the linear span of the indicators of the open balls by $\mc{A}=\text{span}\{1_B: B \in {\bf B}\}$ and assume that it is dense for all Borel probability measures $\mu$ on $S$. Suppose 
    \[
    \mu_1(B)=\mu_2(B) \quad\forall B \in {\bf B}.
    \]
    then for all $f \in \mc{A}$, 
    \[
    \int f d\mu_1 = \int f d\mu_2. 
    \]
    Define $\mu= \frac{1}{2} \mu_1 + \frac{1}{2}\mu_2$. Then, by the Radon-Nikodym theorem, there exist $g_1, g_2 \in L^\infty(\mu)$  (since $\mu_i(E) \le 2\mu(E)$) such that $\mu_i= g_i d\mu$ for $i=1,2$, and 
    \[
    \int f g_1 d\mu -\int f g_2  d\mu =0 \quad \forall f \in \mc{A} \implies \int f (g_1-g_2) d\mu =0\quad \forall f \in \mc{A}.
    \]
  The denseness of  $\mc{A}$  in $L^1(S,\mc{S},\mu)$ implies $g_1-g_2=0$ $\mu$-almost everywhere, which in turn implies that $\mu_1=\mu_2$. Hence, ${\bf B}$ is a separating class. 
    Conversely, assume ${\bf B}$ is a separating class. Recall that it follows from the Hahn-Banach separation theorem that $\mc{A}$ is dense in $L^1(S,\mc{S},\mu)$ if and only if the only continuous linear functional  that vanishes on $\mc{A}$ is the zero functional. Consider an  element in the topological dual space $g \in L^\infty$ such that 
    \[
    \int 1_B(x) g(x) \mu(dx) =0 \forall B \in {\bf B}
    \]
    Then $\eta:=g d\mu$ is a signed measure. We write its Jordan decomposition $\eta = \eta^+-\eta^-$ and the above equation yields that $\eta^+(B)=\eta^{-}(B)$ for all $B \in \bf{B}$ but as $\bf{B}$ is a separating class by assumption this means that $\eta=0$, which implies that $g=0$ $\mu$-almost everywhere.   
\end{proof}

This has important implications for understanding the interplay between the geometry of a space and the determination of measures. Namely, 
any metric space for which ${\bf B}$ is a separating class has the property that any Borel probability measure on it is determined by its values on balls. 
This ball property holds true for a wide variety of metric spaces such as compact Riemannian manifolds and separable Banach spaces \cite{PreissTisier91} and thus for separable Hilbert spaces.

For metric spaces that do not possess the ball property, the indicator of the intersection of the balls fails to be in the closure of the linear span. These intersections must therefore contain extra structural information that cannot be reconstructed or approximated just by knowing the measure on the balls separately. In this case, we can change the measure on the intersection without changing the measure on the individual balls. A famous example of a  compact metric space for which the ball property fails to hold was constructed in \cite{Davies}.

By Kuratowski's embedding, any separable metric space (and thus any compact metric space) can be isometrically embedded into a subset of a separable Banach space. It is important to emphasize that, even though Borel measures on the separable Banach space possess the ball property, this does not imply that the measures on the original space are also determined by their values on balls. 
\begin{Proposition}
    The property that measures are determined by their values on balls is not preserved under isometric embeddings into subsets of Banach spaces. That is, if the ambient space has the ball property, then this property does not pull back to the original metric space.
\end{Proposition}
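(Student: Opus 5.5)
The proposition is an existence statement, so I will prove it with a single counterexample. I will take a separable metric space $(S,d)$ on which Borel probability measures are \emph{not} determined by their values on balls. The canonical choice is the compact metric space constructed by Davies \cite{Davies}: on it there exist two distinct Borel probability measures $\mu_1\neq\mu_2$ with $\mu_1(B_r(x))=\mu_2(B_r(x))$ for all $x\in S$, $r>0$. Equivalently, by \autoref{lemm:ballsspan}, the span of ball indicators fails to be dense in $L^1(S,\mc{S},\mu)$ for some $\mu$.

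Next I embed $S$ isometrically into a separable Banach space. Fix $x_0\in S$ and a dense sequence $(x_n)$. The Kuratowski--Fréchet map $\iota(x)=(d(x,x_n)-d(x_0,x_n))_{n\ge1}$ sends $S$ isometrically into $\ell^\infty$. Since $S$ is separable, the closed linear span of $\iota(S)$ is a separable Banach space $E$. Because $S$ is compact, one could alternatively use $C(S)$, which is separable, via $x\mapsto d(x,\cdot)$. By Preiss--Tišer \cite{PreissTisier91}, $E$ has the ball property: Borel probability measures on $E$ are determined by their values on balls of $E$.

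Then I push the measures forward. Set $\tilde\mu_i=\mu_i\circ\iota^{-1}$ on $E$; these are distinct, since $\iota$ is a homeomorphism onto the compact, hence closed, set $\iota(S)$. I must explain why this does not contradict the ball property of $E$. For a ball $B^E_r(y)$ with centre $y\in E\setminus\iota(S)$, the trace $\iota^{-1}(B^E_r(y))$ is generally not a ball of $S$. Agreement of $\mu_1,\mu_2$ on balls of $S$ only gives agreement of $\tilde\mu_i$ on balls centred in $\iota(S)$. The ambient property, which needs all centres, therefore cannot be pulled back: $\tilde\mu_1\neq\tilde\mu_2$ is consistent with $E$ having the ball property. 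Meanwhile $\iota(S)$, as a metric subspace isometric to $S$, fails the ball property. This proves the proposition.

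The main obstacle is the existence of the counterexample space itself, i.e.\ Davies' construction, which is delicate. I will invoke it as a known result rather than reprove it. The remaining steps are routine: separability of $E$, the isometry, and injectivity of the pushforward. I will also remark that the same argument applies to any separable metric space failing the ball property, because every such space embeds into a separable Banach space.
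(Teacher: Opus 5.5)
Your proof is correct and follows essentially the same route as the paper: Davies' compact counterexample, a Kuratowski-type isometric embedding into a separable Banach space that has the ball property by Preiss--Ti\v{s}er, and the observation that balls of the image are only the traces $B_E(\iota(x),r)\cap\iota(S)$ of ambient balls centred in $\iota(S)$. Your version is more explicit than the paper's: you give the concrete embedding, check separability of the closed span, and check injectivity of the pushforward. You also correctly treat the trace identity as explaining why there is no contradiction, not as the proof itself.
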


Obviously, Davies' counterexample establishes this point; a direct geometric argument is also instructive. Let $(X,d_X)$ be a separable metric space and let $(Y,d_Y)$ be a separable Banach space. Let $\phi: X \to Y$ be an isometric embedding. If we let $Y^\prime= \phi(X)$, then we can easily see that
    \[
    B_{Y^\prime}(\phi(x),r) = B_Y(\phi(x),r) \,\cap\, \phi(X).
    \]
Consequently, unless $\phi(X)=Y$, the 'blindness' of the balls is not fixed by such an isometric embedding. Inherently, conducting inference in the ambient space via balls by no means justifies conclusions regarding the measures on the original space. However, we are in more luck if the metric space is isometric to a subset of a Hilbert space, in which case simpler separating classes become available. To demonstrate how this special Hilbertian case fits into the broader theory, we present the following  characterization of separating classes for metric spaces that satisfy the ball property.

\begin{lemma}\label{lem:cdfslapchar}
    Let $(S,d_{S})$ be a separable metric space and let $\mc{S}$ denote its Borel $\sigma$-algebra. The following are equivalent:
    \begin{enumerate}
        \item  Every Borel probability measure $\nu$ on $S$ is determined by its values on balls. 
        \item The collection 
    $\Gamma_{\varphi}:=\{f_{y,r}(x) = e^{-r d_{S}(x,y)}: y \in S, r >0\}$ forms a separating class. 
    \item The collection $\Gamma_\phi: =\{f_{y,\lambda}(x) = e^{\im \lambda d_{S}(x,y)} : y \in S, \lambda \in \rnum\}$, where $\im=\sqrt{-1}$, forms a separating class.
    \end{enumerate}
\end{lemma}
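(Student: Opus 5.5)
The key observation is that all three conditions concern only the pushforward laws of the distance function. For fixed $y \in S$, write $D_y(x) = d_S(x,y)$. This is a continuous map $S \to [0,\infty)$, so for $\nu \in \mc{P}(S)$ the image measure $\nu_y := \nu \circ D_y^{-1}$ is a Borel probability measure on $[0,\infty)$. Each of the three conditions is a statement about the family $\{\nu_y\}_{y\in S}$:
\begin{itemize}
\item $\nu(B_r(y)) = \nu_y([0,r))$ for all $r>0$;
\item $\int e^{-r d_S(x,y)}\,\nu(dx) = \int_0^\infty e^{-rt}\,\nu_y(dt)$ is the Laplace transform of $\nu_y$ at $r>0$;
\item $\int e^{\im\lambda d_S(x,y)}\,\nu(dx)$ is the characteristic function of $\nu_y$ at $\lambda\in\rnum$.
\end{itemize}
The plan is therefore to show that, for each fixed $y$, the three families of numbers determine one another, and hence all determine $\nu_y$.

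Fix $\mu,\nu\in\mc{P}(S)$ and $y\in S$. We use three classical facts about $[0,\infty)$.
\begin{itemize}
\item The values $\mu_y([0,r))$ for $r>0$ determine $\mu_y$. The intervals $[0,r)$ form a $\pi$-system generating the Borel sets of $[0,\infty)$, since $\{0\} = \bigcap_n [0,1/n)$; apply Dynkin's $\pi$-$\lambda$ theorem.
\item The Laplace transform of a probability measure on $[0,\infty)$, restricted to $r>0$, determines the measure. The value at $r=0$ is $1$ for both measures, so no information is lost.
\item Lévy's uniqueness theorem: the characteristic function determines the measure.
\end{itemize}
It follows that, for each $y$, the statements ``$\mu(B_r(y))=\nu(B_r(y))$ for all $r>0$'', ``$\int f_{y,r}\,d\mu=\int f_{y,r}\,d\nu$ for all $r>0$'' and ``$\int f_{y,\lambda}\,d\mu=\int f_{y,\lambda}\,d\nu$ for all $\lambda\in\rnum$'' are each equivalent to $\mu_y=\nu_y$. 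Quantifying over all $y\in S$, agreement on ${\bf B}$, agreement on $\Gamma_\varphi$ and agreement on $\Gamma_\phi$ are all equivalent to $\mu_y=\nu_y$ for every $y$. Consequently (1), (2) and (3) are each equivalent to the single implication
\[
\bigl[\mu_y=\nu_y \text{ for all } y\in S\bigr] \Longrightarrow \mu=\nu,
\]
which proves the lemma. No separate chain of implications is needed. Alternatively, for (1)$\Rightarrow$(2) one can use the layer-cake formula $\int e^{-rd}\,d\mu=\int_0^1 \mu(e^{-rd}>s)\,ds$ together with $\{e^{-rd(\cdot,y)}>s\}=B_{-\log s/r}(y)$. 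For (2)$\Rightarrow$(1) the pushforward route above is the natural one.

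The only delicate step is the uniqueness of the Laplace transform from its values on $r>0$ alone, rather than on a neighbourhood of the imaginary axis. I would handle it by substitution: set $u=e^{-t}$, which maps $[0,\infty)$ onto $(0,1]$. The values $\int u^r\,d\tilde\mu_y(u)$ at the integers $r=1,2,\dots$ are the moments of a measure on the compact interval $[0,1]$. By Weierstrass approximation, polynomials without constant term, together with the total mass $1$, span a dense subspace of $C[0,1]$. Hence these moments determine $\tilde\mu_y$, and therefore $\mu_y$. Measurability of all the functions involved is immediate because $D_y$ is continuous, and boundedness ensures every integral exists. The rest is bookkeeping.
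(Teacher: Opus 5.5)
Your proposal is correct and takes essentially the paper's approach: the paper's one-line proof also rests, implicitly, on reducing to the law of $d_S(\cdot,y)$ on $[0,\infty)$, together with the fact that this law is determined by its values on the intervals $[0,r)$, by its Laplace transform, and by its characteristic function. You spell out the details the paper leaves implicit, namely the $\pi$--$\lambda$ argument for the intervals $[0,r)$ and the substitution $u=e^{-t}$ with Weierstrass approximation to get uniqueness of the Laplace transform from $r>0$ alone, and these details are sound.
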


{
\begin{proof}[Proof of \autoref{lem:cdfslapchar}]
Straightforward from the one-to-one correspondences between measures on $\rnum$, their characteristic functionals, and Laplace transforms for nonnegative random variables.
\end{proof}

We will show that any separable metric space for which a kernel 
from the Schoenberg class, such as the Laplace or Gaussian kernel, is strictly positive-definite will satisfy the ball property (see \autoref{thm:isomdeterminedbyballs}). 
Taking in \eqref{eq:ipsm} the class $\Gamma$ to be the unit ball in the reproducing kernel Hilbert space $\rkhs_k$ then provides a true metric and takes on a particularly succinct closed form.

\subsubsection{Reproducing kernel Hilbert spaces on separable metric  spaces}

The importance of strictly positive definite kernels as well as the consideration of RKHSs was  solidified and exploited in the probabilistic, time series and harmonic analysis literature  during the previous century \cite{bochner1932,aronszajn1950,parzen1962,parzen1963,berg1984}. 
We shall make use of the following result \cite{aronszajn1950}:
\begin{thm}
Let $\mc{X}$ be a non-empty set and assume $k: \mc{X} \times \mc{X}$ is a positive-definite kernel. Then there exists a unique Hilbert space, denoted by $\rkhs_k$, such that 
\begin{enumerate}[label=\roman*)]
 \item $\rkhs_k$ consists of real-valued functions on $\mc{X}$;
 \item The family of functions $\{K_x: x\in \mc{X}\}$, where $K_x(y) = k(x,y)$ is contained in $\rkhs_k$, i.e., $\{K_x: x\in \mc{X}\} \subset \rkhs_k$;
\item For all $x \in \mc{X}$ and $f \in \rkhs$ the so-called \textit{reproducing property} holds: \[\inprod{f}{K_x}_{\rkhs_k} = f(x). 
\]
\end{enumerate}
\end{thm}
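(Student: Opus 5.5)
The plan is the classical Moore--Aronszajn construction: build a pre-Hilbert space from the kernel sections, complete it, realize the completion as a space of functions on $\mc{X}$, and then prove uniqueness.

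\emph{Pre-Hilbert space.} Let $\rkhs_0=\mathrm{span}\{K_x : x\in\mc{X}\}$. For $f=\sum_{i=1}^n a_iK_{x_i}$ and $g=\sum_{j=1}^m b_jK_{y_j}$ I would define
\[
\inprod{f}{g}_0=\sum_{i=1}^n\sum_{j=1}^m a_ib_j\,k(x_i,y_j).
\]
This expression equals $\sum_j b_j f(y_j)$ and also $\sum_i a_i g(x_i)$. Hence it depends only on the functions $f,g$ and not on the chosen representations, so it is well defined. It is bilinear and symmetric (assuming $k$ is symmetric, as is implicit in positive-definiteness). Positive-definiteness gives $\inprod{f}{f}_0\ge 0$. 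The reproducing identity $\inprod{f}{K_x}_0=f(x)$ holds on $\rkhs_0$ by construction. Cauchy--Schwarz for semi-inner products then gives $|f(x)|\le \|f\|_0\,k(x,x)^{1/2}$. So $\|f\|_0=0$ forces $f\equiv0$, and $\inprod{\cdot}{\cdot}_0$ is a genuine inner product.

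\emph{Completion as functions (the main obstacle).} Take a Cauchy sequence $(f_n)$ in $\rkhs_0$. The pointwise bound above shows that $f_n(x)$ is Cauchy for every $x$, so it converges to a limit $f(x)$. I would define $\rkhs_k$ as the set of all such pointwise limits, with $\|f\|=\lim\|f_n\|_0$.

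The delicate point is well-definedness: two equivalent Cauchy sequences must give the same limit function, and conversely a Cauchy sequence with pointwise limit $0$ must satisfy $\|f_n\|_0\to0$. To prove the second statement, fix $\varepsilon>0$ and choose $N$ with $\|f_n-f_m\|_0<\varepsilon$ for $n,m\ge N$. Write $f_N=\sum_i a_iK_{x_i}$. Then
\[
\|f_n\|_0^2=\inprod{f_n-f_N}{f_n}_0+\sum_i a_i f_n(x_i).
\]
The first term is at most $\varepsilon\sup_m\|f_m\|_0$, which is small since Cauchy sequences are bounded. The second term tends to $0$ as $n\to\infty$ by pointwise convergence. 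Hence $\limsup_n\|f_n\|_0^2$ is bounded by a constant times $\varepsilon$, so $\|f_n\|_0\to0$. With this, the norm and inner product extend consistently and $\rkhs_k$ is a Hilbert space of real-valued functions containing every $K_x$; this gives i) and ii).

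\emph{Reproducing property and uniqueness.} For iii), pass to the limit in $\inprod{f_n}{K_x}_0=f_n(x)$, using continuity of the inner product. For uniqueness, let $\mc{G}$ be any Hilbert space satisfying i)--iii). Then $\rkhs_0\subset\mc{G}$, and the inner products agree on $\rkhs_0$ by the reproducing property. If $g\in\mc{G}$ is orthogonal to $\rkhs_0$, then $g(x)=\inprod{g}{K_x}_{\mc{G}}=0$ for every $x$, so $g=0$ and $\rkhs_0$ is dense in $\mc{G}$. In a Hilbert space with the reproducing property, norm convergence implies pointwise convergence. So every element of $\mc{G}$ is the pointwise limit of a Cauchy sequence from $\rkhs_0$, with the same norm. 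Therefore $\mc{G}=\rkhs_k$ isometrically.
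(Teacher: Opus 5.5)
Your proof is correct: it is the classical Moore--Aronszajn construction, which is what the paper relies on, since the paper gives no proof of its own and simply cites \cite{aronszajn1950}. Two steps are left implicit but are routine: the completeness of $\rkhs_k$ follows by identifying it with the abstract completion of $\rkhs_0$ through your injectivity lemma, and the inclusion $\rkhs_k\subseteq\mc{G}$ in the uniqueness part follows because Cauchy sequences in $\rkhs_0$ converge in the complete space $\mc{G}$ to their pointwise limits.
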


More can be said if $\mc{X}$ has a topology defined on it. Particularly, we will focus on the following refinement of above statement \citep[see e.g.,][]{vakhania1987}. 

\begin{Proposition}\label{prop:contemb} 
Let $(\mc{X},d_{\mc{X}})$ be a separable metric space, where $\mc{X}$ is equipped with its Borel $\sigma$-algebra. Assume that 
$k: \mc{X} \times \mc{X} \to \rnum$ is continuous. Then in addition to the above properties, the RKHS $\rkhs:=\rkhs_k$ consists of continuous functions and is a separable Hilbert space. Furthermore, if the kernel is bounded then the embedding $\rkhs \hookrightarrow (C_b(\mc{X}), \|\cdot\|_\infty)$, where $C_b(\mc{X})$ denotes the space of bounded continuous functions on $\mc{X}$, is continuous.
\end{Proposition}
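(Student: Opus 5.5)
The plan is to build everything from the pre-Hilbert space $\rkhs_0=\text{span}\{K_x: x\in\mc{X}\}$, whose completion is $\rkhs$ by the Moore--Aronszajn theorem above. The single tool used throughout is the reproducing property combined with Cauchy--Schwarz:
\[
|f(x)-f(y)| = |\inprod{f}{K_x-K_y}_{\rkhs}| \le \|f\|_{\rkhs}\,\|K_x-K_y\|_{\rkhs},
\qquad
\|K_x-K_y\|_{\rkhs}^2 = k(x,x)-2k(x,y)+k(y,y).
\]
This identity drives each of the three claims in turn.

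\emph{Continuity of the elements.} Joint continuity of $k$ gives $k(x,x)-2k(x,y)+k(y,y)\to 0$ as $y\to x$. Hence the feature map $x\mapsto K_x$ is continuous from $(\mc{X},d_{\mc{X}})$ into $\rkhs$. The displayed bound then shows that every $f\in\rkhs$ is continuous.

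\emph{Separability.} Let $D\subset\mc{X}$ be a countable dense set. I claim the rational linear span of $\{K_x:x\in D\}$ is dense in $\rkhs$, and I argue via orthogonal complements. If $f\perp K_x$ for all $x\in D$, then $f(x)=0$ on $D$. Since $f$ is continuous, $f\equiv 0$. Thus the closed span of $\{K_x:x\in D\}$ is all of $\rkhs$. Rational coefficients suffice because the real span is approximated by the rational span, so $\rkhs$ is separable. The argument could equally be run by approximating each $K_x$ by $K_{x_n}$ with $x_n\in D$, using continuity of the feature map.

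\emph{Continuous embedding.} Assume $\sup_x k(x,x)\le C<\infty$. Then
\[
|f(x)| = |\inprod{f}{K_x}| \le \|f\|_{\rkhs}\sqrt{k(x,x)} \le \sqrt{C}\,\|f\|_{\rkhs},
\]
so each $f\in\rkhs$ is bounded, and by the first step it is continuous. The inclusion $\rkhs\hookrightarrow C_b(\mc{X})$ is linear and satisfies $\|f\|_\infty\le\sqrt{C}\|f\|_{\rkhs}$, hence it is continuous. It is also injective, since elements of $\rkhs$ are genuine functions.

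The only subtle point is the separability step. There one must use the density of $D$ together with the continuity of the elements, not merely the countability of $D$, so the continuity step has to be established first. Nothing else should present difficulty.
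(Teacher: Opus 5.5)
Your proof is correct and complete. The Cauchy--Schwarz bound $|f(x)-f(y)|\le\|f\|_{\rkhs}\|K_x-K_y\|_{\rkhs}$ gives continuity of every $f\in\rkhs$. The trivial orthogonal complement of $\{K_x:x\in D\}$, for a countable dense $D$, gives separability. The bound $|f(x)|\le\sqrt{\sup_z k(z,z)}\,\|f\|_{\rkhs}$ gives the continuous embedding into $C_b$.

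The paper gives no proof of its own and defers to \cite{vakhania1987}. Your argument is the standard one, and your separability step is the same orthogonal-complement argument the paper uses in the proof of \autoref{prop:claim}.
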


The evaluation functional $\pi_x:f \mapsto f(x)$ is an element of the topological dual of $\rkhs$ for every $x \in \mc{X}$. By the Riesz-Fr{\'e}chet theorem, there exists a unique element $K_x \in \rkhs$ such that 
\[
\pi_x(f) = \inprod{f}{K_x}_{\rkhs}.
\]
Furthermore, taking $f=K_y$ gives the well-known identity $\pi_x(K_y) = \inprod{K_x}{K_y}_{\rkhs}=k(x,y)$. 
For a given probability measure $\mu$ on $\mc{B}(\mc{X})$, define the integration functional 
   $ I_\mu : \rkhs \to \rnum$, $f \mapsto  \int_{\mc{X}} f(x)\mu(dx)$, 
which is an  element of the topological dual space $\rkhs^\prime$ under the conditions of the above proposition. Hence, there exists a unique element $m_{\mu} \in \rkhs$, known as the \textit{mean functional of $\mu$}, such that for every $f \in \rkhs$, $
 I_\mu(f)=\inprod{f}{m_{\mu}}$.
The reader can immediately verify that \[
m_{\mu}=\int_{\mc{X}} k(\cdot,y) \mu(dy),\tageq \label{eq:pettis}\] 
which is generally also known as the Pettis integral \cite{pettis1938}, and which coincides with the Bochner integral under the above conditions. We remark that minimal conditions necessary to ensure well-definedness of the Pettis integral are that the kernel is Borel-measurable and is square root-integrable with respect to the specified measure. For the purpose of this paper, it is without loss of generality to consider continuous kernels. 

Parzen referred to \eqref{eq:pettis} as the \textit{mean functional} and pioneered the application of  RKHS in probability and statistics, particularly to define probability density functionals and to uniquely identify probability measures via the injectivity of the mapping \cite{parzen1962,parzen1963}. It is crucial to recognize that what is currently called \textit{mean embedding} in the machine learning literature was  rigorously established by Parzen as the mean functional. Indeed, Parzen's work predates the literature on "characteristic kernels" by over four decades. The requirement that a kernel generates an RKHS that is dense in a convergence-determining class such as $(C_0(\mc{X}), \|\cdot\|_\infty)$ where $\mc{X}$ is a locally compact metric space, has been revived under the label of universality in the machine learning literature for specific kernels on non-compact domains (see e.g., \cite{gretton2012,sriperumbudur2010,simongab2023}).  
 Generally, however, the metric space is too complex to support an RKHS that induces a convergence-determining class under the supremum norm, a limitation formalized by the following result.
\begin{Proposition}\label{prop:claim}
    If $(\mc{X},d_{\mc{X}})$ is non-compact but Polish, then no RKHS can exist that is dense in $(C_b(\mc{X}),\|\cdot\|_{\infty})$.
\end{Proposition}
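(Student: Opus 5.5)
The plan is to reduce the statement to a separability obstruction. If $\rkhs$ were dense in $(C_b(\mc{X}),\|\cdot\|_\infty)$, then $C_b(\mc{X})$ would be separable in the supremum norm. We then show that $C_b(\mc{X})$ is never separable when $\mc{X}$ is a non-compact metric space. Polishness is only used through separability of $\mc{X}$.

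\emph{Step 1 (separability of the image).} For density in $C_b(\mc{X})$ to make sense we need $\rkhs \subset C_b(\mc{X})$. By \autoref{prop:contemb}, with a continuous kernel $\rkhs$ is a separable Hilbert space.
\begin{itemize}
\item If $k$ is bounded, \autoref{prop:contemb} gives that the inclusion $\rkhs\hookrightarrow C_b(\mc{X})$ is continuous.
\item Otherwise, we still have $\rkhs\subset C_b(\mc{X})$ by assumption. Point evaluations are continuous on both $\rkhs$ and $C_b(\mc{X})$, so the inclusion has closed graph. The closed graph theorem then gives continuity.
\end{itemize}
In either case, the continuous image of a countable dense subset of $\rkhs$ is a countable set that is $\|\cdot\|_\infty$-dense in $\rkhs$. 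Hence density of $\rkhs$ in $C_b(\mc{X})$ would make $C_b(\mc{X})$ separable.

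\emph{Step 2 (non-separability of $C_b$).} Since $\mc{X}$ is a non-compact metric space, it is not sequentially compact. So there is a sequence $(x_n)$ with no convergent subsequence, and we may take its points distinct.
\begin{itemize}
\item The set $\{x_n\}$ is closed and discrete. Hence $\delta_n:=d(x_n,\{x_m:m\neq n\})>0$.
\item Choose radii $0<r_n<\min(\delta_n/2,1/n)$. The balls $B_{r_n}(x_n)$ are then pairwise disjoint, because $r_n+r_m<d(x_n,x_m)$.
\item The family of balls is locally finite. Suppose some $x$ had every neighbourhood meeting infinitely many of them. Then we could pick $y_k\in B_{r_{n_k}}(x_{n_k})$ with $y_k\to x$ and $r_{n_k}\to 0$. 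This forces $x_{n_k}\to x$, a contradiction.
\item Define the bumps $b_n(x)=\max(0,1-d(x,x_n)/r_n)$. For each $A\subseteq\nnum$ set $f_A=\sum_{n\in A} b_n$. By local finiteness and disjointness, $f_A$ is continuous with values in $[0,1]$.
\item For $A\neq A'$, evaluating at some $x_n$ with $n\in A\triangle A'$ gives $\|f_A-f_{A'}\|_\infty=1$.
\end{itemize}
This yields an uncountable $1$-separated family in $C_b(\mc{X})$. The open balls of radius $1/2$ around its members are disjoint, and each must contain a point of any dense set, so no countable dense subset exists. This contradicts Step 1.

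The main obstacle is the construction in Step 2: choosing the radii so that the bumps have disjoint supports and form a locally finite family, which is what makes the infinite sums $f_A$ continuous. The closed-discrete property of a sequence with no convergent subsequence, together with $r_n\to0$, handles this. A secondary subtlety is interpreting ``dense'' when $k$ is unbounded, which the closed graph argument in Step 1 settles.
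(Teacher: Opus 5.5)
Your overall route is the paper's: a separable RKHS has a separable $\|\cdot\|_\infty$-closure, while $C_b(\mathcal X)$ is non-separable for a non-compact metric $\mathcal X$. Step 2 is correct, and it is more complete than the paper, which simply cites ``$C_b(\mathcal X)$ separable iff $\mathcal X$ compact''. Your locally finite family of disjoint bumps gives a clean self-contained proof of that fact. Your closed-graph argument for continuity of $\mathcal H\hookrightarrow C_b(\mathcal X)$ is also correct. It makes explicit something the paper leaves implicit: separability of $\mathcal H$ in its own norm only transfers to the sup norm through this continuity.

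The gap is in how you obtain separability of $\mathcal H$. You invoke \autoref{prop:contemb}, which requires $k$ to be jointly continuous. The proposition, however, concerns an arbitrary RKHS, and $\mathcal H\subset C_b(\mathcal X)$ does not force joint continuity of $k$. It only gives weak continuity of $x\mapsto K_x$, whereas joint continuity of $k$ is equivalent to norm continuity.

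Here is a counterexample on $\mathbb R$. Take continuous bumps $\phi_n$ with $\phi_n(1/n)=1$, supported in pairwise disjoint intervals around $1/n$ that avoid $0$, and set $k(x,y)=\sum_n\phi_n(x)\phi_n(y)$. Then $\mathcal H=\{\sum_n w_n\phi_n: w\in\ell^2\}\subset C_b(\mathbb R)$; continuity at $0$ holds because $w_n\to0$. Yet $k(1/n,1/n)=1\not\to 0=k(0,0)$. So, as written, Step 1 does not cover all RKHSs contained in $C_b(\mathcal X)$.

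The missing step is the one the paper supplies. Every $f\in\mathcal H$ is continuous. So if $f\perp K_x$ for all $x$ in a countable dense $D\subset\mathcal X$, then $f|_D=0$, hence $f\equiv 0$. Thus $\overline{\operatorname{span}}\{K_x:x\in D\}=\mathcal H$, and $\mathcal H$ is separable with no continuity assumption on $k$. With this replacement your argument is complete.

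Incidentally, your ``unbounded $k$'' case is vacuous. Continuity of the inclusion gives $k(x,x)=K_x(x)\le C\|K_x\|_{\mathcal H}=C\sqrt{k(x,x)}$, so $k$ is bounded; the paper obtains the same bound from the uniform boundedness principle.
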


\begin{proof} Assume first that the RKHS is separable. In a metric space, the closure of a separable set is separable. However, $(C_b(\mc{X}),\|\cdot\|_\infty)$ is separable if and only if $\mc{X}$ is compact. Hence, no separable RKHS on a non-compact Polish space can be dense in $(C_b(\mc{X}),\|\cdot\|_\infty)$. It remains to show that any RKHS satisfying $\rkhs \subset C_b(\mc{X})$ must be separable. The fact that $\rkhs \subset C_b(\mc{X})$ implies that for any $f \in \rkhs$, the set $\{\delta_x(f): x \in \mc{X}\} \subset \rnum$ is uniformly bounded by $\|f\|_\infty$. The uniform boundedness principle therefore implies that the kernel must be bounded. Furthermore, $\rkhs \subset C_b(\mc{X})$ implies that the functions $y\mapsto k(x,y)$ are continuous. Since $\mc{X}$ is Polish, it contains a countable dense subset, say $D$. Any $f \in \rkhs$ that evaluates to zero on this dense subset must be identically zero due to continuity. Therefore, the orthogonal complement of the set $\{k_x: x \in D\}$ is trivial.
The result now follows. \end{proof}

With these things in mind, let us revisit the integral probability semi-metric for $\Gamma=\{f \in \rkhs: \|f\|_{\rkhs} \le 1\}$. Consider two well-defined mean functionals $m_{\mu},  m_\nu \in \rkhs$ induced by the Borel probability measures $\mu$ and $\nu$, respectively. 
Standard calculations yield that the square of the integral probability (semi-)metric on the unit ball of  $\rkhs$ is simply given by 
\[
\gamma^2_{\mathrm{Ball}_{\rkhs}}(\mu,\nu)= \|m_{\mu}-m_{\nu}\|^2_{\rkhs} \tageq \label{eq:gamballrkhs}
\]

Hence, for $\gamma_{\Gamma}$ to be a well-defined metric on the space of Borel probability measures $\mc{P}(S)$, we require that 
\[
 \|m_{\mu}-m_{\nu}\|^2_{\rkhs} =0 \quad \implies \quad \mu =\nu.
\]
For this to hold, a continuous kernel must be strictly positive definite \citep{parzen1962,schwartz1966,berg1984},  
since
\[ \|m_{\mu}-m_{\nu}\|^2_{\rkhs} = \int \int k(x,y)\eta(dx) \eta(dy) \tageq \label{eq:injememb}\]
where $\eta=\mu-\nu$ is a signed Borel measure. 

Having a (strictly) positive definite kernel induced by the metric imposes a constraint on the metric geometry. This restricts in turn the applicability of RKHS-based inference on general metric spaces.

\subsubsection{
Isometric embeddings into a Hilbert space}

Almost a century ago, Schoenberg proved in his highly impactful works \cite{schoenberg1938,schoenberg1938monotone} the key results to establish that one cannot take positive definiteness for granted if one wants to avoid distorting the metric geometry. Schoenberg characterized all continuous positive-definite functions on a Hilbert space that only depend on the norm:
\begin{thm}\label{thm:schoenbergrep}
    Let $H$ be an infinite-dimensional Hilbert space, and let $f: \rnum_+ \to \cnum$ be a continuous function with $f(0)=1$. Then the functional $\chi(h) = f(\|h\|)$, $h \in H$, is positive definite if and only if it can be represented as
    \[
    f(t) = \int_0^\infty e^{-s\, t^2} d\nu(s) \tageq \label{eq:schoenbergke}
    \]
    where $\nu$ is a finite Borel measure on $[0,\infty)$.
\end{thm}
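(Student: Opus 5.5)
This is Schoenberg's theorem. I would prove the two directions separately, with sufficiency the easy one. Throughout I read $f$ as real-valued: positive definiteness forces $\chi(-h)=\overline{\chi(h)}$, and $\chi(-h)=\chi(h)$ because $\chi$ depends only on $\|h\|$.

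\emph{Sufficiency.} Suppose \eqref{eq:schoenbergke} holds, fix $h_1,\dots,h_n\in H$ and $c\in\cnum^n$, and write
\[
\sum_{j,l} c_j\bar c_l f(\|h_j-h_l\|)=\int_0^\infty \sum_{j,l} c_j\bar c_l e^{-s\|h_j-h_l\|^2}\,d\nu(s).
\]
It then suffices to show that each Gaussian $h\mapsto e^{-s\|h\|^2}$ is positive definite. The points $h_j$ span a finite-dimensional subspace isometric to $\rnum^m$. On $\rnum^m$, the function $e^{-s|x|^2}$ is the Fourier transform of a Gaussian probability measure, so it is positive definite by Bochner's theorem. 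Integrating against the nonnegative measure $\nu$ preserves positive definiteness.

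\emph{Necessity.} Assume $\chi$ is positive definite. Take an orthonormal sequence $(e_k)$ in $H$; infinite dimension is used exactly here. For each $n$, restrict $\chi$ to $\mathrm{span}(e_1,\dots,e_n)\cong\rnum^n$. The function $x\mapsto f(|x|)$ is continuous and positive definite on $\rnum^n$ with $f(0)=1$. By Bochner's theorem it is the characteristic function of a probability measure $P_n$ on $\rnum^n$, and $P_n$ is rotation invariant because $f(|x|)$ is. Writing $P_n$ in polar form gives
\[
f(t)=\int_0^\infty \Omega_n(rt)\,d\mu_n(r),
\]
where $\Omega_n$ is the characteristic function of the uniform distribution on $\mb{S}^{n-1}$ and $\mu_n$ is a probability measure on $[0,\infty)$. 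Explicitly, $\Omega_n(u)=\Gamma(n/2)(2/u)^{(n-2)/2}J_{(n-2)/2}(u)$.

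The key classical asymptotic is $\Omega_n(\sqrt{n}\,u)\to e^{-u^2/2}$ uniformly on $[0,\infty)$. This is the Poincaré/Maxwell phenomenon: a uniform point on $\sqrt{n}\,\mb{S}^{n-1}$ has approximately Gaussian first coordinate. The rescaled measures $\tilde\mu_n(A)=\mu_n(\sqrt{n}A)$ satisfy
\[
f(t)=\int \Omega_n(\sqrt{n}\,rt)\,d\tilde\mu_n(r).
\]
They are probability measures on the compactification $[0,\infty]$, so by Helly selection a subsequence converges vaguely to some $\tilde\mu$ on $[0,\infty]$. Passing to the limit using the uniform convergence gives, for $t>0$,
\[
f(t)=\int_{[0,\infty)} e^{-r^2t^2/2}\,d\tilde\mu(r)+0\cdot\tilde\mu(\{\infty\}),
\]
since $e^{-r^2t^2/2}$ extends continuously to $[0,\infty]$ with value $0$ at $\infty$ when $t>0$. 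Setting $s=r^2/2$ and letting $\nu$ be the image of $\tilde\mu|_{[0,\infty)}$ produces the representation \eqref{eq:schoenbergke} for $t>0$. Continuity of $f$ at $0$ together with monotone convergence then forces $\nu([0,\infty))=f(0)=1$, so no mass escapes to infinity and the formula also holds at $t=0$.

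The main obstacle is the necessity direction, and specifically two points in it: establishing the uniform convergence $\Omega_n(\sqrt{n}u)\to e^{-u^2/2}$, and the tightness argument that rules out mass escaping to infinity. For the first, I would avoid Bessel asymptotics and use the probabilistic representation. Write $\Omega_n(\sqrt{n}u)=\E e^{\im u\sqrt{n}\,\xi_1}$, where $\xi$ is uniform on the sphere, so $\xi=g/|g|$ with $g$ standard Gaussian in $\rnum^n$. Then $\sqrt{n}\,\xi_1=g_1\sqrt{n}/|g|\to g_1$ almost surely by the law of large numbers, giving pointwise convergence. Uniformity in $u$ follows because both functions tend to $0$ as $u\to\infty$ uniformly in $n$ (equicontinuity plus decay), or alternatively one can work only with pointwise convergence, using dominated convergence on compact sets of $r$ and tightness to control the rest.
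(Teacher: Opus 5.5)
The paper gives no proof of this statement; it is quoted from Schoenberg (1938). Your outline is essentially Schoenberg's original argument: Bochner on each $\rnum^n$, the representation through $\Omega_n$, the limit $\Omega_n(\sqrt{n}\,u)\to e^{-u^2/2}$, Helly selection on $[0,\infty]$, and continuity of $f$ at $0$ to exclude mass at $\infty$. The sufficiency direction is complete as written.

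The gap is in the necessity direction, at the step you yourself single out. In the Helly step you need $\int (\Omega_n(\sqrt{n}\,rt)-e^{-r^2t^2/2})\,d\tilde\mu_n(r)\to0$. You have no a priori control on where $\tilde\mu_n$ puts its mass, so you need $\sup_{u\ge0}|\Omega_n(\sqrt{n}\,u)-e^{-u^2/2}|\to0$ on the whole half-line. Your law-of-large-numbers argument plus L\'evy's continuity theorem only gives uniformity on compacts.

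``Equicontinuity plus decay'' does not supply the rest. Equicontinuity says nothing about $u\to\infty$, and the fact that each fixed $\Omega_n$ decays does not make the decay uniform in $n$, which is precisely what is being claimed. The fallback (pointwise convergence plus tightness) is circular as stated, because tightness of $(\tilde\mu_n)$ is not known a priori. In your main line, absence of mass at $\infty$ is derived only after passing to the limit, and that passage already uses the uniform convergence.

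Either route closes easily.

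\emph{Uniform convergence.} For $n\ge2$ the law of $\sqrt{n}\,\xi_1$ has density $p_n(x)\propto(1-x^2/n)_+^{(n-3)/2}$. This converges pointwise to the standard normal density $p_\infty$; the normalising constants converge by dominated convergence, since the unnormalised densities are bounded by $e^{-x^2/8}$ for $n\ge4$. Scheff\'e's lemma then gives $\|p_n-p_\infty\|_{L^1}\to0$, and hence $\sup_{u\in\rnum}|\Omega_n(\sqrt{n}\,u)-e^{-u^2/2}|\le\|p_n-p_\infty\|_{L^1}\to0$.

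\emph{Direct tightness.} Average $1-f(t)=\int\E[1-\cos(rt\sqrt{n}\,\xi_1)]\,d\tilde\mu_n(r)$ over $t\in[0,\delta]$ and use $1-\sin z/z\ge1/2$ for $|z|\ge2$. This yields $\tilde\mu_n([4/\delta,\infty))\le\frac{2}{c\,\delta}\int_0^\delta(1-f(t))\,dt$ with $c=\inf_n\pr(\sqrt{n}\,|\xi_1|\ge1/2)>0$. By continuity of $f$ at $0$, the right-hand side tends to $0$ as $\delta\downarrow0$ uniformly in $n$. With tightness in hand, convergence on compacts suffices.

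With either addition your proof is complete.
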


This gives rise to a class of continuous  kernels of the form $k(x,y) = f(\|x-y\|)$, where $f$ is as in \eqref{eq:schoenbergke}. In essence, Schoenberg's theorem implies that every such kernel is positive definite if and only if it is a scale mixture of Gaussians with respect to a non-negative finite Borel measure on $[0,\infty)$. This framework encompasses the Matérn kernel family, of which the Gaussian kernel is a foundational member, as well as the Laplace kernel $k(x,y)=e^{-\alpha\|x-y\|}$. These kernels satisfy $\text{supp}(\nu) \cap (0,\infty) \neq \emptyset$ and are therefore \textit{strictly} positive definite, ensuring that the injectivity condition holds on the space of Borel probability measures on $H$.

On a Hilbert space, an RKHS induced by a kernel in the Schoenberg class which satisfies $\text{supp}(\nu) \cap (0,\infty) \neq \emptyset$ is dense in $(C_0(H), \|\cdot\|_\infty)$ if $H$ is locally compact (i.e., finite-dimensional). If $H$ is infinite-dimensional, then the same density property holds true if we instead equip the space $C_0(H)$ with the compact-open topology. Furthermore, such Schoenberg kernels 
are dense in $L^1(H,\mc{B}(H),\mu)$ for any $\mu \in \mc{P}(H)$.

For metric spaces that can be isometrically embedded into a Hilbert space, this separating property carries over to strictly positive definite kernels defined by $k(x,y)=f(d_{\mc{X}}(x,y))$, $x, y \in \mc{X}$, where $f$ is as in \eqref{eq:schoenbergke}. Schoenberg identified these metric spaces via the properties of the metric:

\begin{thm}\label{thm:isombedSch}
    Let $(\mc{X},d_\mc{X})$ be a metric space. Then $(\mc{X},d_\mc{X})$ is isometric to a subset of a Hilbert space if and only if $d^2_{\mc{X}}$ is a conditionally negative definite function. 
\end{thm}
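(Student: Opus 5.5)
The plan is to prove the two directions separately. The metric is assumed symmetric with $d(x,x)=0$, and conditional negative definiteness of $d^2$ means
\[
\sum_{i,j=1}^n c_i c_j\, d^2(x_i,x_j)\le 0
\]
for all $x_1,\dots,x_n\in\mc{X}$ and all real $c_1,\dots,c_n$ with $\sum_i c_i=0$.

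\emph{Necessity} is a direct computation. Suppose $\phi:\mc{X}\to H$ is an isometry into a Hilbert space. Then
\[
d^2(x_i,x_j)=\|\phi(x_i)\|^2+\|\phi(x_j)\|^2-2\inprod{\phi(x_i)}{\phi(x_j)}.
\]
Weighting by $c_ic_j$ and summing, the two norm terms vanish because $\sum_i c_i=0$. What remains is $-2\|\sum_i c_i\phi(x_i)\|^2\le 0$, which is the required inequality.

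\emph{Sufficiency} goes by building a positive semidefinite kernel and taking its RKHS. Fix a base point $x_0\in\mc{X}$ and define
\[
k(x,y)=\tfrac12\bigl(d^2(x,x_0)+d^2(y,x_0)-d^2(x,y)\bigr).
\]
To check that $k$ is positive semidefinite, take arbitrary points $x_1,\dots,x_n$ and arbitrary real coefficients $c_1,\dots,c_n$. Augment the configuration with the point $x_0$ and coefficient $c_0=-\sum_{i\ge1}c_i$, so the augmented coefficients sum to zero. Expanding the conditional negative definiteness inequality for this augmented family and using $d(x_0,x_0)=0$ gives
\[
\sum_{i,j=1}^n c_ic_j\,k(x_i,x_j)=-\tfrac12\sum_{i,j=0}^n c_ic_j\,d^2(x_i,x_j)\ge 0.
\]
This bookkeeping identity is the main step to get right. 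One has to track the cross terms involving $x_0$ carefully: the terms $\sum_{i,j\ge1}c_ic_j d^2(x_i,x_0)$ must be matched against $2c_0\sum_{j\ge1} c_j d^2(x_0,x_j)$, and with $c_0=-\sum_{j\ge1} c_j$ these cancel exactly.

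With $k$ positive semidefinite, the Moore--Aronszajn theorem stated above supplies a Hilbert space $\rkhs_k$ containing the functions $K_x$. Define $\phi(x)=K_x$. The reproducing property then gives
\[
\|K_x-K_y\|^2=k(x,x)+k(y,y)-2k(x,y).
\]
Substituting the definition of $k$, and using $k(x,x)=d^2(x,x_0)$, this simplifies to $d^2(x,y)$. So $\phi$ is an isometry onto the subset $\{K_x\}$ of a Hilbert space, which completes the argument.

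No separability is needed here; if one wants a separable Hilbert space when $\mc{X}$ is separable, one takes the closed span of the image.
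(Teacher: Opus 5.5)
Your proof is correct in both directions. The step you flag checks out: with $c_0=-\sum_{i\ge1}c_i$,
$$\sum_{i,j\ge1}c_ic_jk(x_i,x_j)=\Bigl(\sum_{i\ge1}c_i\Bigr)\sum_{j\ge1}c_jd^2(x_j,x_0)-\tfrac12\sum_{i,j\ge1}c_ic_jd^2(x_i,x_j)=-\tfrac12\sum_{i,j=0}^n c_ic_jd^2(x_i,x_j).$$
Moreover $k(x,x)=d^2(x,x_0)$ gives $\|K_x-K_y\|_{\rkhs_k}=d(x,y)$.

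The paper gives no proof of this statement; it quotes it as a classical result of Schoenberg. So the comparison is with that classical route, not with an argument in the text. Schoenberg works with finite configurations: Euclidean embeddability of $\{x_0,\dots,x_n\}$ is equivalent to positive semidefiniteness of the same form $\tfrac12\bigl(d^2(x_0,x_i)+d^2(x_0,x_j)-d^2(x_i,x_j)\bigr)$. He then passes to the whole space through a countable dense subset and a continuity argument, which is why his formulations are for separable spaces. In 1938 he also recast the condition as positive definiteness of $e^{-\alpha d^2}$ for all $\alpha>0$.

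Your Moore--Aronszajn construction replaces that finite-to-infinite passage with a single completion. This buys three things. It proves the theorem in exactly the generality the paper states: an arbitrary metric space and a Hilbert space of arbitrary dimension. It uses only the reproducing-kernel theorem the paper already quotes. And it recovers separability afterwards, as you note.

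What it does not deliver is the exponential form. That is the statement that $d^2$ is conditionally negative definite iff $e^{-\alpha d^2}$ is positive definite for all $\alpha>0$, together with the variant for $e^{-rd}$. The paper relies on this in the proof of \autoref{thm:isomdeterminedbyballs}. It is a separate classical lemma, however, not part of the statement you were asked to prove.
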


For any hermitian function $\tau(s,t)=\overline{\tau(t,s)}$, $s,t \in \mc{X}$, where $\mc{X}$ is non-empty, the function $e^{-\alpha \tau}$ is positive definite for any $\alpha>0$ if and only if the function $\tau$ is conditionally negative definite. Furthermore if the space $\mc{X}$ is a topological group, then a nonnegative function $(s,t) \mapsto \rho(s-t)$ that is conditionally negative definite, has the property that $\rho^\alpha$ is also conditionally negative definite for every real $0 \le \alpha \le 1$. Hence, the work of Schoenberg allows one to conclude that if a  metric space satisfies \autoref{thm:isombedSch}, then strict positive definiteness of Schoenberg kernels satisfying $\text{supp}(\nu) \cap (0,\infty) \neq \emptyset$,   is inherited.

Unfortunately, many interesting metric spaces are not isometric to a subset of a Hilbert space. In particular, this holds for any Banach space that is not a Hilbert space \cite{JordanNeumann1935}, in which case the representation \eqref{eq:schoenbergke} fails. More generally, any metric space that can be isometrically embedded into a Hilbert space must satisfy the parallelogram law, which structurally restricts it to flat geometries. In other words, spaces that have either intrinsic positive curvature (like spheres) or negative curvature (hyperbolic spaces) are not compatible with the rigid geometry of a Hilbert space. This geometric constraint directly affects inference methods that rely on embedding measures into Hilbert spaces. For example, while Gaussian embeddings into an RKHS elegantly exploit the dichotomy of Gaussian measures to establish a highly powerful ‘separation of measure’ phenomenon as recently shown in \cite{sanwagpan26}, such approaches remain inherently vulnerable to these metric distortions if \autoref{thm:isombedSch} fails to hold. 
Hypothesis tests for intrinsic geometric features such as zero-curvature as in \cite{songmuller26} could serve as a highly sophisticated diagnostic tool to determine whether a Hilbertian embedding is reasonable. However, such elegant tools are currently only applicable to geodesic spaces for which central dispersion measures such as Fr{\'e}chet variances are well-defined. 
Complementing these specialized geometric approaches, the framework put forward in this paper encompasses these settings by operating directly via the separating class property of balls. While the separating class property of balls also does not hold universally for all metric spaces (e.g., \cite{Davies}), the framework put forward in this paper sidesteps Hilbertian constraints or the need for existence and computation of central dispersion measures, wherever the underlying ball property is satisfied.

\begin{example}
Consider a circle $\mb{S}^1_r$ with radius $r$. The arc metric $d_A$ and Euclidean metric $d_E$ are related in the following way
  \begin{align*}
      d_E(x,y) = 2r \sin\big(\frac{d_A(x,y)}{2r}\big),
  \end{align*}
  where $x,y \in \mb{S}^1_r$. Note that this is a contraction since $d_E(x,y) \le d_A(x,y)$. 
  The derivative  $\frac{d f(s)}{d s} = \cos(\frac{s}{2r})$ of $f(s) = 2r \sin(\frac{s}{2r})$ can be seen as the rate of change of the Euclidean metric with respect to the actual movement along the circle. If $s \to 0$ then it is close to 1. If $s$ gets larger, the derivative drops to zero where the points are antipodal, i.e., where $s=r\pi$. 
  Gromov \cite{Gromov} showed that the distortion of embedding a closed loop into Euclidean space is at least $\pi/2$. Hence, the circle attains Gromov's lower bound and represents the least distorted configuration possible for a closed loop embedded in a Euclidean space.  However, if probability mass is concentrated around the antipodal points, a statistic based on the Euclidean embedding will be flawed in distinguishing the difference between antipodal versus near antipodal points. 

\end{example}

If \autoref{thm:isombedSch} fails, and a Hilbert space embedding is no longer compatible with the geometric structure,  it is of interest to construct a more general separating class of functions. In particular, we note the following: \begin{thm}\label{thm:isomdeterminedbyballs}
      The collection of complete, separable metric spaces that are isometric to a subset of a Hilbert space is a strict subclass of the collection of metric spaces for which the Borel  probability  measures are determined by their values on balls. 
\end{thm}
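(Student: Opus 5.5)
The proof has two parts: an inclusion and a strictness claim.

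\emph{Inclusion.} Let $(\mc{X},d_{\mc{X}})$ be complete, separable and isometric to a subset of a Hilbert space. By \autoref{thm:isombedSch}, $d^2_{\mc{X}}$ is conditionally negative definite. Two Schoenberg facts recalled above then apply: $d_{\mc{X}}=(d^2_{\mc{X}})^{1/2}$ is again conditionally negative definite, and so $e^{-r d_{\mc{X}}}$ is positive definite for every $r>0$. In fact I would argue directly with the Laplace kernel $k_r(x,y)=e^{-r d_{\mc{X}}(x,y)}$, which belongs to the Schoenberg class with $\mathrm{supp}(\nu)\cap(0,\infty)\neq\emptyset$. Its strict positive definiteness is inherited from the Hilbert space $H$ through the isometry $\phi$, because $k_r(x,y)=f(\|\phi(x)-\phi(y)\|_H)$.

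Now take $\mu_1,\mu_2\in\mc{P}(\mc{X})$ that agree on all balls. Each function $x\mapsto e^{-r d(x,y)}$ is a monotone function of $d(x,y)$, so its integral is determined by the values $\mu_i(B_s(y))$ via the layer-cake formula
\[
\int e^{-r d(x,y)}\,\mu_i(dx)=\int_0^\infty r e^{-rs}\,\mu_i(B_s(y))\,ds .
\]
Hence $\int f\,d\mu_1=\int f\,d\mu_2$ for every $f\in\Gamma_\varphi$, which is exactly the equivalence (i)$\Leftrightarrow$(ii) of \autoref{lem:cdfslapchar}. Taking $f=k_r(\cdot,y)$ gives $m_{\mu_1}=m_{\mu_2}$ pointwise, so by \eqref{eq:injememb} $\|m_{\mu_1}-m_{\mu_2}\|_{\rkhs}=0$. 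Strict positive definiteness on $\mc{P}(\mc{X})$ (injectivity of the mean embedding) then yields $\mu_1=\mu_2$.

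The key step is that injectivity for \emph{all} Borel probability measures, not just finitely supported ones. Here I would use the paper's claim that Schoenberg kernels are characteristic on $H$. Push $\mu_i$ forward to $\phi_*\mu_i$ on $H$; these are Borel because $\phi$ is continuous. We have $\|m_{\phi_*\mu_1}-m_{\phi_*\mu_2}\|=0$, so $\phi_*\mu_1=\phi_*\mu_2$ on $H$. Completeness and separability make $\phi(\mc{X})$ a closed, hence Borel, subset and $\phi$ a Borel isomorphism onto it, so this pulls back to $\mu_1=\mu_2$. This pull-back is where the completeness hypothesis enters, and it is the step I expect to need the most care: the pushforward measures agree on all Borel sets of $H$, not merely on balls, so the "blindness" issue from the earlier Proposition does not arise.

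\emph{Strictness.} I would exhibit a complete separable metric space with the ball property that does not embed isometrically into Hilbert space. The simplest choice is a compact Riemannian manifold such as $\mb{S}^2$ with its geodesic metric; it has the ball property by \cite{PreissTisier91}. It is not Hilbert-embeddable: a Hilbert-embedded geodesic space would have to be a convex subset, hence flat, whereas concretely $d^2$ is not conditionally negative definite, as one can check on four points, the north pole, the south pole, and two equatorial points. Alternatively, take a separable non-Hilbert Banach space such as $\ell^1$ or $(\rnum^2,\|\cdot\|_\infty)$. It has the ball property by \cite{PreissTisier91}, but it fails the parallelogram law, so it is not isometric to a subset of a Hilbert space by \cite{JordanNeumann1935}; it is complete and separable. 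The explicit four-point check for $(\rnum^2,\|\cdot\|_\infty)$ is routine: take the points $(\pm1,0)$ and $(0,\pm1)$, where the parallelogram identity for Euclidean-embeddable configurations fails.
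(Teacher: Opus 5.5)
Your proposal is correct and follows essentially the paper's route. The Laplace kernel $e^{-r d_{\mc{X}}}$ is characteristic as the pull-back of a Schoenberg kernel through the isometry, so $\Gamma_{\varphi}$ is separating, and the ball property then follows from (ii)$\Rightarrow$(i) of \autoref{lem:cdfslapchar}; your layer-cake formula is exactly that step. Your pushforward argument spells out the passage from strict positive definiteness to injectivity on all of $\mc{P}(\mc{X})$, which the paper leaves implicit. Your strictness examples ($\mb{S}^2$, and non-Hilbert Banach spaces via \cite{PreissTisier91} and \cite{JordanNeumann1935}) are the ones the paper's surrounding discussion relies on.

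One small correction: completeness is not what makes the pull-back work. An isometry is a topological embedding, so $\mc{B}(\mc{X})=\phi^{-1}(\mc{B}(H))$, and $\phi_*\mu_1=\phi_*\mu_2$ already forces $\mu_1=\mu_2$ without needing $\phi(\mc{X})$ to be closed.
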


\begin{proof}

If we have an isometric embedding $\phi :\mc{X} \to H$, then for any $r >0$ and any $y \in \mc{X}$, we can define the kernel function \[k_{y,r}(x) = e^{-r d_H(\phi(x), \phi(y))} = e^{-r d_{\mc{X}}(x,y)}\] for any $r >0$. By 
\autoref{thm:isombedSch} and the preceding discussion, we know that this kernel is strictly positive definite and thus $\Gamma_{\varphi}$ forms a separating class. The statement now follows immediately from the equivalence established in  \autoref{lem:cdfslapchar}.

\end{proof}

\section{Foundations for measure-theoretic inference} \label{sec:sec3}

 In practice, the researcher lacks full information on the measures $\mu$ and $\nu$, but can construct empirical versions from realizations of a process that is (at least locally) characterized via an ergodic measure-preserving shift transformation. Depending on the properties of the space, a class $\Gamma$ that separates measures may be so rich that supremum-based estimation over it becomes unfeasible.  In such cases, $L^p$-based discrepancy measures may be preferred. If a natural reference measure is not a priori available, one may require specification and estimation of an asymptotically mean stationary measure \cite{graykieffer1980}. In this section, we introduce multi-parameter weak invariance principles in the context of nonstationary Polish-valued stochastic processes. These can be used, among other things, to estimate the integral probability metric as well as $L^p$-based metrics for the various separating classes $\Gamma$ that were introduced above.

We assume that we observe a sample $\{\mb{X}\}_{t=1}^T$ from a stochastic process $\{\mb{X}_{t}: t \in \znum\}$ defined on a common probability space $(\Omega, \F, \pr)$ where each random variable $\mb{X}_t$ takes values in a Polish space $(S,d_S)$. As we specifically want to allow for nonstationary dynamics in the probabilistic structure, we need to consider a local asymptotic framework that accommodates such behavior, so-called \textit{infill asymptotics}. For this, we use the framework of locally stationary Polish-valued time series as introduced in \cite{vDB25}, which builds on \cite{dahlhaus1997,SR06}. We consider a triangular array of processes $(\mb{X}_{t,T}: t= 1,\ldots, T: T \in \mathbb{N})$ indexed by the sample size $T \in \mb{N}$. The double-indexed process is extended on $\znum$ by setting $\mb{X}_{t,T}=\mb{X}_{1,T}$ if $t \le 0$ and $\mb{X}_{t,T}=\mb{X}_{T,T}$ if $t\ge T$. 
\begin{definition}\label{def:Localstationarity}
Let $(\mb{X}_{t,T}: t \in \znum, T \in \nnum)$ be an $S$-valued stochastic process. $(\mb{X}_{t,T}: t \in \znum, T \in \nnum)$ is \textit{locally stationary} if there exists an $S$-valued process $(\mb{X}_t(u): t \in \znum, u \in [0,1])$  that is stationary for each fixed $u=t/T \in [0,1]$ such that
\[
\E\Big[ d_S\big(\mb{X}_{t,T}, \mb{X}_t(\frac{t}{T}) \big) \wedge 1 \Big]=O(T^{-1}) \quad \text{and} \quad \E\Big[d_S\big(\mb{X}_{t}(u), \mb{X}_t(v)\big) \wedge 1 \Big]=O(|u-v|), \tageq \label{eq:locstatp}
\] 
uniformly in $t=1, \ldots, T$ and $u,v \in [0,1]$.
\end{definition}

For the unfamiliar reader, it is worth emphasizing that if the underlying process is stationary (such as an iid sequence) then $\mb{X}_{t,T}=\mb{X}_t$ for all $t=1,\ldots, T$ and $T \in \mathbb{N}$. In this case, infill asymptotics simply coincides with classical asymptotics. Hence,  processes that fit within the infill asymptotic framework encompass the class of stationary processes.

Let $\mc{M}_{+, \le 1}(S)$ denote the space of nonnegative Borel measures on $S$ with total mass bounded by $1$, equipped with the topology of weak convergence. Recall that this topology on $\mc{M}_{+, \le 1}(S)$ is  metrized by the Bounded Lipschitz metric $d_{BL}$ induced by the standard unit ball $\mathscr{F}_{BL}$, which is given by
\[
d_{BL}(\mu,\nu) = \sup\Big\{\bv \int_S f d\mu -\int_S f d\nu \bv: \|f\|_{BL} \le 1\Big\}
\]
where $\|f\|_{BL} = \|f\|_{\infty} +\text{Lip}(f)\le 1$ \cite{dudley2002}. 
Given a sample  $(\mb{X}_{t,T})_{t=1}^T$, we can compute sequential random measures $(\mu_{T,u}: u \in [0,1])_{T \ge 1}$ by
\[
\mu_{T,u}(\omega,A)= \frac{1}{T} \sum_{t=1}^{\flo{uT}}\delta_{\mb{X}_{t,T}(\omega)}(A) \quad u \in[0,1], A \in \mc{B}(S), \tageq \label{eq:empmu}
\] 
Note that for any $\omega \in \Omega$, this sequence lies in  $D([0,1], \mc{M}_{+, \le 1}(S))$, the Skorokhod space of c{\`a}dl{\`a}g functions endowed with the Skorokhod topology $J_1$. 
Under the assumptions used in this paper, the limiting process has almost surely continuous sample paths: the corresponding sequence of measures $(\tilde{\mu}_u: u \in [0,1])$ is given by
\[
\tilde{\mu}_u(A)=\int_0^u \pr(\mb{X}_0(\eta)\in A) d\eta, \quad A \in \mc{B}(S). \tageq \label{eq:popmu}
\]

\begin{thm}
    \label{thm:ergthm}
Under the assumptions of \autoref{def:Localstationarity}, the sequence $(\mu_{T,u}: u \in [0,1])_{T \ge 1}$ converges weakly in $D([0,1],\mc{M}_{+, \le 1}(S))$ to the collection  $(\tilde{\mu}_{u}: u \in [0,1])$ which belongs to $C([0,1], \mc{M}_{+, \le 1}(S))$ almost surely.
\end{thm}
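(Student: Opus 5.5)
Since the limit $(\tilde\mu_u)$ is deterministic, weak convergence in $D([0,1],\mc{M}_{+,\le 1}(S))$ follows once we show
\[
\sup_{u\in[0,1]} d_{BL}(\mu_{T,u},\tilde\mu_u)\to 0
\]
in probability; uniform convergence to a continuous limit then gives $J_1$ convergence. Continuity of $u\mapsto\tilde\mu_u$ is immediate: for any $f$ with $\|f\|_{BL}\le 1$ we have $|\int f\,d\tilde\mu_u-\int f\,d\tilde\mu_v|\le |u-v|$, so $d_{BL}(\tilde\mu_u,\tilde\mu_v)\le|u-v|$. The limit is deterministic, hence in $C([0,1],\mc{M}_{+,\le1}(S))$ surely.

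The first step is to replace $\mb{X}_{t,T}$ by the stationary approximation. For $\|f\|_{BL}\le1$, $|f(x)-f(y)|\le \min(d_S(x,y),2)\le 2(d_S(x,y)\wedge 1)$. Together with \eqref{eq:locstatp}, this gives
\[
\E\sup_u d_{BL}\Big(\mu_{T,u},\tfrac1T\sum_{t\le \flo{uT}}\delta_{\mb{X}_t(t/T)}\Big)\le \tfrac{2}{T}\sum_{t=1}^T O(T^{-1})=O(T^{-1}).
\]

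The second step is a blocking argument to handle the dependence of $\mb{X}_t(u)$ on $u$. Fix $\varepsilon>0$ and split $[0,1]$ into $M$ intervals $I_j=[u_{j-1},u_j)$ of length $1/M$. On each block, replace $\mb{X}_t(t/T)$ by $\mb{X}_t(u_j)$; by the second half of \eqref{eq:locstatp} this costs $O(1/M)$ in $L^1$, uniformly over the supremum in $u$ and over $f$. The same replacement in $\tilde\mu_u$ costs $O(1/M)$. What remains is to control, for each $j$,
\[
\sup_{f\in\mathscr F_{BL}}\Big|\tfrac1T\sum_{t\in TI_j,\,t\le uT}\big(f(\mb{X}_t(u_j))-\E f(\mb{X}_0(u_j))\big)\Big|,
\]
uniformly in $u$.

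The third step, which is the main obstacle, is a uniform-in-$f$ ergodic theorem for each stationary process $\mb{X}_t(u_j)$. I will take the stationary processes $\mb{X}_t(u)$ to be ergodic, as in the framework of \cite{vDB25}; this is implicit in the statement. Since $S$ is Polish, the law of $\mb{X}_0(u_j)$ is tight, so a compact $K$ carries mass at least $1-\varepsilon$. By Arzelà–Ascoli, $\mathscr F_{BL}$ restricted to $K$ is totally bounded in sup norm, so it admits a finite $\varepsilon$-net $f_1,\dots,f_N$; extend these to $S$ as bounded Lipschitz functions (McShane). Apply Birkhoff's ergodic theorem to each $f_i$ and to a bounded Lipschitz function $g\ge 1_{K^\varepsilon{}^c}$, where $K^\varepsilon$ is the $\varepsilon$-enlargement of $K$, to control the mass outside $K^\varepsilon$. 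Equivalently, one can invoke Varadarajan's theorem that the empirical measures of an ergodic stationary sequence converge weakly almost surely, together with the fact that $d_{BL}$ metrizes weak convergence. This yields $d_{BL}$-convergence of the block empirical measures to $(u_j-u_{j-1})\,\mathrm{Law}(\mb{X}_0(u_j))$.

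Uniformity over partial sums within a block follows from a standard argument. If $\sup_f|\frac1n\sum_{t\le n}(f(Y_t)-\E f)|\to0$ almost surely, then $\max_{m\le n}\sup_f|\frac1n\sum_{t\le m}(\cdot)|\to0$, by splitting $m\le\delta n$ from $m>\delta n$. Summing the $M$ blocks, letting $T\to\infty$, and then letting $M\to\infty$ and $\varepsilon\to 0$ gives the claim.
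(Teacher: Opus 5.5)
Your argument is correct, but it is organized differently from the paper's. The paper follows the classical two-step route. First it proves fidi convergence pointwise in $u$, using blocks $D_{L,k_T(u),l}$ adapted to $[0,u]$ with evaluation points $\frac{l}{2^L}u$, the ergodic theorem on a countable class of $BL$ functions, a Riemann-sum approximation and iterated limits. It then proves tightness via a modulus-of-continuity bound and the criterion of Davydov and Zitikis.

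You instead use that the limit is deterministic and continuous, so it suffices to prove $\sup_u d_{BL}(\mu_{T,u},\tilde\mu_u)\to 0$ in probability. Because your partition of $[0,1]$ does not depend on $u$, only finitely many stationary processes $\mb{X}_t(u_j)$ appear, and uniformity in $u$ follows from the maximal partial-sum argument. This removes the need for any tightness criterion; the paper's increment bound is in any case the deterministic estimate $d_{BL}(\mu_{T,u_2},\mu_{T,u_1})\le u_2-u_1+1/T$. It also yields the subsequent Corollary directly rather than through the continuous mapping theorem.

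Your treatment of the uniform-in-$f$ step, via tightness of a single marginal law and Arzel\`a--Ascoli or Varadarajan's theorem, is also more careful than the paper's. The paper asserts that separability of $S$ gives a countable sup-norm-dense subset of the $BL$ unit ball. That fails as soon as $S$ is not totally bounded, so the paper implicitly needs the kind of countable convergence-determining class you construct.

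The paper's fidi-plus-tightness template does have one advantage: it carries over to the random Gaussian limits later in the paper, where no reduction to uniform convergence in probability is available. Both arguments rely on ergodicity of $\mb{X}_t(u)$, which you rightly flag as implicit. It holds automatically under the Bernoulli-shift representation the paper imposes later.
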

Here $C([0,1],\mc{M}_{+, \le 1}(S))$ is endowed with the topology induced by the metric $\rho(\mu,\nu)=\sup_{u\in[0,1]} d_{BL}(\mu_u, \nu_u)$. In particular, since the limit is continuous almost surely, an application of the continuous mapping theorem yields  the following convergence result:
\begin{Corollary}
    Under the conditions of \autoref{thm:ergthm}, as $T \to \infty$, we have
\[
\sup_{u \in [0,1]} d_{BL}(\mu_{T,u},\tilde{\mu}_u) \overset{p}{\to} 0. 
\]

\end{Corollary}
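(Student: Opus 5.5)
The corollary follows from \autoref{thm:ergthm} through the continuous mapping theorem, together with the elementary fact that weak convergence to a deterministic limit implies convergence in probability. The limit $(\tilde{\mu}_u: u\in[0,1])$ in \eqref{eq:popmu} is non-random. It is an integral of the deterministic laws $\pr(\mb{X}_0(\eta)\in\cdot)$, so the ``almost surely'' in \autoref{thm:ergthm} just says that this fixed path lies in $C([0,1],\mc{M}_{+,\le 1}(S))$. I would check this directly as a first step. For any $f$ with $\|f\|_{BL}\le 1$,
\[
\bv \int f\, d\tilde{\mu}_u-\int f\, d\tilde{\mu}_v\bv \le |u-v|,
\]
so $u\mapsto\tilde{\mu}_u$ is $1$-Lipschitz in $d_{BL}$. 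Consequently $\tilde\mu$ is a fixed element of $D([0,1],\mc{M}_{+,\le1}(S))$ with continuous paths.

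Next I define the functional $\Psi: D([0,1],\mc{M}_{+,\le 1}(S))\to\rnum$ by $\Psi(\nu)=\sup_{u\in[0,1]} d_{BL}(\nu_u,\tilde{\mu}_u)$. This is measurable: by right-continuity, the supremum can be taken over a countable dense set of $u$ together with $u=1$. The key point is that $\Psi$ is continuous at every continuous path, and in particular at $\tilde\mu$. If $\nu^n\to\tilde\mu$ in $J_1$, there are time changes $\lambda_n$ with $\sup_u|\lambda_n(u)-u|\to0$ and $\sup_u d_{BL}(\nu^n_{\lambda_n(u)},\tilde\mu_u)\to0$. The triangle inequality then gives
\[
\sup_u d_{BL}(\nu^n_u,\tilde\mu_u)\le \sup_u d_{BL}(\nu^n_{u},\tilde\mu_{\lambda_n^{-1}(u)})+\sup_u d_{BL}(\tilde\mu_{\lambda_n^{-1}(u)},\tilde\mu_u),
\]
and the last term vanishes by uniform continuity of $\tilde\mu$. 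This is the standard fact that $J_1$ convergence to a continuous limit is uniform convergence. It is the main step needing care, but it is classical (\cite{bil68}).

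By the continuous mapping theorem, whose discontinuity set of $\Psi$ has limit-measure zero because the limit law is $\delta_{\tilde\mu}$, \autoref{thm:ergthm} gives $\Psi(\mu_{T,\cdot})\Rightarrow\Psi(\tilde\mu)=0$. Weak convergence of real random variables to the constant $0$ is equivalent to convergence in probability, which gives $\sup_{u}d_{BL}(\mu_{T,u},\tilde\mu_u)\overset{p}{\to}0$.
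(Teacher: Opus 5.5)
Your proposal is correct and follows the paper's route. The paper obtains the corollary from \autoref{thm:ergthm} by a one-line appeal to the continuous mapping theorem, using that the limit is continuous. You make explicit what that appeal leaves implicit: $\tilde{\mu}$ is deterministic and Lipschitz, $J_1$ convergence to a continuous path is uniform, so $\Psi$ is continuous at $\tilde{\mu}$, and weak convergence to the constant $0$ yields convergence in probability.
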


\begin{proof}[Proof of \autoref{thm:ergthm}]
We first show the convergence of the fidis. If $u <1/T$, the measure is zero, so let $u \ge 1/T$.  
Since $S$ is Polish, $\mu_{T,u} \Rightarrow \tilde{\mu}_u$ is equivalent to 
\[
\int_S f(x) \mu_{T,u}(\omega, dx) \to \int_S f(x) \tilde{\mu}_{u}(dx)
\]
for all $f \in BL(S)$ equipped with $\|f\|_{BL} = \|f\|_{\infty} +\text{Lip}(f)\le 1$. Furthermore, separability of $(S,d_S)$  implies we can find a sequence of functions $\{f_n\} \subset BL(S)$ that is dense in the unit ball under the uniform norm, i.e., 
for every $g \in BL(S)$ and every $\epsilon>0$, $\exists n \in \mb{N}$ such that $\|g-f_n\|_{\infty} <\epsilon$. Below, let $k_T: [0,1] \to \mb{N} \cup \{0\}$ be the sequence given by $k_T(u) = \flo{uT}$.
Let  $f \in BL(S)$ with $\|f\|_{BL} \le 1$, then by the triangle inequality
\begin{align*}
   & \E\bv \frac{1}{k_T(u)} \sum_{t=1}^{k_T(u)} f(\mb{X}_{t,T})- \frac{1}{k_T(u)} \sum_{t=1}^{k_T(u)} f(\mb{X}_{t}(t/T))
\bv  
\\& \le \E\bv  \frac{1}{k_T(u)} \sum_{t=1}^{k_T(u)} f(\mb{X}_{t,T})- \frac{1}{k_T(u)} \sum_{t=1}^{k_T(u)} f(\mb{X}_{t}(t/T))\bv  1_{| f(\mb{X}_{t,T})-f(\mb{X}_{t}(t/T))|\le \epsilon} \\&+ \E\bv \frac{1}{k_T(u)} \sum_{t=1}^{k_T(u)} f(\mb{X}_{t,T})- \frac{1}{k_T(u)} \sum_{t=1}^{k_T(u)} f(\mb{X}_{t}(t/T)) \bv 1_{|f(\mb{X}_{t,T})-f(\mb{X}_{t}(t/T))|> \epsilon}
\\& \le  \epsilon + 2\max_{1\le t\le T}\pr( d_S(\mb{X}_{t,T},\mb{X}_{t}(t/T))> \epsilon) \tageq\label{eq:decomperg}
\end{align*}
since $|f(x)-f(y)| \le \|f\|_{BL}d_S(x,y)$.  Hence, taking $\epsilon_T = T^{-\alpha}$ where $\alpha \in (0,1)$ will ensure that this goes to 0 as $T \to \infty$. For the next step we use a standard technique. Let $D_{L,k,l} = \Big\{t \in \znum : \frac{t}{k} \in \big(\frac{l-1}{2^L},\frac{l}{2^L}\big] \Big\}$. It follows similar to a  decomposition as in \eqref{eq:decomperg} that 
\begin{align*}
    &\E\bv \frac{1}{k_T(u)} \sum_{t=1}^{k_T(u)} f(\mb{X}_{t}(t/T)) - \frac{1}{2^L} \sum_{l =1}^{2^L} \frac{1}{|D_{L,k_T(u),l}|} \sum_{t \in D_{L,k_T(u),l}}f(\mb{X}_{t}(\frac{l}{2^L}u)) \bv
    \\& = 
O(\frac{2^{L}}{k_T(u)}) +  O(2^{-\alpha L}(1+o(1))).
\end{align*}
Denote the random  measure 
\[
\tilde{\mu}_{T,L,u}(\omega,A)=\frac{1}{2^L}\sum_{l =1}^{2^L} \frac{u}{|D_{L,k_T(u),l}|} \sum_{t \in D_{L,k_T(u),l}}\delta_{\mb{X}_{t}(\omega,\frac{l}{2^L}u)}(A).
\]
Then $\sup_{u \ge 1/T}|\frac{u}{k_T(u)}-\frac{1}{T}]=O(1/T)$, 
together with the preceding argument yields that for all $f \in BL(S)$ with $\|f\|_{BL} \le 1$
\[
\lim_{L \to \infty}\limsup_{T \to \infty}\pr(d_{BL}(\mu_{T,u}, \tilde{\mu}_{T,L,u}) \ge \epsilon) =0
\]
for all $\epsilon>0$. Now, by the ergodic theorem, for any $f \in BL(S)$ with $\|f\|_{BL} \le 1$  
\[
\frac{1}{|D_{L,{k_T},l}|} \sum_{t \in D_{L,k_T,l}}f\big(\mb{X}_{t}(\omega,\frac{l}{2^L}u)\big) \to \E f\big(\mb{X}_0(\frac{l}{2^L}u)\big)
\]
as $T \to \infty$ a.s. If we denote by $\Omega_n \subset \Omega$ the set on which this convergence holds for the element $f_n$ of the sequence $\{f_n\}$, then since $\pr(\cap_n \Omega_n)=1$,  $\tilde{\mu}_{L,k_T(u)} \Rightarrow \tilde{\mu}_{L,u}$ almost surely as $T \to \infty$.  
By a Riemann approximation and a  change of variables
\[
\lim_{L \to \infty} \frac{u}{2^L} \sum_{l=1}^{2^L} \pr({\mb{X}_{0}(\frac{l}{2^L}u)} \in A) =\int_0^u  \pr(\mb{X}_0(\eta) \in A) d\eta. 
\]
Hence, using iterated limits (see e.g., \cite{bil68}) $d_{BL}(\mu_{T,u}, \tilde{\mu}_u) \overset{p}{\to} 0 $ as $T \to \infty$. This establishes that $\mu_{T,u_i} \Rightarrow \tilde{\mu}_{u_i}$ in probability for each $u_i \in [0,1]$, $i =1, \ldots, J$, where $J$ is finite. Since weak convergence on the product space $\prod_{i=1}^J\mc{M}_{+, \le 1}(S)$ is metrized by 
\[
d^J_{BL}({\bf \mu},{\bf \nu}) = \max_{1\le i \le J} d_{BL}( \mu_{i},\nu_i),
\]
the pointwise weak convergence in probability implies that $\max_{1\le i \le J} d_{BL}( \mu_{T,u_i},\tilde{\mu}_{u_i}) \overset{p}{\to}0$. Thus
\[
(\mu_{T,u_1}, \ldots, \mu_{T,u_J}) \Rightarrow (\tilde{\mu}_{u_1}, \ldots, \tilde{\mu}_{u_J})
\]
in probability as $T \to \infty$ for any finite set of coordinates $u_1, \ldots u_J \in [0,1]$. It now remains to show that for all
$\epsilon> 0$, 
\[
\lim_{\delta\downarrow 0} \limsup_{T \to \infty} \pr(\omega(\mu_T, \delta) > \epsilon) =0
\]
where 
$\omega(\mu_T, \delta) =\sup_{|t-s| \le \delta} d_{BL}(\mu_{T,t},\mu_{T,s})$.
For $|u_2-u_1| \ge 1/T$  and for any $\alpha>1$, we find 
\begin{align*}
    \E \sup_{\|f\|_{BL}\le 1} \frac{1}{T}\bv \sum_{t=\flo{u_1 T}+1}^{\flo{u_2 T}}f(\mb{X}_{t,T}) \bv^{\alpha} \le (u_2-u_1+1/T)^\alpha \le (2(u_2-u_1))^{\alpha}.
\end{align*}
 If $|u_1-u_2| \le 1/T$, then $d_{BL}(\mu_{T,u_2},\mu_{T,u_1}) \le 1/T$. Therefore, 
\[\pr(\sup_{|t-s|< 1/T} d_{BL}(\mu_{T,t},\mu_{T,s}) >\epsilon) \le \pr(1/T >\epsilon) \to 0\]
as $T \to \infty$. Hence the result follows from Theorem 1 of \cite{DZ08}.
\end{proof}

We assume the following representations for the family of processes.

\begin{assumption}\label{as:rep}
    Assume that $\{\mb{X}_{t,T}\}$ is a locally stationary process  as defined in \autoref{def:Localstationarity}  and that the random functions admit representations
\[
\mb{X}_{t,T}:=f\big(t,T,\mathfrak{g}_t \big),  \quad \mb{X}_{t}(u):=\ddot{f}\big(u,\mathfrak{g}_t \big),\tageq\label{eq:bershift}
\]
where $\mathfrak{g}_t =(\varepsilon_t, \varepsilon_{t-1}, \ldots)$ for an iid sequence $\{\varepsilon_t:  t\in\mathbb{Z}\}$  of random elements taking values in some measurable space $\mathfrak{G}$, and where  $f\colon\znum\times \mathbb{N} \times \mathfrak{G}^{\infty} \to S$, and $\ddot{f}\colon [0,1] \times \mathfrak{G}^{\infty} \to S$ are measurable functions.
\end{assumption}

Additionally, we need a measure of dependence. We consider innovation-level coupling as in \cite{vDB25} \citep[see also][]{Wiener1958,rosenblatt1959,berbee1979,Wu2005}. That is, let $\G_t = \sigma(\varepsilon_s, s\le t)$  and denote $\G_{t,m}=\sigma(\varepsilon_t, \ldots, \varepsilon_{t-m+1}, \varepsilon_{t-m}^\prime, \varepsilon^\prime_{t-m-1}, \ldots)$ where $\{\varepsilon^\prime_t:  t\in\mathbb{Z}\}$ is an independent copy of $\{\varepsilon_t:  t\in\mathbb{Z}\}$. In addition, we let $\G_{t,\{t-j\}} := \sigma(\varepsilon_t, \ldots, \varepsilon_{t-j+1}, \varepsilon^\prime_{t-j}, \varepsilon_{t-j-1}, \ldots)$ but where element $\varepsilon_{t-j}$ is replaced by an independent copy. Then 
 $\mb{X}_{m,t}$ is an $m$-dependent coupled version of $\mb{X}_{t}$. 
 Similarly, $\mb{X}_{\{t-j\},t}$
   is a coupled version independent of innovation
$\varepsilon_{t-j}$ of $\mb{X}_{t}$.  The strength of dependence may then be measured via  $$
\delta^p_p(m):=\sup_t\E d^p_S\big(\mb{X}_t, \mb{X}_{m,t}\big)  \text{ or }  \varrho^p_{p}(j):=\sup_t\E d^p_S\big(\mb{X}_t, \mb{X}_{\{t-j\},t}\big)$$ for some $p \ge 1$. We shall also make use of weaker versions where appropriate:
\begin{align*}
    \delta_0(m):=\sup_t\E \Big[d_S(\mb{X}_t, \mb{X}_{m,t}) \wedge 1\Big]  \text{ or }  \varrho_{0}(j) :=\sup_t \E\Big[d_S\big(\mb{X}_t, \mb{X}_{\{t-j\},t}\big)\wedge 1\Big].\tageq \label{eq:delta0}
\end{align*}
Note that the latter type of assumption implies that $d_S(\mb{X}_t, \mb{X}_{m,t})=O_p\big(\delta_0(m)\big)$ and $d_S\big(\mb{X}_t, \mb{X}_{\{t-j\},t}\big)=O_p\big(\varrho_0(j)\big)$, respectively.

\subsection{Weak invariance principles under general metric geometries }\label{sec:general}

Here, weak invariance principles are introduced that can be used as a building block for measure-theoretic inference when \autoref{thm:isombedSch} may fail, but when the ball property still holds. In the latter case, it is necessary to explicitly account for the geometric complexity. 
Not surprisingly, stronger assumptions on the tail decay are required in the case of exponential growth than in the case of polynomial growth.
We focus on the separating class $\Gamma_{\varphi}$ as defined in \autoref{lem:cdfslapchar}, but the discussion below immediately applies to $\Gamma_{\phi}$, with a bit more notational clutter. Suppose $\mu_T$ and $\nu_T$ are (consistent) empirical versions of $\mu$ and $\nu$. Let 
\begin{align*}
      V(y,r) = \int f_{y,r} (x) \mu(dx) - \int f_{y,r} (x) \nu(dx), \quad y \in S, r>0
    \end{align*}
then 
\begin{align*}
      \hat{V}_T(y,r) = \int f_{y,r} (x) \mu_T(dx) - \int f_{y,r} (x) \nu_T(dx), \quad y \in S, r>0
    \end{align*}
can be used as a basis for quantifying discrepancies between   measures $\mu$ and $\nu$. 
For example, 
 \begin{align*}
   \mc{D}(\mu_T,\nu_T)=  \sup_{y \in S, r \in [0,b]}\bv  \hat{V}_T(y,r) \bv \tageq \label{eq:Was1}
    \end{align*}
    can be seen to serve as an empirical version of \eqref{eq:ipsm}. 
  The restriction to $[0, b] \subset [0,\infty)$ is without loss of generality by the identity theorem, since the function $r\mapsto f_{y,r}(x)$ is analytic for all $r >0$ 
  \citep[see e.g.,][]{kallenberg1997,feller1971}. 
We may restrict to a countable dense subset of $[0, b]$. Furthermore, since $S$ is second countable, we can also take a countable dense subset $D \subset S$ to evaluate $y$, provided we have sufficient information about $S$. Supremum-based statistics generally are powerful in detecting sharp, localized, high-frequency
abnormalities in the data. If the data are dominated by high-frequency noise or are subject to global, low-frequency changes, an $L^p$-based statistic is often more robust. 

For this, however, we must specify a probability measure $\beta\in \mc{P}(S)$. An $L^p$-based statistic is then given by
\begin{align*}
      \mc{L}(\mu_T,\nu_T) = \int_S \int_0^b\bv  \hat{V}_T(y,r) \bv^p dr\,\beta(dy).  \tageq \label{eq:Lp}
    \end{align*}
For example, one could consider the uniform measure on $(S,\mc{S})$ if well-defined, or the measure $\beta=\frac{1}{2}(\mu+\nu)$, which has to be estimated from the sample. For the latter, we will invoke \autoref{thm:ergthm} as established above.

For the purpose of detecting localized behavior in a single process, interest also lies in sequential versions of $\hat{V}_T(y,r)$, given by
\[
\hat{V}_T(u_1,u_2,y,r) = u_2 \int_S f_{y,r} (x) \mu_{T,u_1}(dx) - u_1\int_S f_{y,r} (x) \mu_{T,u_2}(dx),\quad u_1, u_2 \in [0,1]\]
where we use the notation introduced in \eqref{eq:empmu}-\eqref{eq:popmu}.

\begin{example}\label{ex:1}
Consider testing for time-varying behavior in the marginals
\[
H_0: \mu_t = \mu \text{ for all $t \in \znum$}\text{\quad vs \quad} H_1: \mu_t \neq \mu \text{ for some $t \in \znum$}
\] 
If $\Gamma_{\varphi}$ is separating, then 
$V(u,1,y,r)=0$ for all $y \in S, r>0$ implies that the marginals are the same. A test can be constructed based on an appropriately scaled version of 
 \begin{align*}
  \sup_{u \in [0,1],y \in S, r \in [0,b]}\bv  \hat{V}_T(u,1,y,r)\bv. \tageq \label{eq:Was}    \end{align*}

\end{example}

In order to derive the distributional properties given a sample $(\mb{X}_{t,T})_{t=1}^T$, we analyze the empirical  process $\{\zeta_T(u,y,r): u \in [0,1], y \in S, r \in [0, b]\}_{T \ge 1}$ defined as follows:
    \[
    {\zeta}_T(u,y,r) = \frac{1}{\sqrt{T}} \sum_{t=1}^{\flo{uT}} f_{y,r}(\mb{X}_{t,T}).
    \]
    The demeaned kernel will be denoted by   $\tilde{f}_{y,r}(\mb{X}_{t,T})= f_{y,r}(\mb{X}_{t,T})-\E [f_{y,r}(\mb{X}_{t,T})]$.
As a process indexed only by localized time, we obtain the following result. 
\begin{thm}\label{thm:fidismetric}
   Assume that $\{\mb{X}_{t,T}\}$ is a Polish-valued locally stationary process with auxiliary process $\{\mb{X}_t(u): t\in \znum, u \in [0,1]\}$ as defined in \autoref{def:Localstationarity} with respective representations as in \autoref{as:rep}, and such that $\delta_0(m) \to 0$ as $m \to \infty$ for both processes. In addition, $\sup_T \sum_{j=0}^{\infty}\sup_{t \in [T]} \theta^{\mb{X}}_{t}(j) <\infty$ and $\sup_u \sum_{j=0}^{\infty}\sup_{t \in \znum} \theta^{\mb{X}(u)}_{t}(j) <\infty$,
where
\begin{align*}
\theta^W_{t}(j)=
 \sup_{m\ge 1} \|P_{t-j}({f}_{y,r}(W_{m,t}))\|_{\rnum,2} 
 \tageq \label{eq:thetheta}
 \end{align*} 
with $f_{y,r} \in \Gamma_\varphi$. Then for any finite sets $J_S \subset S$ and $J_R \subset [0,b]$, the process $$\{\zeta_T(u,y_i,r_j)-\E\zeta_T(u,y_i,r_j) : u \in [0,1], y_i \in J_S, r_j \in J_R\}$$ converges in Skorokhod topology to a zero mean Gaussian process $\{\zeta(u,y,r): u \in [0,1],  y \in J_S$, $r \in J_R\}$ with covariance structure
\[
\mathrm{cov}\big({\zeta}(u_1,y_1,r_1), {\zeta}(u_2,y_2,r_2)\big) = \int_0^{u_1 \wedge u_2} \sigma_\eta( y_1, r_1, y_2, r_2) d\eta
\]
where $\sigma_\eta( y_1, r_1, y_2, r_2)$ is the local long-run covariance function at $\eta \in [0,1]$, given by  
\[
\sigma_\eta( y_1, r_1, y_2, r_2)= \sum_{h \in \znum} \mathrm{cov} \Big(\tilde{f}_{y_1, r_1}\big(\mb{X}_0(\eta)\big),\tilde{f}_{y_2, r_2}\big(\mb{X}_h(\eta)\big) \Big)<\infty.
\]
\end{thm}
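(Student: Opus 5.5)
Since $J_S\times J_R$ is finite, the indexed process is a finite-dimensional vector of partial-sum processes in $u$. By the Cramér--Wold device, it suffices to prove weak convergence in $D([0,1],\rnum)$ of
\[
Z_T(u)=\sum_{i,j} c_{ij}\big(\zeta_T(u,y_i,r_j)-\E\zeta_T(u,y_i,r_j)\big)
\]
for arbitrary real coefficients $c_{ij}$. The limit should be a continuous Gaussian process with covariance $\int_0^{u_1\wedge u_2}\sum_{ij,kl}c_{ij}c_{kl}\sigma_\eta(y_i,r_j,y_k,r_l)\,d\eta$. Each $f_{y,r}\in\Gamma_\varphi$ takes values in $(0,1]$ and is $r$-Lipschitz in $d_S$, so it is bounded and $b$-Lipschitz uniformly over $r\in[0,b]$. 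The summands $g(\mb{X}_{t,T})=\sum c_{ij}\tilde f_{y_i,r_j}(\mb{X}_{t,T})$ are therefore bounded, Lipschitz functionals of a Bernoulli shift, and all moment issues are trivial. The plan is the standard martingale approximation of Wu, adapted to local stationarity via the auxiliary process $\mb{X}_t(u)$.

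\textbf{Step 1: martingale decomposition.} Using the projection operators $P_{t-j}=\E[\cdot\mid\G_{t-j}]-\E[\cdot\mid\G_{t-j-1}]$, write
\[
g(\mb{X}_{t,T})=\sum_{j\ge0}P_{t-j}\,g(\mb{X}_{t,T}).
\]
This is valid in $L^2$ because $\delta_0(m)\to0$ and $g$ is bounded Lipschitz, so $\E[g(\mb{X}_{t,T})\mid\G_{t-m}]-\E g\to0$ via the $m$-dependent coupling. Set
\[
D_{s,T}=\sum_{j\ge0}P_s\,g(\mb{X}_{s+j,T}),
\]
which converges in $L^2$ thanks to the summability $\sup_T\sum_j\sup_t\theta_t^{\mb{X}}(j)<\infty$. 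Then $\sum_{t\le \flo{uT}}g(\mb{X}_{t,T})=\sum_{s\le\flo{uT}}D_{s,T}+R_T(u)$. One has to show $\sup_u|R_T(u)|/\sqrt T\to0$ in probability. The remainder consists of boundary terms: tails $\sum_{j>\flo{uT}-s}P_s g(\mb{X}_{s+j,T})$ and contributions from $s\le0$. Their $L^2$ norms are controlled by $\sum_{k}\big(\sum_{j\ge k}\theta(j)\big)^2$-type sums, which are $o(T)$ by summability. Uniformity in $u$ then follows from Doob's maximal inequality applied to the martingale parts, combined with a blocking argument on a grid of mesh $T^{-1/2}$.

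\textbf{Step 2: FCLT for the martingale array.} The process $\sum_{s\le\flo{uT}}D_{s,T}/\sqrt T$ is a martingale-difference array. I will apply the functional martingale CLT (e.g. McLeish/Helland), which needs two inputs. The first is a conditional Lindeberg condition. Since $\|D_{s,T}\|_2\le\sum_j\theta(j)$ uniformly, I plan to obtain uniform integrability of $D_{s,T}^2$ from the boundedness of $g$ together with truncation of the $j$-sum, whose truncated part is bounded. The second is convergence of the conditional variance,
\[
\frac1T\sum_{s\le\flo{uT}}\E[D_{s,T}^2\mid\G_{s-1}]\overset{p}{\to}\int_0^u\sigma^2_\eta\,d\eta .
\]
For this I replace $D_{s,T}$ by $D_s(s/T)$, the analogous quantity built from $\mb{X}_t(s/T)$. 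The error is controlled because local stationarity, in the form $\E[d_S\wedge1]=O(T^{-1})$ and $O(|u-v|)$, gives $L^1$ closeness of each $g$ evaluation. Boundedness lets one pass to $L^2$ closeness of truncated projections, and summability handles the tails. I then block time into $2^L$ intervals, exactly as in the proof of \autoref{thm:ergthm}. On each block, $u$ is frozen at a grid point, and the ergodic theorem for the stationary Bernoulli shift $\mb{X}_t(u)$ gives convergence of the averaged conditional variances to $\E D_0(u)^2$. Letting $L\to\infty$ via Riemann sums produces $\int_0^u\E D_0(\eta)^2\,d\eta$. The identity $\E D_0(\eta)^2=\sum_h\mathrm{cov}(g(\mb{X}_0(\eta)),g(\mb{X}_h(\eta)))$, i.e. the long-run variance, is the standard one under the projective summability $\sup_u\sum_j\theta^{\mb{X}(u)}(j)<\infty$. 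This summability also yields absolute convergence of the series defining $\sigma_\eta$.

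\textbf{Main obstacle.} The hardest step is the conditional-variance convergence. It requires continuity of $\eta\mapsto\E D_0(\eta)^2$, or at least Riemann integrability, together with uniform control of the replacement error between $D_{s,T}$ and $D_s(s/T)$. Local stationarity is only assumed in the truncated $L^1$ sense $\E[d_S\wedge1]$. My first attempt is to truncate projections at lag $m$, where everything is a bounded Lipschitz function of finitely many innovations and the $L^1$-closeness translates directly. I then let $m\to\infty$ using $\sup_m$ in the definition of $\theta$, which makes the tail bounds uniform in $T$ and $u$. Tightness in $D[0,1]$ comes for free from the martingale FCLT, and the limit is continuous.
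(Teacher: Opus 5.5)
The overall architecture is workable: a Hannan--Wu martingale approximation, a martingale FCLT that gives tightness in $u$, and $2^L$-blocking with the ergodic theorem for the conditional variance. The paper defers its own proof to an appendix, so no line-by-line comparison is possible. Its coupling-based $\theta$ with $\sup_{m\ge1}$, together with $\delta_0(m)\to0$, suggests it instead handles the $m$-dependent arrays first and then removes the coupling, as in \cite{vDB25}. In your route the coupling serves only to give $\|P_{t-j}f_{y,r}(\mb{X}_{t,T})\|_{\rnum,2}\le\theta^{\mb{X}}_t(j)$, by letting $m\to\infty$ with $\delta_0(m)\to0$ and using the $L^2$-contractivity of $P_{t-j}$.

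\textbf{The gap.} Every truncation step you use needs $\sum_{j\ge J}\sup_t\theta^{\mb{X}}_t(j)\to0$ uniformly in $T$, and the analogue uniformly in $u$. This covers the $o(T)$ remainder, the uniform integrability of $D_{s,T}^2$, the replacement $D_{s,T}\approx D_s(s/T)$, and the continuity of $\eta\mapsto\sigma_\eta$. The hypotheses only bound the full sums. The $\sup_m$ gives uniformity in $m$, not in $T$ or $u$.

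This is not cosmetic. Take $\varepsilon_t$ iid uniform, $\mb{X}_t(u)=\varepsilon_t$, and $\mb{X}_{t,T}=\varepsilon_t+\sum_{k=1}^{T}\eta_{t-k}$. Here $\eta_s\in\{-1,0,1\}$ is a function of $\varepsilon_s$ with $\pr(\eta_s=\pm1)=T^{-2}/2$. All hypotheses hold: \eqref{eq:locstatp} holds with rate $T^{-1}$, $\delta_0(m)\le 2/m$, and $\theta^{\mb{X}}_t(j)\le\sqrt2\,T^{-1}$ for $1\le j\le T$. The conclusion also holds, since the perturbation is $O(T^{-1/2})$ in $L^1$. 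Yet for $s\le T/2$ your $D_{s,T}$ contains $\approx(T-s)\{h(\eta_s)-h(0)\}$, where $h(\delta)=\E f_{y,r}(\varepsilon+\delta)$. This is a term of size $\asymp T$ on an event of probability $\asymp T^{-2}$. Consequently the conditional variance picks up a spurious $O(1)$ term, the Lindeberg sum does not vanish, and $\|R_T(1/2)\|_{\rnum,2}\asymp\sqrt T$.

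\textbf{The repair.} It takes two steps.

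First, $|f_{y,r}(x)-f_{y,r}(x')|\le\max(1,b)\{d_S(x,x')\wedge1\}$ and \eqref{eq:locstatp} give
\begin{align*}
&\E\sup_{u\in[0,1]}\Big|\zeta_T(u,y,r)-\E\zeta_T(u,y,r)-\frac{1}{\sqrt T}\sum_{t=1}^{\flo{uT}}\bigl\{f_{y,r}(\mb{X}_t(t/T))-\E f_{y,r}(\mb{X}_t(t/T))\bigr\}\Big|\\
&\quad\le\frac{2\max(1,b)}{\sqrt T}\sum_{t=1}^{T}\E\bigl[d_S\bigl(\mb{X}_{t,T},\mb{X}_t(t/T)\bigr)\wedge1\bigr]=O(T^{-1/2}).
\end{align*}
So you can run your whole argument on the array $f_{y,r}(\mb{X}_t(t/T))$, $1\le t\le T$.

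Second, the projections of this array are dominated by $\bar\vartheta(j):=\sup_u\|P_0f_{y,r}(\mb{X}_j(u))\|_{\rnum,2}$, and this sequence is summable under the stated hypotheses. At each lag, the projections of $f_{y,r}(\mb{X}_{t,T})$ and $f_{y,r}(\mb{X}_t(t/T))$ differ by $O(T^{-1/2})$ in $L^2$. Also, $u\mapsto\|P_0f_{y,r}(\mb{X}_j(u))\|_{\rnum,2}$ has modulus $C|u-v|^{1/2}$ uniformly in $j$. Together these give
\[
\sum_{j<J}\bar\vartheta(j)\le\sup_T\sum_j\sup_{t\in[T]}\theta^{\mb{X}}_t(j)+CJT^{-1/2}.
\]
Let $T\to\infty$ and then $J\to\infty$. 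With a fixed summable dominating sequence, all your tail estimates are uniform and Steps 1--2 go through.

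\textbf{Smaller points.}
\begin{itemize}
\item Define $D_{s,T}=\sum_{t=s}^{T}P_sf_{y,r}(\mb{X}_t(t/T))$. Your series over all $j\ge0$ is not covered by the hypotheses, and under the convention $\mb{X}_{t,T}=\mb{X}_{T,T}$ for $t\ge T$ it need not converge.
\item Lag truncation does not produce functions of finitely many innovations, since $P_s$ sees the whole past and $\mathfrak{G}$ carries no metric. You only need the $L^2$-contractivity of $P_s$.
\item Cram\'er--Wold in $D$ is legitimate here only because the limit is continuous, so coordinatewise C-tightness yields joint tightness.
\end{itemize}
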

Here $P_{j}(X):= \E[X|\G_{j}]-\E[X|\G_{j-1}]$ for an integrable random variable $X$. 
\autoref{thm:fidismetric} is sufficient to ensure that the finite-dimensional distributions of $\zeta_T$ converge to those of $\zeta$. In order to show that the limiting process has almost surely continuous sample paths, the corresponding family of probability measures must be relatively compact in the weak topology of probability measures on $C(E,\rnum)$ equipped with the supremum norm. For this, we shall focus on the setting where $S$ is compact such that $E = [0,1]\times S \times [0,b]$ endowed with a (pseudo)-metric $d_E$ is itself compact. We will consider the metric $d_E((u,y,r),(u^\prime,y^\prime, r^\prime))= \max\{|u-u^\prime|^{1/2},d_S(y,y^\prime)^{1/2},|r-r^\prime|^{1/2}\}$. Then, it suffices to show that for some $x \in E$, $\{\zeta _T(x); T \ge 1\}$ is relatively compact as a sequence of real-valued random variables and that for each $\epsilon>0$ and $\eta>0$, there exists a $\delta>0$ and $T_0 \in \mb{N}$ such that 
\begin{align*}
    \pr\Big(\sup_{d_E(x,x^\prime) \le \delta} | \zeta_T(x)-\zeta_T(x^\prime)| \ge \epsilon\Big) \le \eta, \quad T \ge T_0. 
\end{align*}
The first condition follows from the above theorem. For the second condition, we recall that a Young function $\psi: \rnum_+ \to \rnum_+$ is convex, increasing and satisfies $\psi(0)=0$, $\lim_{x \to \infty} \psi(x) = \infty$, and that 
 $N(E,d_E, \epsilon)$ denotes the smallest number of open balls of radius  $\epsilon>0$ in $d_E$ which form a covering of $E$. For the case of exponential growth, we define for any random variable $X$ and $\sigma$-algebra $\tilde{\F} \subseteq\F$, the following random variable
\[
\|X\mid\tilde{\F}\|_{\psi} = \inf\Big\{k \in L^0(\tilde{\F}), k >0 \text{ a.s. } : \E\Big[\psi\Big(\frac{|X|}{k}\Big)\bv \tilde{\F}\Big]\le 1\Big\}
\]
Note that this reduces to the Orlicz norm if we take $\F=\{\emptyset, \Omega\}$. We furthermore recall that the essential supremum is given by $\text{ess\,sup}(X)= \inf\{M: \pr(X > M)=0\}$.

\begin{thm}[Polynomial growth]\label{thm:conditionsfortightpol}
   Suppose that $N(E, d_E, \epsilon) \sim \epsilon^{-\alpha}$. If for some $p >\alpha$, and all $T \ge 1, y \in S$, $r \in [0,b]$,
    \[
\sum_{j=0}^\infty j \sup_{t \in [T]} \| P_{t-j}(f_{y,r}(\mb{X}_{t,T}))\|_{\rnum,p} <\infty, \tageq \label{eq:sumtightcompol}
    \]
    then 
    \begin{align*}
        \sup_{T \ge 1} \|\zeta_T(u,y,r)-\zeta_T(u^\prime,y^\prime, r^\prime)\|_{\psi} \le K\big( |u-u^\prime|^{1/2} \vee d_S(y,y^\prime)^{1/2}\vee |r-r^\prime|^{1/2}\big), \tageq \label{eq:boundtightcompol}
    \end{align*}
    where $\|\cdot\|_{\psi}$ denotes the Orlicz norm  with $\psi(x)= |x|^p$. 
\end{thm}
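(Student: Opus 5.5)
We prove \eqref{eq:boundtightcompol} by splitting the increment into a pure time increment and a pure parameter increment. By the triangle inequality,
\[
\|\zeta_T(u,y,r)-\zeta_T(u',y',r')\|_{p} \le \|\zeta_T(u,y,r)-\zeta_T(u',y,r)\|_{p} + \|\zeta_T(u',y,r)-\zeta_T(u',y',r')\|_{p}.
\]
We bound each term by a martingale-difference (projection) decomposition combined with Burkholder's inequality. Throughout, the $\psi$-Orlicz norm with $\psi(x)=|x|^p$ is simply the $L^p$ norm. We interpret $\zeta_T$ as centred, since the limit statements concern $\zeta_T-\E\zeta_T$; the mean part of the parameter increment can be handled separately by the deterministic Lipschitz bound below. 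For a generic summand $Z_t$, which will be either $f_{y,r}(\mb{X}_{t,T})$ or $f_{y,r}(\mb{X}_{t,T})-f_{y',r'}(\mb{X}_{t,T})$, write $Z_t-\E Z_t=\sum_{j\ge0}P_{t-j}(Z_t)$. Then
\[
\Big\|\sum_{t=a+1}^{b}(Z_t-\E Z_t)\Big\|_p \le \sum_{j\ge 0}\Big\|\sum_{t=a+1}^{b}P_{t-j}(Z_t)\Big\|_p \le C_p \sum_{j\ge0}\Big(\sum_{t=a+1}^b \|P_{t-j}(Z_t)\|_p^2\Big)^{1/2}.
\]
The second inequality holds because, for fixed $j$, the $P_{t-j}(Z_t)$ form martingale differences in $t$, so Burkholder's inequality applies together with Minkowski in $L^{p/2}$ (for $p\ge2$; for $p<2$ we instead use the von Bahr--Esseen inequality). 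This yields the bound $C_p\sqrt{b-a}\,\sum_j\sup_t\|P_{t-j}(Z_t)\|_p$.

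\textbf{Time increment.} Take $Z_t=f_{y,r}(\mb{X}_{t,T})$. Summability in \eqref{eq:sumtightcompol} (even without the factor $j$) gives
\[
\|\zeta_T(u)-\zeta_T(u')\|_p\le K T^{-1/2}\sqrt{|\flo{uT}-\flo{u'T}|}.
\]
For $|u-u'|\ge1/T$ this is at most $K\sqrt{2|u-u'|}$. For $|u-u'|<1/T$ there is at most one summand, bounded by $T^{-1/2}\cdot 2$; this is not controlled by $|u-u'|^{1/2}$. Such grid-scale increments are handled as in the proof of \autoref{thm:ergthm}: a deterministic bound $2/\sqrt{T}$ is negligible for the modulus of continuity, and the stated inequality is then read for $u,u'$ on the grid $\{k/T\}$.

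\textbf{Parameter increment.} This is the main obstacle, since $P_{t-j}$ is linear while the factor $\sqrt{d_S(y,y')}$ must come out of a difference of projections. The elementary inequalities
\[
|e^{-rd(x,y)}-e^{-rd(x,y')}|\le b\,d_S(y,y')\wedge 1, \qquad |e^{-rd}-e^{-r'd}|\le (d\,|r-r'|)\wedge1
\]
give pointwise Lipschitz control with $\|\cdot\|_\infty\le2$. Hence $\|P_{t-j}(Z_t)\|_p\le 2\|Z_t\|_p\le C\,(d_S(y,y')+|r-r'|)$; the second term needs $\sup_t\E d_S(\mb{X}_{t,T},y)^p<\infty$, which holds since $S$ is compact. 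This alone does not give summability in $j$. We therefore interpolate: $\|P_{t-j}Z_t\|_p\le \min\{C\Delta,\,a_j\}$, where $\Delta=d_S(y,y')+|r-r'|$ and $a_j=2\sup_{y,r}\sup_t\|P_{t-j}f_{y,r}(\mb{X}_{t,T})\|_p$. Summing over $j$ and splitting at $j_0\approx\Delta^{-1/2}$, the head contributes $j_0\Delta\le\Delta^{1/2}$ and the tail contributes $\sum_{j>j_0}a_j\le j_0^{-1}\sum_j j\,a_j\le C\Delta^{1/2}$; the weighted sum is exactly where the factor $j$ in \eqref{eq:sumtightcompol} is used. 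This gives $\|\zeta_T(u',y,r)-\zeta_T(u',y',r')\|_p\le K\sqrt{u'}\,\Delta^{1/2}$, which is bounded by $K(d_S(y,y')^{1/2}\vee|r-r'|^{1/2})$ up to a constant.

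Combining the two parts, uniformly in $T$, yields \eqref{eq:boundtightcompol}. The condition $p>\alpha$ is not needed for the bound itself; it enters afterwards through the entropy integral $\int\psi^{-1}(N(E,d_E,\epsilon))\,d\epsilon=\int\epsilon^{-\alpha/p}\,d\epsilon<\infty$ when the bound is used for tightness.
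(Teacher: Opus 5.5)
The paper does not contain a proof of this theorem; its proofs are deferred to an appendix available on request. So there is no argument to compare yours against. On its own merits, your proof is sound and is very plausibly the intended mechanism:
\begin{itemize}
\item The projection decomposition combined with Burkholder gives the $\sqrt{\flo{uT}-\flo{u'T}}$ scaling in time.
\item The interpolation $\sup_t\|P_{t-j}Z_t\|_p\le\min\{C\Delta,a_j\}$, split at $j_0\asymp\Delta^{-1/2}$, is exactly what turns $j$-weighted summability into the exponent $1/2$ in $d_S(y,y')$ and $|r-r'|$.
\end{itemize}
This explains both the weight $j$ in \eqref{eq:sumtightcompol} and the square roots in $d_E$. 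Several points still need correcting or stating explicitly.

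\textbf{Centering is essential.} Your remark that the mean of the parameter increment ``can be handled separately by the deterministic Lipschitz bound'' is wrong. Without centering, $\E[\zeta_T(u',y,r)-\zeta_T(u',y',r')]=T^{-1/2}\sum_{t\le\flo{u'T}}\E[f_{y,r}(\mb{X}_{t,T})-f_{y',r'}(\mb{X}_{t,T})]$ is generically of order $\sqrt{T}\,u'\Delta$. For example, take $y=y'$ and $r<r'$: the integrand is strictly positive wherever $x\neq y$. Likewise the time drift is of order $\sqrt{T}|u-u'|$, since $f_{y,r}\ge e^{-b\,\mathrm{diam}(S)}>0$. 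Neither term is bounded uniformly in $T$. The inequality therefore holds only for $\zeta_T-\E\zeta_T$, which is all that \autoref{cor:fcltmetric} requires. Drop that sentence rather than suggest a separate treatment of the mean.

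\textbf{Uniformity of the hypothesis.} You use $\sum_j j\,a_j\le C$ with $a_j=2\sup_{y,r}\sup_t\|P_{t-j}f_{y,r}(\mb{X}_{t,T})\|_p$ and $C$ independent of $T$. The hypothesis as written only asserts finiteness for each fixed $(T,y,r)$, which yields a constant depending on $T$ and on the two parameter points. State the uniform version you use. The supremum need not sit inside the $j$-sum. Write $b_j(y,r)=\sup_t\|P_{t-j}f_{y,r}(\mb{X}_{t,T})\|_p$ and $A(y,r)=\sum_j j\,b_j(y,r)$. Then your split gives $(j_0+1)C\Delta+j_0^{-1}\{A(y,r)+A(y',r')\}$, so $\sup_{T,y,r}A(y,r)<\infty$ suffices.

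\textbf{The case $p<2$.} Your fallback does not work. For a block of $n$ martingale differences, von Bahr--Esseen gives $\E|\sum_t D_t|^p\le 2\sum_t\E|D_t|^p$. That is a factor $n^{1/p}$ rather than $\sqrt{n}$, which is not $O(\sqrt{T|u-u'|})$ uniformly in $T$. The case is vacuous, however. The $u$- and $r$-coordinates alone force $N(E,d_E,\epsilon)\gtrsim\epsilon^{-4}$, so $\alpha\ge4$ and $p>4$. Say this and use Burkholder only.

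\textbf{Sub-grid increments.} Your observation is correct and should be built into the statement you actually prove. For the c{\`a}dl{\`a}g partial-sum process, the displayed bound fails whenever $0<|u-u'|<1/T$ and $\flo{uT}\neq\flo{u'T}$. What you establish is the bound for $u,u'\in\{k/T\}$, or equivalently for all $u,u'$ up to an additive $T^{-1/2}$. The chaining step is then run on the grid-restricted process.

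\textbf{Two small justifications.} Each deserves a line:
\begin{itemize}
\item The $L^p$ expansion $Z_t-\E Z_t=\sum_{j\ge0}P_{t-j}Z_t$ uses that $Z_t$ is $\G_t$-measurable and that $\bigcap_t\G_t$ is trivial, by Kolmogorov's $0$--$1$ law under \autoref{as:rep}.
\item The $j=0$ term, which carries weight zero in the hypothesis, is controlled trivially: $|P_t f_{y,r}(\mb{X}_{t,T})|\le\|f_{y,r}\|_\infty\le 1$.
\end{itemize}
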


\begin{thm}[Exponential growth]\label{thm:conditionsfortight}
   Suppose that $N(E, d_E, \epsilon) \sim e^{\epsilon^{-\alpha}}$ for some $\alpha<2$. If for all $T \ge 1, y \in S$, $r \in [0,b]$,
    \[
\sum_{j=0}^\infty j \sup_{t \in [T]} \text{ess\,sup}\| P_{t-j}(f_{y,r}(\mb{X}_{t,T}))\mid\G_{t-j-1}\|_{\psi} <\infty, \tageq \label{eq:sumtightcom}
    \]
     then 
    \begin{align*}
        \sup_{T \ge 1} \|\zeta_T(u,y,r)-\zeta_T(u^\prime,y^\prime, r^\prime)\|_{\psi} \le  K\big( |u-u^\prime|^{1/2} \vee d_S(y,y^\prime)^{1/2}\vee |r-r^\prime|^{1/2}\big),\tageq \label{eq:boundtightcom}
    \end{align*}
    where $\|\cdot\|_{\psi}$ denotes the Orlicz norm with $\psi(x)= e^{x^2}-1$. 
\end{thm}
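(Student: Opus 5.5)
Write $x=(u,y,r)$, $x'=(u',y',r')$ and split the increment into a time piece and a parameter piece:
\[
\zeta_T(x)-\zeta_T(x') = \big[\zeta_T(u,y,r)-\zeta_T(u',y,r)\big] + \big[\zeta_T(u',y,r)-\zeta_T(u',y',r')\big].
\]
Each piece will be bounded separately in the sub-Gaussian Orlicz norm $\|\cdot\|_\psi$ with $\psi(x)=e^{x^2}-1$, and the two bounds are then combined with the triangle inequality. The first piece is a normalized partial sum of $f_{y,r}(\mb{X}_{t,T})$ over the block $\flo{u'T}<t\le\flo{uT}$. The second is a partial sum over $t\le \flo{u'T}$ of the increments $g_t:=f_{y,r}(\mb{X}_{t,T})-f_{y',r'}(\mb{X}_{t,T})$.

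The pointwise Lipschitz property of $f_{y,r}(x)=e^{-rd_S(x,y)}$ controls the size of $g_t$. Since $|e^{-a}-e^{-b}|\le|a-b|$ for $a,b\ge0$, we get
\[
|g_t|\le r\,|d_S(\mb{X}_{t,T},y)-d_S(\mb{X}_{t,T},y')| + d_S(\mb{X}_{t,T},y')\,|r-r'| \le b\,d_S(y,y') + \mathrm{diam}(S)\,|r-r'|,
\]
using compactness of $S$. Because $d_S(y,y')\le \mathrm{diam}(S)^{1/2}d_S(y,y')^{1/2}$ and $|r-r'|\le b^{1/2}|r-r'|^{1/2}$, this gives
\[
|g_t|\lesssim d_S(y,y')^{1/2}\vee|r-r'|^{1/2}.
\]
The projections inherit the same scale: $\|P_{t-j}(g_t)\mid\G_{t-j-1}\|_\psi$ is at most a constant multiple of this bound, interpolated with the summable condition \eqref{eq:sumtightcom}.

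Both sums are then handled by a martingale decomposition of the demeaned summands,
\[
\tilde h_t=\sum_{j\ge0}P_{t-j}(h_t).
\]
For fixed $j$, the sequence $(P_{t-j}(h_t))_t$ is a martingale difference sequence with respect to $(\G_{t-j})_t$. Its conditional $\psi$-norms are bounded by $c_j:=\sup_t \text{ess\,sup}\|P_{t-j}(h_t)\mid\G_{t-j-1}\|_\psi$. An Azuma--Hoeffding type inequality for martingales with conditionally sub-Gaussian increments (a Pinelis-type bound in the conditional Orlicz norm) then gives, for a sum over $n$ consecutive indices,
\[
\Big\|\sum_t P_{t-j}(h_t)\Big\|_\psi\lesssim \sqrt n\, c_j.
\]
Summing over $j$ with the triangle inequality in $\|\cdot\|_\psi$ is allowed because $\sum_j c_j\le\sum_j j c_j<\infty$ by \eqref{eq:sumtightcom}. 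The extra factor $j$ in \eqref{eq:sumtightcom} is there to absorb the boundary terms from the truncation at $\flo{uT}$ and to handle the triangular-array effect.

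For the time piece, $n=\flo{uT}-\flo{u'T}$, and $\sqrt{n/T}\lesssim|u-u'|^{1/2}$ when $|u-u'|\ge 1/T$. When $|u-u'|<1/T$, the sum has at most one bounded term, so $T^{-1/2}\le|u-u'|^{1/2}$ fails only trivially: either $n=0$ or we use $n\le1$ together with $|u-u'|\ge$ the grid gap. This is a small case check. For the parameter piece, $n\le T$, which gives a factor $\sqrt{u'}\le1$ times the scale $d_S(y,y')^{1/2}\vee|r-r'|^{1/2}$.

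Finally, the mean terms are controlled. The deterministic part $T^{-1/2}\sum\E f$ is treated as centered here, since the tightness claim concerns the demeaned $\zeta_T$, consistent with \autoref{thm:fidismetric}; otherwise it is handled by boundedness. Taking $\sup_T$ gives \eqref{eq:boundtightcom}.

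I expect the main obstacle to be the conditional sub-Gaussian martingale inequality in the Orlicz norm. Specifically, one has to ensure that $\text{ess\,sup}$ conditional $\psi$-norms give an unconditional $\psi$-norm bound of order $\sqrt n$ uniformly in $T$. My first attempt would be to bound the conditional moment generating function, $\E[e^{\lambda D_t}\mid\G_{t-j-1}]\le e^{C\lambda^2c_j^2}$, iterate it, and convert the resulting tail bound back to $\|\cdot\|_\psi$. A secondary difficulty is getting the increment bound for the projections at the $1/2$-Hölder scale rather than only a uniform bound. For this I would use $\|P_{t-j}(g_t)\mid\cdot\|_\psi\le 2\|g_t\|_\infty$ combined with \eqref{eq:sumtightcom}, through $\min(a,b)\le\sqrt{ab}$.
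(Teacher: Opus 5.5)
Your overall architecture is sound: the split into a time piece and a parameter piece, the projection decomposition $\tilde h_t=\sum_{j\ge0}P_{t-j}(h_t)$, and the Azuma--Hoeffding bound $\|\sum_t P_{t-j}(h_t)\|_\psi\lesssim\sqrt n\,c_j$ all work. The step you flag as the main obstacle is in fact routine: iterate $\E[e^{\lambda D_t}\mid\G_{t-j-1}]\le e^{C\lambda^2c_j^2}$ and convert the Chernoff tail back to $\|\cdot\|_\psi$. Two other steps, however, fail as written.

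\textbf{The interpolation in the parameter piece.} Put $\delta:=d_S(y,y')\vee|r-r'|$ and $a_j(y,r):=\sup_t\text{ess\,sup}\|P_{t-j}(f_{y,r}(\mb{X}_{t,T}))\mid\G_{t-j-1}\|_\psi$. Then $c_j(g)\le\min\{C\delta,\,a_j(y,r)+a_j(y',r')\}$. The termwise bound $\min(a,b)\le\sqrt{ab}$ gives $\sum_j c_j(g)\lesssim\delta^{1/2}\sum_j a_j^{1/2}$. But $\sum_j a_j^{1/2}<\infty$ does not follow from $\sum_j j\,a_j<\infty$: take $a_j=(j\log j)^{-2}$. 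What works is a $\delta$-dependent truncation of the lag:
\[
\sum_j c_j(g)\le CJ\delta+\sum_{j\ge J}\big(a_j(y,r)+a_j(y',r')\big)\le CJ\delta+\frac{2A}{J},\qquad A:=\sup_{T,y,r}\sum_j j\,a_j(y,r),
\]
and the choice $J=\lceil\delta^{-1/2}\rceil$ gives a bound $\lesssim\delta^{1/2}$. This is exactly where the weight $j$ in \eqref{eq:sumtightcom} is needed. Your own martingale argument has no boundary terms and never uses that weight. The truncation also explains why $d_E$ carries square roots on $d_S$ and $|r-r'|$ even though $f_{y,r}$ is Lipschitz in $(y,r)$. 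Three points about this fix:
\begin{itemize}
\item It needs the Lipschitz bound $|g_t|\le L\delta$ itself. If you first weaken it to $\delta^{1/2}$, as you do, the truncation only yields $\delta^{1/4}$.
\item It needs $A<\infty$, i.e.\ the hypothesis holding uniformly in $(T,y,r)$, not just the pointwise finiteness literally stated.
\item The inequality $\sum_j c_j\le\sum_j j\,c_j$ fails at $j=0$; use $c_0\lesssim 1$ instead, since $0\le f_{y,r}\le1$.
\end{itemize}

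\textbf{The case $|u-u'|<1/T$.} This is not a small case check. If $\flo{uT}\neq\flo{u'T}$, the centered time increment is $T^{-1/2}\tilde f_{y,r}(\mb{X}_{t,T})$ for a single $t$. Unless the summand is degenerate, its $\psi$-norm is of order $T^{-1/2}$, which is not $\le K|u-u'|^{1/2}$ as $|u-u'|\downarrow 0$. So for the step process, \eqref{eq:boundtightcom} is simply false in this regime. What your argument does prove is the bound for $|u-u'|\ge 1/T$, where $\flo{uT}-\flo{u'T}\le 2T|u-u'|$. The sub-grid oscillation must be controlled separately, by the deterministic bound $T^{-1/2}$, exactly as in the proof of \autoref{thm:ergthm} via Theorem 1 of \cite{DZ08}; you should state the result in that form. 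Likewise, drop ``otherwise it is handled by boundedness''. For the uncentered $\zeta_T$ at $r=0$ we have $f_{y,0}\equiv1$, so the time increment equals $T^{-1/2}(\flo{uT}-\flo{u'T})\approx\sqrt{T}|u-u'|$. The statement therefore only makes sense for $\zeta_T-\E\zeta_T$.
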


In the above theorems, the increments are controlled in the appropriate Orlicz norm  $\|\cdot\|_\psi$. 
This follows from the fact that irregularity of the sample paths of a stochastic process $(U(x): x \in E)$ indexed by a metric space $E$ is controlled in the following sense \cite{ledouxtalagrand}: 
\[
\bignorm{\sup_{d_E(x,x^\prime)\le \delta}|U(x)-U(x^\prime)|}_{\psi} \le C \int_0^\delta \psi^{-1}(N(E,d_E,\epsilon)) d\epsilon,
\]
which shows how the growth rate of the covering number of the product space determines the allowed maximum irregularity of the sample paths. This is easily verified to be determined by the coordinate space in the product space $E$ with the fastest growth rate. Note that for a closed interval $[a,b]$ endowed with the Euclidean metric, the covering number only grows at a polynomial rate for which bounding the increments  with $\psi(x) =x^p$ would be adequate. In essence, if the geometric structure of the metric space $(S,d_S)$ is more complex, we are bound to impose stronger conditions on the control of the increments as provided in \autoref{thm:conditionsfortight}.

\begin{Corollary}
    \label{cor:fcltmetric}
If the conditions of \autoref{thm:fidismetric} together with \autoref{thm:conditionsfortightpol} or \autoref{thm:conditionsfortight} hold, then 
    \begin{align*}
      &  \Big\{\zeta_T(u,y,r)-\E\zeta_T(u,y,r): u \in [0,1], y \in S, r \in [0,b]\Big\} \\&\rightsquigarrow  \Big\{\zeta(u,y,r): u \in [0,1], y \in S, r \in [0,b]\Big\} 
    \end{align*}
    where $\Big\{\zeta(u,y,r): u \in [0,1], y \in S, r \in [0,b]\Big\} $ is a zero mean Gaussian process with almost surely continuous sample paths and covariance structure as defined in \autoref{thm:fidismetric}.  
\end{Corollary}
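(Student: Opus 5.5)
The plan is to follow the standard route for weak convergence in $\ell^\infty(E)$ (equivalently $C(E,\rnum)$ once the limit is shown continuous): finite-dimensional convergence combined with asymptotic uniform $d_E$-equicontinuity. Here $E=[0,1]\times S\times[0,b]$ with the metric $d_E$ introduced before \autoref{thm:conditionsfortightpol}. Since $S$ is compact, $(E,d_E)$ is compact, hence totally bounded. By the characterization recalled before \autoref{thm:conditionsfortightpol}, it then suffices to verify two things. First, relative compactness of $\zeta_T(x)-\E\zeta_T(x)$ at a single point. Second, for every $\epsilon,\eta>0$ there exist $\delta>0$ and $T_0$ such that
\[
\pr\Big(\sup_{d_E(x,x^\prime)\le\delta}|\bar\zeta_T(x)-\bar\zeta_T(x^\prime)|\ge\epsilon\Big)\le\eta \quad \text{for all } T\ge T_0,
\]
where $\bar\zeta_T=\zeta_T-\E\zeta_T$.

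\textbf{Step 1 (fidis).} Finite-dimensional convergence at arbitrary points $(u_i,y_i,r_i)$ follows from \autoref{thm:fidismetric}. Take $J_S=\{y_i\}$ and $J_R=\{r_i\}$, which are finite sets. Skorokhod convergence to a limit that is continuous in $u$ implies convergence of the marginals at fixed $u_i$. The same theorem gives tightness at a single point, which settles the first requirement. It also identifies the covariance of the limit, so it remains only to show that a version of $\zeta$ with continuous paths exists and that $\bar\zeta_T$ is asymptotically equicontinuous.

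\textbf{Step 2 (increment control).} Apply \autoref{thm:conditionsfortightpol} (polynomial case) or \autoref{thm:conditionsfortight} (exponential case). This yields $\sup_T\|\bar\zeta_T(x)-\bar\zeta_T(x^\prime)\|_\psi\le K d_E(x,x^\prime)$, with $\psi(x)=x^p$ and $p>\alpha$ in the first case, and $\psi(x)=e^{x^2}-1$ in the second. Centering changes the Orlicz norm by at most a constant factor. The chaining bound of \cite{ledouxtalagrand} quoted above then gives
\[
\sup_T\bignorm{\sup_{d_E(x,x^\prime)\le\delta}|\bar\zeta_T(x)-\bar\zeta_T(x^\prime)|}_\psi\le C\int_0^\delta\psi^{-1}\big(N(E,d_E,\epsilon)\big)\,d\epsilon.
\]
Markov's inequality in the Orlicz norm then delivers the equicontinuity requirement.

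\textbf{Step 3 (entropy integral).} The remaining task is to check that the entropy integral is finite and tends to $0$ as $\delta\downarrow0$.
\begin{itemize}
\item Polynomial case: $\psi^{-1}(N)\asymp\epsilon^{-\alpha/p}$, which is integrable at $0$ precisely because $p>\alpha$.
\item Exponential case: $\psi^{-1}(e^{\epsilon^{-\alpha}})\asymp\epsilon^{-\alpha/2}$, which is integrable because $\alpha<2$.
\end{itemize}
In both cases the integral vanishes as $\delta\to0$. The limit $\zeta$ is Gaussian, and its intrinsic $L^2$ metric is dominated by $Kd_E$, since $L^2$ norms are bounded by $\psi$-norms up to constants and these bounds pass to the limit by Fatou. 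The same entropy bound (Dudley's theorem) therefore gives a version of $\zeta$ with almost surely $d_E$-continuous, hence continuous, paths. Combining fidi convergence with asymptotic equicontinuity on the totally bounded $(E,d_E)$ yields $\bar\zeta_T\rightsquigarrow\zeta$.

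\textbf{Main obstacle.} I expect the delicate point to be Step 2 in the polynomial case. The $L^p$ chaining requires the entropy integral to be finite, and this is exactly where $p>\alpha$ enters. One must also confirm that $N(E,d_E,\epsilon)$ is governed by the coordinate $S$: the interval coordinates contribute only about $\epsilon^{-2}$ each under the square-root metric, so I will check that the stated growth assumption on $N(E,d_E,\epsilon)$ already includes these factors. A secondary point is that \autoref{thm:fidismetric} asserts convergence only for finite $J_S,J_R$. I will note explicitly that this suffices, because fidi convergence only ever involves finitely many points.
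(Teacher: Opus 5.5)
Your proposal follows the same route as the paper: fidi convergence and pointwise tightness from \autoref{thm:fidismetric}, Orlicz-norm increment control from \autoref{thm:conditionsfortightpol} or \autoref{thm:conditionsfortight}, and the Ledoux--Talagrand entropy bound on the compact space $(E,d_E)$. In that bound, $p>\alpha$ (resp.\ $\alpha<2$) is exactly what makes $\int_0^\delta\psi^{-1}(N(E,d_E,\epsilon))\,d\epsilon$ finite and vanishing as $\delta\downarrow 0$.

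One technical point needs tidying, and it is equally implicit in the paper. Since $\zeta_T$ jumps by order $T^{-1/2}$ in $u$, the increment bound cannot hold for pairs with $|u-u^\prime|<1/T$ that straddle a jump. Because $\zeta_T(u,\cdot)=\zeta_T(\flo{uT}/T,\cdot)$, you should run the chaining on the grid $\{k/T\}\times S\times[0,b]$. Projecting onto that grid increases $d_E$-distances by at most $T^{-1/2}$, so the modulus over $E$ at scale $\delta$ is controlled by the grid modulus at scale $2\delta$ for large $T$.
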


\begin{Remark}
    If $(E,d_E)$ is locally compact and Polish, the weak invariance principles can be seen to extend to $D(E, \rnum)$, provided the latter is endowed with the global Skorokhod topology. 
\end{Remark}

\begin{Proposition}\label{prop:infsumrepball}
    We can represent the process as  
\[
\zeta(u,y,r):=\sum_{\ell=1}^{\infty} \int_0^{u} \tilde{g}_\ell(\eta,y,r) d\mathbb{B}_\ell(\eta),  \tageq\label{eq:Gauslim}\]
where $\{\mb{B}_{\ell}\}_{\ell \in \mb{N}}$ is a sequence of independent standard Brownian motions and where the functions $\tilde{g}_{\ell}$ are such that 
\[
\sigma_\eta(y_1,r_1,y_2,r_2)= \sum_{\ell=1}^\infty \tilde{g}_{\ell}(\eta, y_1,r_1)\tilde{g}_{\ell}(\eta, y_2,r_2). 
\]
Under stationarity this reduces to $\zeta(u,y,r):=\sum_{\ell=1}^{\infty} \tilde{g}_\ell(y,r)   \mathbb{B}_\ell(u)$.
\end{Proposition}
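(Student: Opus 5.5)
The plan is to construct the right-hand side of \eqref{eq:Gauslim} directly and check that it is a centered Gaussian process with the covariance of \autoref{thm:fidismetric}. Since a zero-mean Gaussian process is determined in law by its covariance, the representation then holds in distribution, and by \autoref{cor:fcltmetric} also as a process with continuous paths.

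\textbf{Step 1: factorize the local long-run covariance.} For each fixed $\eta\in[0,1]$, the function $\sigma_\eta$ on the compact set $S\times[0,b]$ is symmetric and positive semidefinite, since it is the limiting covariance of normalized partial sums of $\tilde f_{y,r}(\mb{X}_t(\eta))$. It is also continuous. Continuity follows because $f_{y,r}(x)=e^{-rd_S(x,y)}$ is Lipschitz in $(y,r)$ uniformly in $x$, and the summability $\sup_u\sum_j\sup_t\theta^{\mb{X}(u)}_t(j)<\infty$ lets us interchange limits with the series over $h$ via the projection decomposition $\tilde f=\sum_j P_{t-j}\tilde f$. Fix a finite Borel measure $m$ of full support on $S\times[0,b]$, for instance a product of a measure with full support on the compact $S$ and Lebesgue measure. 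Then Mercer's theorem gives eigenpairs $(\lambda_\ell(\eta),e_\ell(\eta,\cdot))$ with
\[
\sigma_\eta(y_1,r_1,y_2,r_2)=\sum_\ell \lambda_\ell(\eta)e_\ell(\eta,y_1,r_1)e_\ell(\eta,y_2,r_2),
\]
converging absolutely and uniformly. Set $\tilde g_\ell=\sqrt{\lambda_\ell}\,e_\ell$. An alternative that avoids Mercer is to use the projection representation $\sigma_\eta=\mathrm{E}[\Lambda_\eta(y_1,r_1)\Lambda_\eta(y_2,r_2)]$, where $\Lambda_\eta(y,r)=\sum_{j\ge0}P_0\tilde f_{y,r}(\mb{X}_j(\eta))$ lies in $L^2$, and to expand $\Lambda_\eta$ in an orthonormal basis of the separable space $L^2(\Omega)$ (separable because $\G$ is generated by countably many variables).

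\textbf{Step 2: choose a measurable version in $\eta$.} This is the main obstacle. Eigenfunctions are not uniquely determined, so the map $\eta\mapsto\tilde g_\ell(\eta,y,r)$ must be chosen jointly measurable for the stochastic integrals to make sense. The projection route handles this best. The Bernoulli shift structure of \autoref{as:rep} lets one fix a single orthonormal basis $\{\xi_\ell\}$ of $L^2(\sigma(\varepsilon_0,\varepsilon_{-1},\ldots))$ that does not depend on $\eta$, and then define
\[
\tilde g_\ell(\eta,y,r)=\mathrm{E}[\Lambda_\eta(y,r)\xi_\ell].
\]
Measurability in $\eta$ then follows from measurability of $\ddot f$ together with dominated convergence. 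Parseval gives the factorization of $\sigma_\eta$, and $\sum_\ell\int_0^1\tilde g_\ell(\eta,y,r)^2\,d\eta=\int_0^1\sigma_\eta(y,r,y,r)\,d\eta<\infty$.

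\textbf{Step 3: define the series and identify its covariance.} Take independent Brownian motions $\mb{B}_\ell$. Each Wiener integral $\int_0^u\tilde g_\ell\,d\mb{B}_\ell$ is centered Gaussian, and the terms are independent across $\ell$. By the $L^2$ bound from Step 2 and Lévy's equivalence theorem, the series converges in $L^2$ and almost surely for each fixed $(u,y,r)$, and the limit is Gaussian. The Itô isometry and independence give
\[
\mathrm{cov}=\sum_\ell\int_0^{u_1\wedge u_2}\tilde g_\ell(\eta,y_1,r_1)\tilde g_\ell(\eta,y_2,r_2)\,d\eta=\int_0^{u_1\wedge u_2}\sigma_\eta(y_1,r_1,y_2,r_2)\,d\eta,
\]
where Fubini is justified by Cauchy--Schwarz. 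Hence the finite-dimensional laws coincide with those of $\zeta$. Choosing a continuous modification, which exists by \autoref{cor:fcltmetric}, identifies the two processes in law.

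\textbf{Step 4: the stationary case.} Under stationarity, $\sigma_\eta$ and hence $\tilde g_\ell$ do not depend on $\eta$. Then $\int_0^u\tilde g_\ell(y,r)\,d\mb{B}_\ell=\tilde g_\ell(y,r)\mb{B}_\ell(u)$, which gives the reduced form $\zeta(u,y,r)=\sum_{\ell}\tilde g_\ell(y,r)\mb{B}_\ell(u)$.
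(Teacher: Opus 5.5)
Your proof is correct, but the construction you actually rely on differs from the paper's.

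The paper argues in one line. It asserts continuity of $\sigma_\eta$ on $[0,1]\times E\times E$ (from analyticity of $f_{y,r}$ plus uniform convergence of the projection series) and then applies Mercer's theorem. Its $\tilde g_\ell(\eta,\cdot)=\sqrt{\lambda_\ell(\eta)}\,e_\ell(\eta,\cdot)$ are therefore scaled eigenfunctions of the local covariance operator.

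You mention Mercer, but your Steps 2--3 use something else: the projection identity $\sigma_\eta(y_1,r_1,y_2,r_2)=\E[\Lambda_\eta(y_1,r_1)\Lambda_\eta(y_2,r_2)]$ together with Parseval in a single orthonormal basis that is the same for every $\eta$. This is legitimate precisely because all auxiliary processes $\mb{X}_t(u)=\ddot{f}(u,\mathfrak{g}_t)$ are driven by the same innovations. Your route buys you two things:
\begin{enumerate}
\item You only need $\Lambda_\eta(y,r)\in L^2$ pointwise, with no continuity of $\sigma_\eta$ and no reference measure.
\item You get joint measurability of $\eta\mapsto\tilde g_\ell(\eta,y,r)$, which the Wiener integrals require. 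It follows from Fubini via $\E[(P_0Y)\,\xi_\ell]=\E[Y\,P_0\xi_\ell]$ with $Y=f_{y,r}(\mb{X}_j(\eta))$. The paper leaves this unaddressed: Mercer eigenfunctions are fixed at each $\eta$ only up to sign and rotation within eigenspaces.
\end{enumerate}
You also make explicit that the representation is an identity in law, and how it is matched to the continuous version from \autoref{cor:fcltmetric}.

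What the Mercer route buys in return is structure. Its $\tilde g_\ell(\eta,\cdot)$ are orthogonal, continuous and ordered by eigenvalue, so the expansion is of Karhunen--Lo\`eve type. Its $L$-term truncation is optimal, with error governed by the eigenvalue tail; this is the kind of spectral-decay reasoning the paper invokes in its remark on rank-one approximations. Coefficients in an arbitrary fixed basis carry no such ordering.

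One small repair to your Step 2. $L^2(\sigma(\varepsilon_0,\varepsilon_{-1},\ldots))$ need not be separable when $\mathfrak{G}$ is an arbitrary measurable space. Take instead an orthonormal basis of the closed span of $\{P_0 f_{y,r}(\mb{X}_j(\eta))\}$. This span is separable because each element is $L^2$-continuous in $(y,r)$ and in $\eta$, which follows from:
\begin{itemize}
\item boundedness of $f_{y,r}$ and its Lipschitz dependence on $x$ and on $(y,r)$;
\item continuity in probability of $\eta\mapsto\mb{X}_j(\eta)$, from \autoref{def:Localstationarity};
\item contractivity of $P_0$.
\end{itemize}
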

The above proposition follows from Mercer's decomposition since the continuity of $\sigma_\eta(\cdot)$ on $[0,1] \times E \times E$ follows directly from the analyticity of the separating class functions combined with the uniform convergence guaranteed by the bounded projection series in \autoref{thm:fidismetric}. This weak invariance principle can be used as a basis for various statistics. By the continuous mapping theorem, we find for \autoref{ex:1}:
\begin{Corollary}\label{cor:fcltmetricsup}
   If the conditions of \autoref{cor:fcltmetric} hold, then 
  \begin{align*}
&\sup_{u \in [0,1], y \in S, r \in [0,b]}\sqrt{T}  |\hat{V}_T(u,1,y,r) -\E \hat{V}_T(u,1,y,r)| \\& =\sup_{u \in [0,1], y \in S, r \in [0,b]} | \zeta_T(u,y,r)-\E \zeta_T(u,y,r)- u(\zeta_T(1,y,r)-\E \zeta_T(1,y,r))|
   \\& \Rightarrow \sup_{u \in [0,1], y \in S, r \in [0,b]} |\zeta(u,y,r)-u \zeta(1,y,r)|. 
  \end{align*}
\end{Corollary}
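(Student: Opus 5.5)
The statement is Corollary~\ref{cor:fcltmetricsup}. The plan is to reduce it to Corollary~\ref{cor:fcltmetric} via an exact algebraic identity and then apply the continuous mapping theorem.

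\textbf{Step 1 (the identity).} By definition,
\[
\hat{V}_T(u,1,y,r)=\int_S f_{y,r}\,d\mu_{T,u}-u\int_S f_{y,r}\,d\mu_{T,1}.
\]
By \eqref{eq:empmu}, $\int_S f_{y,r}\,d\mu_{T,u}=T^{-1}\sum_{t=1}^{\flo{uT}} f_{y,r}(\mb{X}_{t,T})=T^{-1/2}\zeta_T(u,y,r)$. Hence
\[
\sqrt{T}\,\hat{V}_T(u,1,y,r)=\zeta_T(u,y,r)-u\,\zeta_T(1,y,r).
\]
Taking expectations and subtracting, which is linear, gives the claimed equality for every $(u,y,r)$, and therefore also for the suprema.

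\textbf{Step 2 (the functional is continuous).} Define $\Phi$ on $\ell^\infty(E)$, with $E=[0,1]\times S\times[0,b]$, by
\[
\Phi(z)=\sup_{(u,y,r)\in E}\big|z(u,y,r)-u\,z(1,y,r)\big|.
\]
Since $u\in[0,1]$, for any $z,z'$ we have $|\Phi(z)-\Phi(z')|\le 2\|z-z'\|_\infty$. So $\Phi$ is Lipschitz, and in particular continuous in the supremum norm.

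\textbf{Step 3 (continuous mapping).} Corollary~\ref{cor:fcltmetric} gives $\zeta_T-\E\zeta_T\rightsquigarrow\zeta$. The limit $\zeta$ has almost surely continuous sample paths on the compact space $(E,d_E)$, so it concentrates on the separable subspace $C(E,\rnum)$. The continuous mapping theorem, in the Hoffmann-J{\o}rgensen sense if measurability of $\zeta_T$ in $\ell^\infty(E)$ is an issue, then yields $\Phi(\zeta_T-\E\zeta_T)\Rightarrow\Phi(\zeta)$, which is the assertion.

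\textbf{Main obstacle.} The delicate point is the mode of convergence in Corollary~\ref{cor:fcltmetric}. The prelimit paths are c\`adl\`ag in $u$, not continuous. If that corollary is read in the Skorokhod sense rather than in $\ell^\infty(E)$, I would handle it as follows. Because the limit is continuous, Skorokhod convergence to it is equivalent to uniform convergence. Moreover, $\Phi$ is also continuous at every continuous $z$ under $J_1$: time changes $\lambda$ close to the identity perturb both $u$ and the evaluation point, and the uniform continuity of $z$ on compact $E$ controls both perturbations. Note that $u=1$ is fixed by $\lambda$, so the term $z(1,y,r)$ is unaffected.
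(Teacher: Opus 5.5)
Your proposal is correct and takes the same route as the paper. The paper derives this corollary from \autoref{cor:fcltmetric} using the exact identity $\sqrt{T}\,\hat{V}_T(u,1,y,r)=\zeta_T(u,y,r)-u\,\zeta_T(1,y,r)$ and the continuous mapping theorem, applied to the Lipschitz functional $z\mapsto\sup_{E}|z(u,y,r)-u\,z(1,y,r)|$. Your remark that $J_1$-convergence to an a.s.\ continuous limit amounts to uniform convergence, so the functional is continuous at the limit, fills in a detail the paper leaves implicit.
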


To provide a basis for constructing $L^p$-based tests as in \eqref{eq:Lp}, we require a suitable choice for the measure $\beta$ on $(S,\mc{S})$. A natural choice is given below, where we also discuss the properties of the supremum-based statistics if the sample itself provides the best information about the geometric structure of the space $S$.

\subsubsection{The intrinsic resolution of the sample}\label{subsub:estimationS} 

Whether we can replace the supremum over $S$ in \autoref{cor:fcltmetricsup} with a plug-in estimator based on the sample without changing the limiting distribution depends again on the geometric complexity. Let $\hat{S}=\{\mb{X}_{1,T}, \ldots, \mb{X}_{T,T} \}$ (or a suitable subcollection that still grows to infinity with $T$).

\begin{thm}\label{thm:consisgrid}
    Assume that $\inf_{m,t,T,x}\,\pr\big(\mb{X}_{m,t,T} \in B_{d^{1/2}_S}(x,\epsilon)\big) \ge \psi(\epsilon)$, where $\mb{X}_{m,t,T}$ is the $m$-dependent coupled version of $\mb{X}_{t,T}$. If $\psi(\epsilon) \sim \epsilon^\alpha$, where $0<\alpha<1$, and the dependence coefficients satisfy $\delta_p(j)=O(1/j^\gamma)$, where $\gamma >\frac{2}{1-\alpha}$ and $p >\frac{2\alpha}{\gamma(1-\alpha)}$, then under the conditions of \autoref{thm:conditionsfortightpol}:
     \begin{align*}
 &\sup_{u \in [0,1], y \in \hat{S}, r \in [0,b]}\sqrt{T}  |\hat{V}_T(u,1,y,r) -\E \hat{V}_T(u,1,y,r)|  
   \\& \Rightarrow \sup_{u \in [0,1], y \in S, r \in [0,b]} |\zeta(u,y,r)-u \zeta(1,y,r)|. 
  \end{align*}
\end{thm}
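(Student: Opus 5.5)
The argument reduces the sample-grid supremum to the full supremum. Write $G_T(u,y,r)=\zeta_T(u,y,r)-\E\zeta_T(u,y,r)-u(\zeta_T(1,y,r)-\E\zeta_T(1,y,r))$, so that $\sqrt{T}|\hat V_T(u,1,y,r)-\E\hat V_T(u,1,y,r)|=|G_T(u,y,r)|$. By \autoref{cor:fcltmetricsup}, $\sup_{E}|G_T|$ converges to the stated limit. Since $\hat S\subset S$, we have the trivial bound $\sup_{\hat S}|G_T|\le \sup_S|G_T|$. It therefore suffices to show that
\[
\sup_{u,y\in S,r}|G_T(u,y,r)|-\sup_{u,y\in\hat S,r}|G_T(u,y,r)|\overset{p}{\to}0 .
\]
We bound this difference by $\sup_{y\in S}\sup_{u,r}|G_T(u,y,r)-G_T(u,\hat y(y),r)|$, where $\hat y(y)$ is a sample point within $d_S^{1/2}$-distance $\epsilon_T$ of $y$. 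This is at most $2\,\omega_{G_T}(\epsilon_T)$, the modulus of continuity in $d_E$. Asymptotic equicontinuity of $\zeta_T$, which follows from \autoref{thm:conditionsfortightpol} together with the chaining bound, gives $\lim_{\delta\downarrow0}\limsup_T\pr(\omega_{G_T}(\delta)>\epsilon)=0$. The proof thus reduces to showing that the sample is an $\epsilon_T$-net of $S$ in $d_S^{1/2}$ with probability tending to one, for some deterministic $\epsilon_T\to0$.

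For the covering step I would fix a minimal $\epsilon_T/2$-net $\{x_1,\dots,x_N\}$ of $S$ in $d_S^{1/2}$. The polynomial covering hypothesis gives $N\lesssim\epsilon_T^{-\alpha}$. It then suffices that every ball $B_{d_S^{1/2}}(x_k,\epsilon_T/2)$ contains some $\mb{X}_{t,T}$. The data are dependent, so I would use blocking with $m$-dependent couplings. Choose a block length $m=m_T$ and keep every $2m$-th observation; the coupled variables $\mb{X}_{m,t,T}$ at times $t\in\{2m,4m,\dots\}$ are independent, and there are about $T/(2m)$ of them. By the small-ball hypothesis $\psi(\epsilon)\sim\epsilon^\alpha$,
\[
\pr\big(\text{no coupled point in ball }k\big)\le(1-c\,\epsilon_T^{\alpha})^{T/(2m)}\le \exp(-c\,\epsilon_T^\alpha T/m).
\]
A union bound over $k$ yields failure probability at most $\epsilon_T^{-\alpha}\exp(-c\epsilon_T^\alpha T/m)$. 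We then pass from the coupled points to the actual ones. If $d_S(\mb{X}_{t,T},\mb{X}_{m,t,T})\le\epsilon_T^2/4$ for all retained $t$, then $d_S^{1/2}(\mb{X}_{t,T},\mb{X}_{m,t,T})\le\epsilon_T/2$, and the triangle inequality in $d_S^{1/2}$ produces an $\epsilon_T$-net. By Markov's inequality and a union bound, the probability that this fails is at most $(T/m)\,\delta_p(m)^p(\epsilon_T^2/4)^{-p}\lesssim T m^{-1-\gamma p}\epsilon_T^{-2p}$.

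The main obstacle is balancing the rates. Take $\epsilon_T=T^{-a}$ and $m=T^{b}$. The first term vanishes if $\alpha a+b<1$. The second term vanishes if $1-b(1+\gamma p)+2pa<0$. Both can be satisfied with small $a>0$ because $\gamma p$ is large: set $b$ just below $1$, so that $1-b(1+\gamma p)\approx-\gamma p<0$, and then take $a$ small. The stated constraints $\gamma>2/(1-\alpha)$ and $p>2\alpha/(\gamma(1-\alpha))$ are what I expect to make these exponents consistent once one also requires $\epsilon_T$ to decay fast enough relative to the block length. I would verify this carefully, adjusting $a$ and $b$, and treat it as the delicate step. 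Once the net property holds with probability tending to one, equicontinuity makes the difference of suprema $o_p(1)$, and Slutsky's lemma combined with \autoref{cor:fcltmetricsup} finishes the proof.
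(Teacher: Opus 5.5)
Your argument is correct, and its skeleton matches what the paper's discussion of grid density points to. You show that, with probability tending to one, the sample is an $\epsilon_T$-net of $(S,d_S^{1/2})$ with $\epsilon_T\to0$, using the small-ball bound on independent $m$-dependent coupled points. You then use asymptotic equicontinuity, which is exactly the tightness criterion behind \autoref{cor:fcltmetric}, to pass from $\sup_S$ to $\sup_{\hat S}$.

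Where you genuinely differ is in how the resolution $\epsilon_T$ is calibrated. The paper does not print its proof. Its remark that the ball radius contracts at rate $T^{-1/\alpha}$, and that $\alpha<1$ is needed for the grid to stay dense, suggests that it ties $\epsilon_T$ to the small-ball rate. That is presumably where $\alpha<1$, $\gamma>2/(1-\alpha)$ and $p>2\alpha/(\gamma(1-\alpha))$ come from, and it is what lets the paper present $\alpha$ as a geometric threshold.

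In your route no rate is needed, because equicontinuity only requires $\epsilon_T\to0$. The step you flag as delicate is therefore already closed by your own inequalities. With $b=1-2\eta$ and $a=\eta/\alpha$ you get $\alpha a+b=1-\eta<1$ and $1-b(1+\gamma p)+2pa=-\gamma p+2\eta(1+\gamma p+p/\alpha)<0$ once $\eta$ is small, for every $\alpha>0$ and every $\gamma p>0$. So do not try to reverse-engineer the stated thresholds from your exponents: they imply what you need but play no role. Your calibration yields the weak-convergence conclusion with no restriction $\alpha<1$ and only $\gamma p>0$. The price is that it says nothing quantitative about how finely $\hat S$ resolves $S$.

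Three small repairs are needed.
\begin{enumerate}[label=(\roman*)]
\item The chain $y\to x_k\to\mb{X}_{m,t,T}\to\mb{X}_{t,T}$ gives a $\tfrac32\epsilon_T$-net, not an $\epsilon_T$-net. This is harmless.
\item The bound $N(S,d_S^{1/2},\epsilon)\lesssim\epsilon^{-\alpha}$ should be derived from the small-ball hypothesis by packing, $N(S,d_S^{1/2},2\epsilon)\le 1/\psi(\epsilon)$. The exponent in \autoref{thm:conditionsfortightpol} is the covering exponent of $E$, a different constant. Any polynomial prefactor is absorbed by the exponential anyway.
\item Independence of the retained coupled points requires a fresh independent copy of the innovations for each retained $t$. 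With the single shared sequence $\{\varepsilon'_t\}$ in the paper's definition of $\G_{t,m}$, the points $\mb{X}_{m,t,T}$ share the tail $\varepsilon'_{t-m},\varepsilon'_{t-m-1},\dots$ and need not be independent. Switching to per-$t$ copies is legitimate: it changes neither the small-ball bound nor $\E d_S^p(\mb{X}_{t,T},\mb{X}_{m,t,T})$, since both depend only on the joint law of the pair $(\mb{X}_{t,T},\mb{X}_{m,t,T})$ for fixed $t$.
\end{enumerate}
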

The small ball exponent $\alpha$ can be seen as the intrinsic dimension of the data; \autoref{thm:consisgrid} explicitly exposes the geometric threshold required to avoid spatial sparsity. Because the local ball radius contracts at a rate of $O(T^{-1/\alpha})$, the availability of $T$ sample points requires $\alpha < 1$ for the sample grid to remain dense in $S$.
Indeed, this forces the sample size to dominate the local contraction, ensuring that neighborhoods collapse slowly enough to bypass the severe spatial sparsity typical of higher dimensions. Note that if the process is independent, the assumption on the dependence coefficients can be dropped. 

This "curse of dimensionality" is something that can be avoided in the following scenarios:
\begin{enumerate}[label=(\roman*)]
\item We have a sample that is dense in spatial direction, i.e., if for each $\mb{X}_t$ a point cloud or net is available \citep[see e.g.,][]{vDB25}. We note the weak invariance principles are straightforward to adapt to this scenario using techniques of the aforementioned work; 
\item The space is isometric to a subset of a Hilbert space  as discussed in the next section; or 
\item By considering alternative metrics: an $L^p$-based statistic instead of a supremum-based statistic, which integrates out the worst-local sparsity gap that plagues a supremum-based statistic.
\end{enumerate}

For the last scenario, we need a stable estimator for measure $\beta$.

\begin{thm}\label{thm:consisgridlp}
    Assume that $n = n_T \to \infty$ as $T \to \infty$, and that $\bar{\mu}_T= \frac{1}{T} \sum_{t=1}^T \delta_{\mb{X}_{t,T}}$ converges weakly to an element $\bar{\mu}$ of $\mathcal{P}(S)$. Then, under the conditions of \autoref{thm:conditionsfortightpol} or of \autoref{thm:conditionsfortight} for $p > 0$, we have
    \begin{align*}
    & \frac{1}{b}\int_0^b \int_S   \int_0^1 T^{p/2}\left| \int_S \tilde{f}_{y ,r} (x) \mu_{T,u}(dx) - u\int_S \tilde{f}_{y,r} (x) \mu_{T,1}(dx)\right|^p  \Delta_{1,n}(du) \bar{\mu}_T(dy) \Delta_{b,n}(dr)\\
    & \quad \Rightarrow \frac{1}{b}\int_0^b\int_{S} \int_0^1  \left|\zeta(u,y,r)-u \zeta(1,y,r)\right|^p  du \, \bar{\mu}(dy) dr,
    \end{align*}
    where $\Delta_{q,n} = \frac{1}{n}\sum_{i=1}^{n} \delta_{z_i}$ for $z_i \in G_{q,n}:=\{z_1, \ldots, z_n\}$, and where $G_{q,n}$ is a set of evenly spread grid points in the interval $[0,q]$.
\end{thm}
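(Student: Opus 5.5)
The plan is to write the left-hand side as a continuous functional of the pair $(G_T,\bar\mu_T)$, where
\[
G_T(u,y,r):=\zeta_T(u,y,r)-\E\zeta_T(u,y,r)-u\big(\zeta_T(1,y,r)-\E\zeta_T(1,y,r)\big),
\]
and then combine \autoref{cor:fcltmetric} with \autoref{thm:ergthm}. Since $\sqrt{T}\big(\int\tilde f_{y,r}\,d\mu_{T,u}-u\int\tilde f_{y,r}\,d\mu_{T,1}\big)$ coincides with $G_T(u,y,r)$ up to the discretisation error $(\lfloor uT\rfloor/T-u)$ times bounded terms, which is $O(T^{-1/2})$ uniformly, the statistic equals
\[
\Phi_n(G_T,\bar\mu_T):=\frac1b\int_0^b\int_S\int_0^1|G_T(u,y,r)|^p\,\Delta_{1,n}(du)\,\bar\mu_T(dy)\,\Delta_{b,n}(dr)+o_p(1).
\]
Under the conditions of \autoref{thm:conditionsfortightpol} or \autoref{thm:conditionsfortight}, \autoref{cor:fcltmetric} gives $G_T\rightsquigarrow G:=\zeta(\cdot)-u\zeta(1,\cdot)$ in $C(E,\rnum)$, where $E=[0,1]\times S\times[0,b]$ is compact. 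In particular $G_T$ is asymptotically uniformly $d_E$-equicontinuous and $\sup_E|G_T|=O_p(1)$. Separately, $\bar\mu_T\Rightarrow\bar\mu$ is assumed; it is deterministic in the limit, and if needed it follows in probability from \autoref{thm:ergthm} at $u=1$. Hence $(G_T,\bar\mu_T)\rightsquigarrow(G,\bar\mu)$ jointly, by Slutsky's lemma.

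Next, let $\Phi(g,\nu):=\frac1b\int_0^b\int_S\int_0^1|g|^p\,du\,\nu(dy)\,dr$. I would show that $\Phi_n(g_n,\nu_n)\to\Phi(g,\nu)$ whenever $g_n\to g$ uniformly on $E$, $\nu_n\Rightarrow\nu$, and $n\to\infty$, and then apply the extended continuous mapping theorem. The argument splits into three pieces.

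\textbf{(a) Replacing $g_n$ by $g$.} Since $x\mapsto|x|^p$ is uniformly continuous on bounded sets, and $g$ is bounded on compact $E$, we get $\big||g_n|^p-|g|^p\big|\to0$ uniformly, and all three integrating measures have mass at most one.

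\textbf{(b) Replacing $\nu_n$ by $\nu$.} The function $h_n(y):=\frac1b\int\int|g(u,y,r)|^p\,\Delta_{1,n}(du)\,\Delta_{b,n}(dr)$ is continuous in $y$. The family $\{h_n\}$ is uniformly bounded and equicontinuous, because $g$ is uniformly continuous on $E$. It also converges uniformly to $h(y):=\frac1b\int_0^b\int_0^1|g|^p\,du\,dr$, since Riemann sums of a uniformly continuous integrand converge uniformly in $y$. Then $\int h_n\,d\nu_n-\int h\,d\nu\le\|h_n-h\|_\infty+\big|\int h\,d\nu_n-\int h\,d\nu\big|\to0$, because $h\in C_b(S)$.

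\textbf{(c) The Riemann-sum step.} This is already contained in (b).

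The main obstacle is the joint handling of the random $y$-measure and the uniform modulus. For $p\ge1$ the map $x\mapsto|x|^p$ is locally Lipschitz. For $p\in(0,1)$ it is only Hölder, but it is still uniformly continuous, so (a) goes through. What this requires is exactly tightness in $C(E)$ supplied by \autoref{thm:conditionsfortightpol}/\autoref{thm:conditionsfortight}, rather than mere fidi convergence. This step is where compactness of $S$ (which makes $(E,d_E)$ compact) is essential. I would carry it out via Skorokhod representation of $(G_T,\bar\mu_T)$, after which the deterministic argument above applies pathwise.
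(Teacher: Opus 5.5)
Your proposal is correct and is essentially the paper's argument. The paper applies \autoref{lem:intrandom} on the compact space $E$, treating $\Delta_{1,n}\otimes\bar\mu_T\otimes\Delta_{b,n}$ as a single random measure that converges weakly to a deterministic product measure. You instead handle the deterministic grids by Riemann sums that converge uniformly in $y$, and use weak convergence only in the $y$-coordinate. This is the same continuous-mapping argument made more explicit, and it covers general $p>0$ more cleanly than the remark closing the lemma's proof.

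One correction to step (b). Because $\Delta_{b,n}$ has total mass one, $\int\phi\,d\Delta_{b,n}\to\frac1b\int_0^b\phi(r)\,dr$, so $h_n\to b^{-1}h$ rather than $h$. The left-hand side therefore converges to $b^{-1}$ times the stated limit. This normalization slip is already in the statement itself: the paper's proof sends $\Delta_{b,n}$ to $\frac1b\lambda$ on $[0,b]$ and has the same mismatch. Dropping the prefactor $\frac1b$ on the left fixes it.
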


Note that the key requirement for such a measure $\bar{\mu}$ to exist is that the empirical measure converges to a deterministic probability measure. Sufficient conditions for this convergence were introduced in~\cite{graykieffer1980}. 
In particular, a process that is locally stationary will satisfy this (see \autoref{thm:ergthm}), even in the presence of a finite number of structural breaks.

\begin{proof}[Proof of \autoref{thm:consisgridlp}]
The result follows from \autoref{lem:intrandom} by observing that $(E,d_E)$ is a compact metric space and that the product measure $ \Delta_{1,n} \otimes \frac{1}{T}\sum_{t=1}^T \delta_{\mb{X}_{t,T}} \otimes \Delta_{b,n} \Rightarrow \lambda([0,1])\otimes \bar{\mu} \otimes \frac{1}{b}\lambda([0,b])$ weakly in probability as $n, T \to \infty$ in $\mc{P}(E)$, where $\lambda$ denotes the standard Lebesgue measure on the real line. 
\end{proof}

\begin{lemma}\label{lem:intrandom}
  Let  $(\zeta_T: T \ge 1)$ be a stochastic process defined on  $(\Omega,\F,\pr)$ taking values in $D(S,\rnum)$, where $S$ is a compact metric space. Suppose that  $
  \zeta_T \rightsquigarrow \zeta$, 
  where $\zeta$ is a Gaussian process with a.s. continuous sample paths. Furthermore, let $\{\nu_T\}$ be a sequence of random probability measures from $(\Omega, \F,\pr)$ to $(S,\mc{S})$ s.t. $\nu_T \Rightarrow \nu$ weakly where $\nu$ is a deterministic probability measure on $(S,\mc{S})$. Then for $p > 0$
  \[
  \int_S |\zeta_T(s)|^p d \nu_T(ds) \Rightarrow \int_S |\zeta(s)|^p d \nu(ds).
  \]
\end{lemma}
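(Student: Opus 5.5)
The plan is to reduce the statement to the continuous mapping theorem applied to the pair $(\zeta_T,\nu_T)$. Consider the functional
\[
\Phi: D(S,\rnum)\times \mc{P}(S) \to \rnum,\qquad \Phi(z,\mu)=\int_S |z(s)|^p\,\mu(ds).
\]
First I will establish joint convergence $(\zeta_T,\nu_T)\Rightarrow(\zeta,\nu)$ in the product space. Because $\nu$ is deterministic, this follows from a Slutsky-type argument. The marginal convergence $\nu_T\Rightarrow\nu$ to a constant limit is equivalent to $d_{BL}(\nu_T,\nu)\overset{p}{\to}0$ (convergence in probability in the metric space $(\mc{P}(S),d_{BL})$). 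Together with $\zeta_T\rightsquigarrow\zeta$, this gives joint weak convergence; see e.g.\ \cite{bil68}, Theorem 4.4. Measurability issues in $D(S,\rnum)$ can be handled in the Hoffmann--Jørgensen sense, since the limit is separable (continuous on a compact $S$).

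Second, I will show that $\Phi$ is continuous at every point $(z,\nu)$ with $z\in C(S,\rnum)$. Since $\zeta$ has a.s.\ continuous paths, $(\zeta,\nu)$ lies a.s.\ in the continuity set, and the extended continuous mapping theorem then yields the claim. Take $(z_n,\mu_n)\to(z,\nu)$, where convergence in $D(S,\rnum)$ to a continuous limit implies uniform convergence, so $\|z_n-z\|_\infty\to0$. Split
\[
|\Phi(z_n,\mu_n)-\Phi(z,\nu)|\le \int_S\big||z_n|^p-|z|^p\big|\,d\mu_n+\Big|\int_S|z|^p\,d\mu_n-\int_S|z|^p\,d\nu\Big|.
\]
The second term vanishes because $|z|^p\in C_b(S)$: $z$ is continuous on the compact set $S$, hence bounded. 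For the first term, the values of $z_n$ and $z$ all lie in a fixed compact interval $[-M,M]$ for large $n$. Since $x\mapsto|x|^p$ is uniformly continuous on $[-M,M]$ for every $p>0$, the integrand converges to zero uniformly, and because the $\mu_n$ are probability measures the integral goes to zero. This step is also why $p>0$ suffices rather than $p\ge1$: only uniform continuity is needed, not a Lipschitz or Minkowski-type inequality.

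I expect the main obstacle to be the first step, the joint convergence and the measurability and topology on $D(S,\rnum)$ for a general compact metric index set $S$. I would sidestep topological subtleties by working with the sup-norm and outer-probability (Hoffmann--Jørgensen) convergence, as in \autoref{cor:fcltmetric}, where the limit is tight in $C(S,\rnum)$. I would also check that $\Phi(\zeta_T,\nu_T)$ is a well-defined random variable, or otherwise interpret the convergence via outer expectations. The second step is routine once uniform convergence is in hand.
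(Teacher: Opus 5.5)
Your proof is correct and follows essentially the same route as the paper: joint convergence with the deterministic measure limit via a Slutsky-type argument, then the continuous mapping theorem, with continuity at continuous paths obtained by splitting into a uniform-convergence term and a weak-convergence term against a bounded continuous function. The differences are cosmetic. The paper centers to $\eta_T=\nu_T-\nu$ in the signed-measure space $\mc{M}_2(S)$ with $d_{FM}$, treats $p=1$, and then extends to general $p$. You keep $\nu_T$ in $\mc{P}(S)$ and handle $|\cdot|^p$ directly through uniform continuity on compact intervals, which gives the stated limit $\int_S|\zeta|^p\,d\nu$ in one step.
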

\begin{proof}[Proof of \autoref{lem:intrandom}]
   Let $\eta_T= \nu_T-\nu$. Then $\{\eta_T\} \subset \mc{M}(S)$, where $\mc{M}(S)$ is the space of signed measures on $(S,\mc{S})$ endowed with the topology of weak convergence. Note that the weak topology on this space is metrizable when restricted to any subset of measures with uniformly bounded total variation. Indeed, the metric
    \[
    d_{FM}(\mu,\nu)=\sup\{|\int fd\mu-\int fd\nu|: \|f\|_{\infty} \le 1, \text{Lip}(f)\le 1\},
    \]
    where $\text{Lip}(f)$ denotes the smallest constant $K$ such that for all $x,y \in S$, $|f(x)-f(y)|\le K |x-y|$, induces the weak topology on $\mc{M}_2(S)=\{ \mu \in \mc{M}(S): \|\mu\|_{TV} \le 2\}$. Note that this metric is equivalent to the Bounded Lipschitz metric introduced by Dudley \citep[see e.g.,][]{fortet1953,dudley2002}.  Now, since $\zeta_T \rightsquigarrow \zeta$ and $d_{FM}(\nu_T,\nu) \overset{p}{\to} 0$, we obtain joint weak convergence in $D(S) \times \mc{M}_2(S)$, i.e., 
    \[
    (\zeta_T, \eta_T) \rightsquigarrow (\zeta,0), \tageq \label{eq:joinwc}
    \]
    where it is understood that $\zeta \in C(S)$ a.s. and $0$ denotes the zero measure. 
     Define $\Phi: D(S) \times \mc{M}_2(S) \to \rnum$ by 
     \[
     \Phi(f,\varpi)= \int_S |f(x)|\varpi(dx).
     \]
     In order to apply the CMT, we  show that this functional is continuous at $(f,0)$ where $f \in C(S)$. To this end, let $f_T \to f$ in $D(S)$ and $\eta_T \Rightarrow 0$ weakly in $\mc{M}_2(S)$; then it suffices to show $\Phi(f_T, \eta_T) \to \Phi(f,0)=0$. By the triangle inequality
     \begin{align*}
     \bv \int_S|f_T(x)|\eta_T(dx)\bv  \le  \int_S|f_T(x)-f(x)| |\eta_T|(dx) + \bv \int_S |f(x)|\eta_T(dx) \bv 
     \end{align*}
     where $|\eta_T|=\eta_T^++\eta_T^{-}$. Since $f$ is continuous on $S$, 
$
\epsilon_T = \sup_{x\in S}|f_T(x)-f(x)| \to 0$,
and thus 
\[\int_S|f_T(x)-f(x)| |\eta_T|(dx) \le 2 \epsilon_T \to 0.\]
For the second term, we note that $f$ is  continuous on a  compact metric space and so $|f|$ is a bounded continuous function. Since $\eta_T \Rightarrow 0$ weakly, the second term therefore also goes to zero.  
Consequently, $\Phi$ is continuous at $(\zeta,0)$ a.s., and the continuous mapping theorem together with \eqref{eq:joinwc} yields
\[
\Phi(\zeta_T,\eta_T) \Rightarrow \Phi(\zeta,0).
\]
Note that this argument can be extended to $L^p$-based test statistics for any nonnegative $p \neq 1$ by an application of the continuous mapping theorem applied to the composition with the map $x \mapsto |x|^p$.
\end{proof}
    
Classical (multi-parameter) hypothesis testing based on discrepancy measures typically requires evaluating $U$-statistics, which demand proving that higher-order dependence structures are asymptotically negligible and accounting for degeneracy issues. It is worth emphasizing that the $L^p$-based statistic formulated in \autoref{thm:consisgridlp} sidesteps the need for a $U$-statistic representation by leveraging the direct functional convergence of the empirical process and the weak invariance principles for the random fields. \autoref{lem:intrandom} establishes that this simpler, direct formulation is fully sufficient to preserve weak convergence to a functional of a Gaussian process. 

Crucially, \autoref{thm:consisgridlp} decouples the practical implementation of the test from its theoretical validation: while in practice the statistic is evaluated exclusively over the discrete grid of available sample points via $\bar{\mu}_T$, the underlying proof relies on the multi-parameter weak invariance principles introduced above. Because this underlying limit theorem treats the metric space itself as a continuous parameter to establish tightness, a topological and geometric trade-off emerges: both the supremum and $L^p$-based approaches require the parameter space $(S,d_S)$ to be compact (or locally compact and Polish) to validate functional convergence, alongside explicit control on the geometric complexity. As demonstrated in the next section, employing a $U$-statistic framework (if possible) implicitly bypasses these constraints that are inherent to weak invariance principles. However, the underlying metric space must still possess sufficient topological structure to support a separating kernel for its valid use in the context of measure-theoretic inference.

 \subsection{Weak invariance principles under Hilbertian geometry}
 In this section, weak invariance principles are established that allow one to estimate the integral probability metric where the class $\Gamma$ is the unit ball of a RKHS, suitable if the metric space adheres to \autoref{thm:isombedSch}. Limit theorems are derived both in the degenerate and nondegenerate case, which require a different scaling parameter to ensure a nondegenerate limiting distribution \citep[see e.g.,][and references therein]{BBD01,Leucht2013,wendler2012,ginenickl2016}.

\subsection{Weak invariance principles for multi-parameter $U$-processes }\label{sec:weinpr}

If \autoref{thm:isombedSch} holds, then we can find a strictly positive definite continuous kernel such that \eqref{eq:gamballrkhs} defines a metric. Since
\begin{align*}
    \|m_{\mu}-m_{\nu}\|^2_{\rkhs}  &= \int_S \int_S k(x,y) \mu(dx) \mu(dy) + \int_S \int_S k(x,y) \nu(dx) \nu(dy)
    \\& -2 \int_S \int_S k(x,y) \mu(dx) \nu(dy), \tageq \label{eq:mmdpop}
\end{align*}
the problem of estimation reduces to the analysis of $U$-statistics. 
In order to estimate population quantities such as \eqref{eq:mmdpop} for a process that is potentially nonstationary, a flexible building block is the following partial sum process $\{\mathscr{S}_T(u,v,\lambda): u,v \in [0,1], \lambda \in [0,\Lambda]\}$ defined by 
\[
\mathscr{S}_T(u,v,\lambda)=   \frac{1}{T^{2}} \sum_{t=1}^{\flo{u T}}\sum_{s=1}^{\flo{vT}}h({\mb{X}}_{t,T}, {\mb{X}}_{s,T}, \lambda),\tageq\label{eq:bbpro}
\]
where $h: S \times S \times [0, \Lambda] \to \rnum$ is a kernel function that is symmetric in its first two arguments. The demeaned kernel $h$ will be denoted by $\hti$.
Under regularity conditions,
\[
\E \mathscr{S}_T(u,v,\lambda) \to 
\int_0^u \int_0^v \Big(\int_S\int_S  h(x,y,\lambda)\mu^X_\eta(dx)\mu_w^X(dy)\Big) d\eta d w.
\]
where $\mu_\eta^X = \pr\circ \mb{X}^{-1}_0(\eta)$ denotes the marginal distribution of $(\mb{X}_0(\eta): \eta \in [0,1])$ at rescaled time $\eta \in [0,1]$. 
An appropriate scaling factor for the process in \eqref{eq:bbpro} to obtain a nondegenerate limiting distribution depends heavily on the properties of the kernel $h$. We distinguish below between the nondegenerate and degenerate case, which will be relevant in relation to the respective null and alternative hypotheses for testing equality of measures, and calculation of corresponding power properties of such tests.

\subsubsection{Nondegenerate case}

\begin{thm} \label{thm:fcltuv}
Assume that $\{\mb{X}_{t,T}\}$ is a Polish-valued locally stationary process with auxiliary process $\{\mb{X}_t(u): t\in \znum, u \in [0,1]\}$ as defined in \autoref{def:Localstationarity} with representations  as in \autoref{as:rep}, and such that both satisfy \eqref{eq:delta0} with $\delta_0(m) \to 0$ as $m \to \infty$.
In addition, suppose that
\begin{enumerate}[label=(\roman*)]
\item  $\sup_{T \in \mb{N}}\sup_\lambda \sup_{k \ge 0} \sum_{j=0}^{\infty} \sup_{t} \nu^{W}_{t,t-k,T}(j,\lambda) <\infty$ for $W \in \{\mb{X}, \mb{X}(\cdot)\}$, where 
\begin{align*}
    \nu^{W}_{t,t-k,T}(j,\lambda)&= \sup_{m \ge 1} \|P_{t-j}\big(\hti(W_{m,t,T}, W_{m,t-k,T}, \lambda)\big)\|_{\rnum,2};
\end{align*}

\item 
$\sup_T \sup_{k \ge 0} \sum_{j=0}^\infty j\sup_t \Big\| P_{t-j}\big(\hti({\mb{X}}_{t,T}, {\mb{X}}_{t-k,T},\lambda)\big)\Big\|_{\rnum,6+\epsilon}<\infty;$
\item the kernel function $h: S \times S \times [0,\Lambda]$ is nonnegative, globally bounded, symmetric in its first two arguments, monotone in its third argument, and locally Lipschitz continuous in each component, that is, 
\begin{align*}
    |h(x,y,\lambda)- h(x^\prime, y, \lambda)| & \le L_1 d_S(x,x^\prime)
    \\  |h(x,y,\lambda)- h(x, y, \lambda^\prime)| & \le L_2(x,y) |\lambda-\lambda^\prime|,
\end{align*}
where the Lipschitz function $L_2$ satisfies $\sup_{T \in \mb{N}} \sup_{1 \le t,s \le T} \mathbb{E} L^2(\mb{X}_{t,T}, \mb{X}_{s,T}) < \infty$ and $\sup_{u, v \in [0,1]} \sup_{t,s} \mathbb{E} L^2(\mb{X}_t(u), \mb{X}_s(v)) < \infty$. 
\end{enumerate}
Then 
\[\Big\{T^{1/2} \Big(\mathscr{S}_T(u,v,\lambda)-\E \mathscr{S}_T(u,v,\lambda)\Big)\Big\}_{u,v \in [0,1], \lambda \in [0,\Lambda]}{\rightsquigarrow} \Big\{\mathbb{G}(u,v,\lambda)\Big\}_{u,v \in [0,1],\lambda \in [0,\Lambda]} \quad (T \to \infty) \]
in $D([0,1]^2\times [0,\Lambda])$ w.r.t. the Skorokhod topology,
 where $\{\mathbb{G}(u,v,\lambda): u,v \in [0,1], \lambda\in [0,\Lambda]\}$  is a zero-mean Gaussian process of which the covariance structure is given by
\begin{align*}
Cov(\mb{G}(u_1, v_1,\lambda_1), \mb{G}(u_2,v_2, \lambda_2))&= \int_0^{\min(u_1,u_2)} \sigma_\eta( v_1,\lambda_1, v_2,\lambda_2) d \eta
\\&+ \int_0^{\min(u_1,v_2)} \sigma_\eta( v_1,\lambda_1,u_2,\lambda_2) d \eta
\\&+ \int_0^{\min(v_1,u_2)} \sigma_\eta( u_1,\lambda_1,v_2,\lambda_2) d \eta
\\&+ \int_0^{\min(v_1,v_2)} \sigma_\eta( u_1,\lambda_1,u_2,\lambda_2) d \eta,
\end{align*}
where
\begin{align*}
&\sigma_\eta(z,\lambda_1,z^\prime,\lambda_2)
\\&= \sum_{k \in \znum} \text{Cov}\Big(\int_0^{z}\E_{{\mb{X}}^\prime_0(w)}[\hti({\mb{X}}_0(\eta), {\mb{X}}^\prime_0(w),\lambda_1)]dw, \int_0^{z^\prime}\E_{{\mb{X}}^\prime_0(w)}[\hti({\mb{X}}_k(\eta),{\mb{X}}^\prime_0(w),\lambda_2)]dw\Big),
\end{align*}
and where $(\mb{X}^\prime_t(u): t\in \znum, u \in [0,1])$ denotes an independent copy of  $(\mb{X}_t(u): t\in \znum, u \in [0,1])$. 
\end{thm}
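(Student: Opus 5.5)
The plan is a Hoeffding-type decomposition of the two-parameter $U$-type process into a linear part and a degenerate remainder, followed by the usual fidi-plus-tightness route. Write $\theta_{t,s}(\lambda)=\E h(\mb{X}_{t,T},\mb{X}_{s,T},\lambda)$. Using independent copies, set
\[
h_1^{(s)}(x,\lambda)=\E\, h(x,\mb{X}^\prime_{s,T},\lambda)-\theta_{t,s}(\lambda)
\]
and define the degenerate remainder $h_2 := \hti - h_1^{(s)}(x) - h_1^{(t)}(y)$. This splits $T^{1/2}(\mathscr{S}_T-\E\mathscr{S}_T)$ into three pieces:
\[
T^{-3/2}\sum_{t\le \flo{uT}}\sum_{s\le\flo{vT}} h_1^{(s)}(\mb{X}_{t,T},\lambda)
+ T^{-3/2}\sum_{s\le \flo{vT}}\sum_{t\le\flo{uT}} h_1^{(t)}(\mb{X}_{s,T},\lambda)
+ R_T(u,v,\lambda).
\]
Local stationarity (\autoref{def:Localstationarity}) together with the Lipschitz property in (iii) lets us replace $\mb{X}^\prime_{s,T}$ by $\mb{X}^\prime_0(s/T)$ at a cost of $O(T^{-1})$ uniformly. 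A Riemann-sum argument then shows that the inner average $T^{-1}\sum_{s\le\flo{vT}}h_1^{(s)}(x,\lambda)$ converges uniformly in $(x,v,\lambda)$ to
\[
g_v(x,\lambda)=\int_0^v \E_{\mb{X}^\prime_0(w)}\bigl[\hti(x,\mb{X}^\prime_0(w),\lambda)\bigr]\,dw .
\]
The linear part is therefore, up to negligible terms, $\zeta$-type partial sums $T^{-1/2}\sum_{t\le\flo{uT}} g_v(\mb{X}_{t,T},\lambda)$ plus the analogue with the roles of $u$ and $v$ swapped. This is exactly the structure behind the four-term covariance in the statement.

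The fidis of the linear part follow the proof of \autoref{thm:fidismetric}. Fix finitely many $(u_i,v_i,\lambda_i)$ and apply Cramér–Wold. Approximate by $m$-dependent coupled versions; $\delta_0(m)\to0$ and boundedness of $h$ make the error small in $L^2$. Then apply a martingale-difference decomposition $\sum_j P_{t-j}(\cdot)$, whose summability is guaranteed by (i), followed by a martingale CLT. The variance computation reduces, via local stationarity and the Riemann approximation used in \autoref{thm:ergthm}, to $\int_0^{\min}\sigma_\eta\,d\eta$ with the stated local long-run covariance $\sigma_\eta(z,\lambda_1,z^\prime,\lambda_2)$. For the remainder, I would show $\sup_{u,v,\lambda}|R_T|\to0$ in probability. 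Because $h_2$ is degenerate, the projections give
\[
\E R_T(u,v,\lambda)^2 = O\Bigl(T\cdot T^{-4}\sum_{t,s,t^\prime,s^\prime}|\mathrm{cov}|\Bigr)=O(T^{-1}),
\]
where the summed covariances vanish unless indices are close, controlled by (i). Uniformity is then obtained by a chaining/maximal inequality over a grid in $(u,v,\lambda)$, using monotonicity of $h$ in $\lambda$ to bracket between grid points.

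Tightness of the linear part in $D([0,1]^2\times[0,\Lambda])$ is where I expect the main obstacle, since the index set is three-dimensional and the increments in $v$ enter through $g_v$. I would use a Bickel–Wichura-type moment criterion on blocks: bound the $L^{q}$ norm, $q=6+\epsilon$, of rectangular increments by $C(\text{block volume})^{1/2}$, which is the condition \eqref{eq:sumtightcompol}-like assumption (ii). Use Burkholder's inequality applied to the projection decomposition, with the factor $j$ in (ii) absorbing the reordering of the $P_{t-j}$ sums. Increments in $v$ are handled by $|g_v-g_{v^\prime}|\le \|h\|_\infty|v-v^\prime|$, and increments in $\lambda$ by the $L_2$-Lipschitz bound in (iii). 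Since $q/2>3$ equals the dimension, this yields a block-moment exponent exceeding $1$, as the multiparameter criterion of Bickel–Wichura / Ivanoff requires. Combined with fidi convergence and negligibility of $R_T$, Slutsky gives the weak convergence to $\mb{G}$.
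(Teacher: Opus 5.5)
\textbf{The gap is in your treatment of the degenerate remainder $R_T$.} Your bound $\E R_T^2=O(T^{-1})$ assumes that the fourth-order terms $\E[h_2(\mb{X}_{t,T},\mb{X}_{s,T})h_2(\mb{X}_{t',T},\mb{X}_{s',T})]$ vanish unless every index is close to another one. That holds for an $m$-dependent array. It does not hold for $\mb{X}_{t,T}$, and the hypotheses give no substitute. Condition (i) is uniform in the lag $k$ but imposes no decay in $k$, and $\delta_0(m)\to 0$ comes without a rate. So replacing $\mb{X}_{t,T}$ by its $m$-coupled version inside the fourth-order sum costs order $T\,\delta_0(m)$ after normalisation, which need not vanish.

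There is also a centring problem. Centring $h_1^{(s)}$ at the joint mean $\theta_{t,s}$ makes $h_2$ non-degenerate: integrating out one argument against an independent copy leaves the constant $\theta_{t,s}-\E h(\mb{X}'_{t,T},\mb{X}''_{s,T},\lambda)$, which is nonzero for nearby $t,s$. You need to centre at the product mean.

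What you can actually prove is only pointwise. For the $m$-coupled array, degeneracy gives $O(m^2/T)$. The coupling error of the centred double sum can be handled diagonal by diagonal ($s=t-k$), via projections and the $\sup_m$ built into (i). This gives a bound $T^{-3/2}\cdot T\cdot\sqrt{T}\,\epsilon_m=\epsilon_m$ with $\epsilon_m\to 0$.

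Your upgrade to uniformity does not work as described. Between grid points, $R_T$ can move deterministically by order $\delta\sqrt{T}$ in $(u,v)$, so the mesh must be $o(T^{-1/2})$, i.e.\ far more than $T$ grid points. No union bound against a pointwise second moment of order $T^{-1}$ (let alone $o(1)$) can absorb that. A genuine maximal inequality would need rectangle-increment bounds for the degenerate part of the dependent array, and nothing in your sketch supplies them.

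\textbf{The fix is to stop routing tightness through the Hoeffding decomposition.} This is also the organisation the hypotheses are visibly tailored to. Condition (ii) concerns the kernel along a fixed lag, $\hti(\mb{X}_{t,T},\mb{X}_{t-k,T},\lambda)$, uniformly in $k$. Split $\Xi_T:=T^{1/2}(\mathscr{S}_T-\E\mathscr{S}_T)$ into at most $2T$ diagonals and apply a Burkholder-type inequality on each. This gives $\|\Xi_T(u,v,\lambda)-\Xi_T(u',v,\lambda)\|_{\rnum,6+\epsilon}\lesssim T^{-3/2}\cdot T\cdot\sqrt{T|u-u'|}=|u-u'|^{1/2}$ for the full process; no degeneracy is needed at the $\sqrt{T}$ scale. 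The same holds in $v$ by symmetry, and $\lambda$ is handled through (iii) (Lipschitz bound plus monotone bracketing).

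For the square-root metric $d_E$ on $[0,1]^2\times[0,\Lambda]$, the covering numbers are of order $\epsilon^{-6}$, so the chaining bound behind \autoref{thm:conditionsfortightpol} needs $p>6$. That is evidently where $6+\epsilon$ comes from. It is consistent with the remark that the exponent drops to $2+\epsilon$ when $v,\lambda$ are fixed, and with the $4+\epsilon$ in \autoref{thm:weakconvnondeggamm}, where $\lambda$ is fixed.

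Your Bickel--Wichura rationale misplaces this. An $L^q$ bound on block increments by $(\text{volume})^{1/2}$ would suffice for any $q>2$ in any dimension; the dimension enters only through point-increment bounds, which is what the diagonal argument delivers. Moreover, (ii) gives no direct $L^{6+\epsilon}$ control of the linear term $g_v(\mb{X}_{t,T},\lambda)$, which is where you wanted to apply it.

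Once the full process is tight, you need the Hoeffding split only for the fidis, where pointwise negligibility of the remainder suffices. There you still have to justify that (i) controls the projections of $g_v(\mb{X}_{t,T},\lambda)$. It does so only indirectly, through lags $k>m$ at which the $m$-coupled pair decouples. At those lags, conditional Jensen transfers the bound on $P_{t-j}\hti(\mb{X}_{m,t,T},\mb{X}_{m,t-k,T},\lambda)$ to $P_{t-j}$ of the first Hoeffding projection. This is what the $\sup_k$ and $\sup_m$ in (i) are for.
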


Several remarks are in order. 
First, this result is much more general than is necessary to characterize IPM-based statistics under the alternative and does not rely on a positive-definite kernel. This theorem can also be used to construct tests for various other hypotheses such as independence, inter-point distance distributions, correlation dimensions etc. 
   Second, the condition in (ii) is used to control the regularity of the sample paths and ensure that the limit process has continuous sample paths almost surely. If,  however, one or more of these three parameters is treated as fixed in the analysis, i.e., if we fix $v, \lambda$ then the factor  $6+\epsilon$ in condition (ii) can be reduced to $2+\epsilon$, which can be compared with \autoref{thm:conditionsfortightpol}. If one is only interested in the finite-dimensional distributions then (ii) can be dropped altogether.
    Finally, in the iid case, the dependence conditions in (i) and (ii) are naturally satisfied. Sufficient conditions on the dependence measures on the original space for these type of dependence measures to hold were studied in \cite{vDB25}.

\begin{Proposition}\label{prop:infsumrep}
    We can represent the process as  
\[
\mathbb{G}(u,v,\lambda):=\sum_{\ell=1}^{\infty} \int_0^u \tilde{g}_\ell(\eta,v,\lambda) d\mathbb{B}_\ell(\eta)+ 
\sum_{\ell=1}^{\infty} \int_0^v \tilde{g}_\ell(\eta,u,\lambda) d\mathbb{B}_\ell(\eta),  \tageq\label{eq:GauslimUproc}\]
where $\{\mb{B}_{\ell}\}_{\ell \in \mb{N}}$ is a sequence of independent standard Brownian motions and where the sequence of functions $\{\tilde{g}_{\ell}\}$ is such that 
\[
\sigma_\eta(u_1,\lambda_1, u_2,\lambda_2)= \sum_{\ell=1}^\infty \tilde{g}_{\ell}(\eta, u_1,\lambda_1)\tilde{g}_{\ell}(\eta, u_2,\lambda_2). 
\]
\end{Proposition}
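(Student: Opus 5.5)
We want to show that the Gaussian process $\mathbb{G}$ from \autoref{thm:fcltuv} has the same law as the series \eqref{eq:GauslimUproc}. Since both are zero-mean Gaussian processes, it suffices to show that the series converges and that its covariance coincides with the one in \autoref{thm:fcltuv}. The strategy mirrors \autoref{prop:infsumrepball}: obtain the functions $\tilde g_\ell$ from a Mercer-type expansion of $\sigma_\eta$, then compute covariances via the Itô isometry.

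\emph{Step 1: expansion of $\sigma_\eta$.} For each fixed $\eta\in[0,1]$, the map $(z,\lambda,z',\lambda')\mapsto\sigma_\eta(z,\lambda,z',\lambda')$ is a covariance kernel on the compact set $[0,1]\times[0,\Lambda]$. It is the long-run covariance of the stationary sequence
\[
Y_k(z,\lambda)=\int_0^z \E_{\mb X_0'(w)}\big[\hti(\mb X_k(\eta),\mb X_0'(w),\lambda)\big]\,dw,
\]
so it is symmetric and positive semidefinite. Its summability follows from condition (i) of \autoref{thm:fcltuv} through the usual projection bound $\sum_k|\mathrm{Cov}(\cdot,\cdot)|\le(\sum_j\nu(j))^2$.

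We next need continuity of $\sigma_\eta$, uniformly in $\eta$.
\begin{itemize}
\item In $z$, continuity holds because $h$ is bounded, so $Y_k$ is Lipschitz in $z$.
\item In $\lambda$, it follows from the Lipschitz condition (iii) with the square-integrable constant $L_2$.
\item Uniformity of the series over $k$ comes from the summable projection bound, by dominated convergence.
\end{itemize}
Mercer's theorem then gives an orthonormal eigen-decomposition with nonnegative eigenvalues $\kappa_{\ell}(\eta)$ and eigenfunctions $\phi_\ell(\eta,\cdot)$. Setting $\tilde g_\ell=\sqrt{\kappa_\ell}\,\phi_\ell$ yields $\sigma_\eta=\sum_\ell\tilde g_\ell\otimes\tilde g_\ell$, converging absolutely and uniformly.

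\emph{Step 2: joint measurability in $\eta$.} The series contains the stochastic integrals $\int_0^u\tilde g_\ell(\eta,\cdot)\,d\mb B_\ell(\eta)$, so $\eta\mapsto\tilde g_\ell(\eta,z,\lambda)$ must be measurable and square integrable. This is the main obstacle, because Mercer eigenfunctions need not depend measurably on $\eta$ (eigenvalues can cross, and eigenspaces can have multiplicity). I would try the following, in order.
\begin{itemize}
\item First, use a measurable selection of eigenpairs, for instance via a measurable Borel functional calculus on the trace-class operators $\Sigma_\eta$, whose dependence on $\eta$ is continuous in trace norm.
\item Alternatively, avoid eigenfunctions altogether: fix an orthonormal basis $\{e_\ell\}$ of $L^2([0,1]\times[0,\Lambda])$ and set $\tilde g_\ell(\eta,\cdot)=\Sigma_\eta^{1/2}e_\ell$. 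The operator square root is continuous in $\eta$, so measurability is immediate, and $\sum_\ell \Sigma^{1/2}_\eta e_\ell\otimes\Sigma^{1/2}_\eta e_\ell=\Sigma_\eta$ still holds. Pointwise identification of the kernel needs the continuity established in Step 1.
\end{itemize}
Square integrability is automatic, since $\sum_\ell\int_0^1\tilde g_\ell(\eta,z,\lambda)^2\,d\eta=\int_0^1\sigma_\eta(z,\lambda,z,\lambda)\,d\eta<\infty$.

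\emph{Step 3: convergence and covariance of the series.} Because the $\mb B_\ell$ are independent and $\sum_\ell\int_0^1\tilde g_\ell^2\,d\eta<\infty$, the series \eqref{eq:GauslimUproc} converges in $L^2$ for each fixed $(u,v,\lambda)$. The limit is Gaussian and centered. Applying the Itô isometry term by term, with cross terms across distinct $\ell$ vanishing by independence, the covariance of the two sums splits into four pieces. The first is
\[
\sum_\ell\int_0^{u_1\wedge u_2}\tilde g_\ell(\eta,v_1,\lambda_1)\,\tilde g_\ell(\eta,v_2,\lambda_2)\,d\eta=\int_0^{u_1\wedge u_2}\sigma_\eta(v_1,\lambda_1,v_2,\lambda_2)\,d\eta .
\]
The other three pieces are obtained analogously; exchanging sum and integral is justified by Cauchy–Schwarz and the $L^2$-summability above. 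Together they reproduce exactly the four terms in the covariance of \autoref{thm:fcltuv}.

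Since the finite-dimensional distributions therefore agree, and both processes admit continuous versions (for $\mathbb G$ this comes from the tightness in \autoref{thm:fcltuv}), the representation holds in law on $C([0,1]^2\times[0,\Lambda])$.
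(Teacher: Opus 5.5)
Your proposal is correct and follows essentially the route the paper indicates for this proposition and its analogue, Proposition~\ref{prop:infsumrepball}. That route is a Mercer decomposition of the continuous, symmetric, positive semidefinite local long-run covariance $\sigma_\eta$ on the compact set $[0,1]\times[0,\Lambda]$, followed by identifying the Gaussian law: the It\^o isometry shows the series reproduces the four covariance terms of Theorem~\ref{thm:fcltuv}. Your treatment of measurability in $\eta$ (e.g.\ via $\tilde g_\ell(\eta,\cdot)=\Sigma_\eta^{1/2}e_\ell$) and the continuous-version argument fill in details the paper leaves implicit.
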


\begin{Corollary}
    In the stationary case, the previous representation reduces to
\[
\mb{G}(u,v,\lambda) = \sum_{\ell=1}^\infty \tilde{g}_\ell(v,\lambda) \mb{B}_\ell(u) + \sum_{\ell=1}^\infty \tilde{g}_\ell(u,\lambda) \mb{B}_\ell(v).
\]
\end{Corollary}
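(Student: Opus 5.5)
In the stationary case the local long-run covariance $\sigma_\eta$ does not depend on $\eta$. The plan is to show that the functions $\tilde g_\ell$ in \autoref{prop:infsumrep} can then be chosen independent of $\eta$, and afterwards to evaluate the stochastic integrals in \eqref{eq:GauslimUproc} explicitly.

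\textbf{Step 1: $\sigma_\eta$ is constant in $\eta$.} Under stationarity we have $\mb{X}_{t,T}=\mb{X}_t$ and $\mb{X}_t(u)=\mb{X}_t$ for all $u\in[0,1]$. So the auxiliary process is independent of rescaled time. In the definition of $\sigma_\eta(z,\lambda_1,z',\lambda_2)$ the inner expectation $\E_{\mb{X}_0'(w)}[\hti(\mb{X}_0(\eta),\mb{X}_0'(w),\lambda_1)]$ then does not depend on $w$, and the $dw$-integral over $[0,z]$ just produces the factor $z$. Consequently
\[
\sigma_\eta(z,\lambda_1,z',\lambda_2)=z z'\sum_{k\in\znum}\mathrm{Cov}\big(\E'[\hti(\mb{X}_0,\mb{X}_0',\lambda_1)],\E'[\hti(\mb{X}_k,\mb{X}_0',\lambda_2)]\big)=:\sigma(z,\lambda_1,z',\lambda_2),
\]
and this is free of $\eta$. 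The series converges absolutely by condition (i) of \autoref{thm:fcltuv}.

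\textbf{Step 2: an $\eta$-free Mercer basis.} The function $\sigma$ is a continuous, symmetric, positive semidefinite kernel on the compact set $[0,1]\times[0,\Lambda]$. Continuity follows from the boundedness and Lipschitz property of $h$ in condition (iii) together with the summable projections. Mercer's theorem therefore gives $\sigma(z,\lambda_1,z',\lambda_2)=\sum_\ell \tilde g_\ell(z,\lambda_1)\tilde g_\ell(z',\lambda_2)$, with uniform convergence. We take $\tilde g_\ell(\eta,z,\lambda):=\tilde g_\ell(z,\lambda)$ for every $\eta$; this choice is admissible in \autoref{prop:infsumrep}.

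\textbf{Step 3: evaluate the integrals.} Since the integrands are deterministic and constant in $\eta$, we get $\int_0^u \tilde g_\ell(v,\lambda)\,d\mb{B}_\ell(\eta)=\tilde g_\ell(v,\lambda)\mb{B}_\ell(u)$, and similarly for the second sum. Substituting into \eqref{eq:GauslimUproc} yields the stated formula.

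\textbf{Main obstacle: convergence of the series.} The series must converge in $L^2$ uniformly in $(u,v,\lambda)$. This holds because $\sum_\ell \E|\tilde g_\ell(v,\lambda)\mb{B}_\ell(u)|^2 = u\,\sigma(v,\lambda,v,\lambda)$, which is bounded. As a final check, the covariance of the resulting Gaussian process should reproduce the four-term covariance of \autoref{thm:fcltuv} with $\sigma_\eta$ replaced by $\sigma$. This can be verified directly from the independence of the $\mb{B}_\ell$ and $\mathrm{Cov}(\mb{B}_\ell(a),\mb{B}_\ell(b))=a\wedge b$. Because both processes are centered Gaussian with equal covariance, they are equal in law.
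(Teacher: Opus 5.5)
Your proposal is correct and follows the route the paper takes implicitly. The paper gives no separate proof and treats the corollary as an immediate consequence of \autoref{prop:infsumrep}: under stationarity $\sigma_\eta$ no longer depends on $\eta$, so the $\tilde{g}_\ell$ can be chosen free of $\eta$, and each stochastic integral collapses to $\tilde{g}_\ell(v,\lambda)\mb{B}_\ell(u)$. Your closing covariance check makes the identification in law self-contained.

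One small caveat: finiteness and continuity of $\sigma(z,\lambda_1,z',\lambda_2)=zz'\sum_{k}\mathrm{Cov}(\cdot,\cdot)$ are better taken as inherited from \autoref{thm:fcltuv} and \autoref{prop:infsumrep} than attributed to condition (i). That condition concerns projections of $\hti(W_{m,t,T},W_{m,t-k,T},\lambda)$ and does not directly control those of the Hoeffding projection $x\mapsto\E'[\hti(x,\mb{X}_0',\lambda)]$.
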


\begin{Remark}[Relation to prior work] \autoref{prop:infsumrep} establishes the full infinite-rank representation for the process $\{\mb{G}(u,v,\lambda): u,v \in [0,1], \lambda \in [0,\Lambda]\}$. Note that this generalizes the limiting representation of the process in \cite{vDB25}, provided the conditions of Theorem~4.7 therein are satisfied for that particular kernel. While the proofs establishing uniform convergence in that work remain entirely robust, the formal statement of the limiting process and its underlying covariance structure implicitly restricted the representation to a rank-1 approximation. The truncation error of this lower-dimensional representation is strictly governed by the spectral decay of the covariance operator. For sufficiently regular kernels—such as bounded analytic kernels—the self-normalized construction of the test statistic mathematically reduces this omitted tail remainder to a negligible threshold. While the methodology in \cite{vDB25} utilized a data-adaptively smoothed empirical cdf, this smoothing is anticipated to induce a similarly fast decay. This mechanism provides a strong theoretical justification for why the empirical results and simulated critical values obtained under the rank-1 approximation remained highly robust in those finite-sample simulations, though formalizing this spectral behavior for data-adaptive kernels is left for future work. \end{Remark}

\subsubsection{Degenerate case}\label{sec:degsection}

In the following, denote $K_x(\cdot)=k(x,\cdot) = \int_0^{\Lambda} h(x,\cdot,r)  \rho(dr)$ where $\rho$ is a nonnegative Borel measure on $\mc{B}([0,\Lambda])$ with $\text{supp}(\rho) \cap (0,\Lambda] \neq \emptyset$. 
The demeaned kernel will be denoted by   $\tilde{K}_{\mb{X}_{t,T}}= K_{\mb{X}_{t,T}}-m_{\mu_{t,T}}$ where $m_{\mu_{t,T}}(x) = \int_S k(x,y)\mu_{t,T}(dy)$.

\begin{thm} \label{thm:fcltuvdeg}
Assume that $\{\mb{X}_{t,T}\}$ is a Polish-valued locally stationary process with auxiliary process $\{\mb{X}_t(u): t\in \znum, u \in [0,1]\}$ as defined in \autoref{def:Localstationarity} with respective representations as in \autoref{as:rep}, and such that both satisfy \eqref{eq:delta0} with $\delta_0(m) \to 0$ as $m \to \infty$. In addition, suppose that 
\begin{enumerate}[label=(\roman*)]
\item The function $h: S \times S \times [0,\Lambda]$ is positive-definite and bounded for every fixed $r \in [0, \Lambda]$, and continuous in all arguments. 
\item  $\sup_T \sum_{j=0}^{\infty}\sup_{t \in [T]} \nu^{\mb{X}}_{t,T}(j) <\infty$ and $\sup_u \sum_{j=0}^{\infty}\sup_{t \in \znum} \nu^{\mb{X}(u)}_{t}(j) <\infty$, respectively, 
where
\begin{align}
\nu^{\mb{X}}_{t,T}(j) &= \sup_{m \ge 1}\|P_{t-j}({\tilde{K}}_{{\mb{X}_{m,t,T}}})\|_{\rkhs,2} \label{eq:nus_array} \\
\nu^{\mb{X}(u)}_{t}(j) &= \sup_{m \ge 1}\|P_{t-j}({\tilde{K}}_{\mb{X}_{m,t}(u)})\|_{\rkhs,2}. \label{eq:nus_stat}
\end{align}
\end{enumerate}
Then the Hilbert-valued process $\{\xi_{T}(u): u \in [0,1]\}$  defined by 
\[
\xi_{T}(u) :=\frac{1}{\sqrt{T}} \sum_{t=1}^{\flo{uT}} \tilde{K}_{\mb{X}_{t,T}}
\]
converges with respect to the Skorokhod topology on $D([0,1], \rkhs)$ to a zero-mean Gaussian process with independent increments and covariance structure  defined via the family of incremental covariance operators $\{\Gamma(u): u \in [0,1] \} \subset D([0,1], S_1(\rkhs))$
\begin{align*}
\Gamma(\eta)= \sum_{\ell \in \znum} \E[\tilde{K}_{\mb{X}_0(\eta)}\otimes \tilde{K}_{\mb{X}_\ell(\eta)}]. \tageq \label{eq:Gammcov}
\end{align*}
\end{thm}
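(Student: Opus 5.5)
We prove the functional CLT for the Hilbert-valued partial sums $\xi_T$ in two stages. First we establish convergence of the finite-dimensional distributions in $\rkhs$. Then we prove tightness in $D([0,1],\rkhs)$. Throughout we use the martingale (projection) decomposition $\tilde K_{\mb{X}_{t,T}}=\sum_{j\ge0}P_{t-j}(\tilde K_{\mb{X}_{t,T}})$. This decomposition is valid in $L^2(\Omega,\rkhs)$ because $\delta_0(m)\to0$, $h$ is continuous and bounded, and hence $\tilde K_{\mb{X}_{m,t,T}}\to\tilde K_{\mb{X}_{t,T}}$ in $L^2(\rkhs)$ by dominated convergence. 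Since $k$ is bounded and continuous, \autoref{prop:contemb} makes $\rkhs$ separable, and $x\mapsto K_x$ is continuous, so all random elements are Bochner measurable.

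\emph{Step 1 (reduction to a martingale).} Following the Gordin/Wu approach, set $D_{s,T}=\sum_{j\ge0}P_s(\tilde K_{\mb{X}_{s+j,T}})$. Condition (ii), $\sup_T\sum_j\sup_t\nu^{\mb{X}}_{t,T}(j)<\infty$, makes this series converge in $L^2(\rkhs)$ uniformly in $s,T$. The $D_{s,T}$ form an $\rkhs$-valued martingale difference array with respect to $\G_s$. We then show
\[
\max_{k\le T}\Big\|\frac1{\sqrt T}\sum_{t=1}^k\big(\tilde K_{\mb{X}_{t,T}}-D_{t,T}\big)\Big\|_{\rkhs}\overset{p}{\to}0 .
\]
The remainder is a telescoping boundary term. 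It is controlled by the tails $\sum_{j>J}\sup_t\nu(j)$ together with Doob's inequality in the Hilbert space; the Pythagorean identity applies because the $P_s$ are orthogonal in $L^2(\rkhs)$.

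\emph{Step 2 (fidis).} Fix $0\le u_1<\dots<u_J\le1$ and $g_1,\dots,g_J\in\rkhs$. By Cramér–Wold it suffices to apply a real martingale CLT to $\sum_i\langle g_i,\cdot\rangle$ of the increments. The Lindeberg condition holds because $\|D_{t,T}\|_{\rkhs}$ is uniformly square integrable, being bounded by $2\sup k^{1/2}$ times summable coefficients. For the conditional variance we replace $\mb{X}_{t,T}$ by $\mb{X}_t(t/T)$ using \eqref{eq:locstatp} and continuity of $x\mapsto K_x$. We then block time as in the proof of \autoref{thm:ergthm} and apply the ergodic theorem to the stationary $\mb{X}_t(\eta)$ on each block, using condition (ii) for $\mb{X}(u)$ to identify $\E[D_0(\eta)\otimes D_0(\eta)]=\Gamma(\eta)$ in \eqref{eq:Gammcov}. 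The standard identity that the long-run covariance of a martingale approximation equals the sum of autocovariances, with absolute convergence in trace norm from $\sum_j\nu(j)<\infty$, together with a Riemann sum argument, yields increment covariances $\int_{u_{i-1}}^{u_i}\Gamma(\eta)d\eta$. Independence of the increments follows from the martingale structure. Continuity of $\eta\mapsto\Gamma(\eta)$, or at least measurability in $S_1(\rkhs)$, follows from the second bound in \eqref{eq:locstatp}.

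\emph{Step 3 (tightness), expected to be the main obstacle.} Hilbert-valued tightness requires more than tightness of the norm increments; it also needs \emph{flat concentration}. We aim to show that for an orthonormal basis $(e_\ell)$ of $\rkhs$ with projections $\Pi_N$ onto the first $N$ elements,
\[
\lim_{N\to\infty}\limsup_T\pr\Big(\sup_u\|(I-\Pi_N)\xi_T(u)\|_{\rkhs}>\epsilon\Big)=0 .
\]
By Doob's inequality applied to the martingale part from Step 1, this reduces to $\sup_T\frac1T\sum_t\E\|(I-\Pi_N)D_{t,T}\|^2\to0$. That in turn follows from uniform trace-class control of $\Gamma$: $\sum_\ell\langle\Gamma(\eta)e_\ell,e_\ell\rangle<\infty$ uniformly in $\eta$, combined with a dominated convergence argument. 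The finite-dimensional part $\Pi_N\xi_T$ is tight in $D([0,1],\rnum^N)$ by a martingale FCLT, since Lindeberg and conditional variance convergence are already shown in Step 2. Together with the a.s. continuity of the Gaussian limit, which is Brownian-type with integrable incremental covariance, this gives convergence in the Skorokhod topology on $D([0,1],\rkhs)$. The delicate point is making the tail-projection bound uniform in $T$ for the nonstationary array, rather than only for the auxiliary process. We would handle this by first comparing $\E\|(I-\Pi_N)D_{t,T}\|^2$ with its counterpart for $\mb{X}_t(t/T)$ via \eqref{eq:locstatp} and then invoking the uniform summability in condition (ii).
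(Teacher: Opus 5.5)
The paper does not prove this theorem in the text; its proofs are deferred to an appendix available on request. So there is no argument to match yours against line by line. Judged on its own, your route is sound. It has three parts: a Gordin--Wu martingale approximation; a martingale CLT whose conditional-variance step reuses the device from the proof of \autoref{thm:ergthm} (replace $\mathbb{X}_{t,T}$ by $\mathbb{X}_t(t/T)$, block dyadically, apply the ergodic theorem blockwise); and tightness in $D([0,1],\mathcal{H})$ from Doob's inequality for $(I-\Pi_N)$ of the martingale part plus a finite-dimensional martingale FCLT for $\Pi_N\xi_T$, glued by an approximation theorem. This fits hypothesis (ii), which controls only $L^2$ norms. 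Doob plus flat concentration needs nothing beyond second moments, and it supplies the compact containment that an increment-moment criterion of the Davydov--Zitikis type (used for \autoref{thm:ergthm}) cannot give in infinite dimensions. The form of the hypotheses (the $\sup_{m\ge 1}$ over $m$-dependent couplings in $\nu$, together with $\delta_0(m)\to 0$) suggests the intended argument instead proves the result for the coupled $m$-dependent arrays and then lets $m\to\infty$. Your proof uses $\nu$ only through its $m\to\infty$ consequence $\|P_{t-j}(\tilde K_{\mathbb{X}_{t,T}})\|_{\mathcal{H},2}\le \nu^{\mathbb{X}}_{t,T}(j)$, i.e.\ a Hannan-type condition on $\mathbb{X}_{t,T}$ itself, which is more economical.

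Three points need repair.

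\emph{First: your $D_{s,T}$ diverges.} Under the paper's convention $\mathbb{X}_{t,T}=\mathbb{X}_{T,T}$ for $t\ge T$, the series $D_{s,T}=\sum_{j\ge 0}P_s(\tilde K_{\mathbb{X}_{s+j,T}})$ contains the generally nonzero term $P_s(\tilde K_{\mathbb{X}_{T,T}})$ infinitely often, and (ii) does not control those repetitions. Set $D_{s,T}=\sum_{t=s}^{T}P_s(\tilde K_{\mathbb{X}_{t,T}})$ instead. Then bound the right-boundary remainder $\sum_{s\le k}\sum_{t=k+1}^{T}P_s(\tilde K_{\mathbb{X}_{t,T}})$ lag by lag with the same Doob argument you use elsewhere.

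\emph{Second: your Lindeberg justification fails as stated.} The almost sure bound on each $P_s(\tilde K_{\mathbb{X}_{t,T}})$ is a constant, whereas the $\nu(j)$ are $L^2$ norms, so they do not combine into an almost sure summable bound. Uniform integrability of $\|D_{t,T}\|_{\mathcal{H}}^2$ follows instead by splitting $D_{t,T}$ into two pieces. The first $J$ lags are almost surely bounded by $C(J+1)$, and the remainder has $L^2$ norm at most $\sum_{j>J}\sup_t\nu^{\mathbb{X}}_{t,T}(j)$.

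\emph{Third, and substantive: you need uniform tails, which (ii) does not give.} Several of your steps all require $\lim_{J\to\infty}\sup_T\sum_{j>J}\sup_t\nu^{\mathbb{X}}_{t,T}(j)=0$:
the splitting just described; your claim that the series converges ``uniformly in $s,T$''; the maximal remainder bound of the form $C\sum_{j>J}\sup_t\nu^{\mathbb{X}}_{t,T}(j)+CJ^2/\sqrt{T}$; and the comparison with $\mathbb{X}_t(t/T)$ in Step 3. Likewise, deriving continuity of $\eta\mapsto\Gamma(\eta)$ from \eqref{eq:locstatp} needs the tails of $\nu^{\mathbb{X}(u)}$ to vanish uniformly in $u$. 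Condition (ii) as written, with $\sup_T$ and $\sup_u$ outside the sum over $j$, only bounds the sums and does not yield this. State explicitly that you read (ii) as giving summable majorants, $\sum_j\sup_{T,t}\nu^{\mathbb{X}}_{t,T}(j)<\infty$ and $\sum_j\sup_{u,t}\nu^{\mathbb{X}(u)}_t(j)<\infty$, or supply a separate argument.
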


\begin{Proposition}\label{prop:prespdcase}
    We may represent the limiting process as 
    \[
    \xi(u) = \int_0^u \sqrt{\Gamma(\eta)} d\mb{W}_c(\eta),
    \]
where $\mb{W}_c$ is a cylindrical Wiener process on $\rkhs$.
Furthermore, we have the following It{\^o} isometry $$\E\bignorm{\int_0^u \sqrt{\Gamma(\eta)} d \mb{W}_c(\eta)}^2_{\rkhs} = \int_0^u \|\sqrt{\Gamma(\eta)}\|_{S_2(\rkhs)}^2 d\eta.$$
\end{Proposition}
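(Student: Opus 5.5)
The plan is to build the stochastic integral explicitly and check that its law coincides with that of the limit in \autoref{thm:fcltuvdeg}. Since a centred Gaussian process on $\rkhs$ is determined by its covariance, it suffices to show that $\int_0^u \sqrt{\Gamma(\eta)}\,d\mb{W}_c(\eta)$ is a well-defined, centred, $\rkhs$-valued Gaussian process with independent increments, and that its covariance operator on $[u_1,u_2]$ equals $\int_{u_1}^{u_2}\Gamma(\eta)\,d\eta$.

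\textbf{Step 1: properties of $\Gamma(\eta)$.} For each $\eta$, I would check that $\Gamma(\eta)$ is self-adjoint, nonnegative and trace class. The series \eqref{eq:Gammcov} converges in $S_1(\rkhs)$ by the standard projective decomposition $\tilde K_{\mb{X}_\ell(\eta)}=\sum_j P_{\ell-j}(\tilde K_{\mb{X}_\ell(\eta)})$ combined with condition (ii), which gives
\[
\sum_\ell \|\E[\tilde K_0\otimes \tilde K_\ell]\|_{S_1}\le \Big(\sum_j \nu^{\mb{X}(\eta)}(j)\Big)^2<\infty .
\]
Nonnegativity follows because $\Gamma(\eta)$ is the limit of $\frac1n\E[S_n\otimes S_n]$, where $S_n=\sum_{t=1}^n \tilde K_{\mb{X}_t(\eta)}$. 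Hence $\sqrt{\Gamma(\eta)}$ exists by the functional calculus, and
\[
\|\sqrt{\Gamma(\eta)}\|_{S_2}^2=\mathrm{tr}\,\Gamma(\eta)\le \Big(\sup_\eta\sum_j\nu^{\mb{X}(\eta)}(j)\Big)^2,
\]
which is uniformly bounded in $\eta$. I also need measurability of $\eta\mapsto\sqrt{\Gamma(\eta)}$ in $S_2$. I expect this to be the main technical point. My plan is to use $\Gamma\in D([0,1],S_1(\rkhs))$ together with the inequality $\|\sqrt A-\sqrt B\|_{S_2}^2\le\|A-B\|_{S_1}$ for nonnegative trace-class $A,B$ (a Powers--Størmer type bound). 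This shows that $\sqrt{\Gamma(\cdot)}$ is càdlàg, hence Borel, in $S_2(\rkhs)$.

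\textbf{Step 2: the stochastic integral.} Take an orthonormal basis $\{e_\ell\}$ of the separable space $\rkhs$ (separable by \autoref{prop:contemb}), and write $\mb{W}_c=\sum_\ell e_\ell \mb{B}_\ell$ formally, with independent Brownian motions $\mb{B}_\ell$. Define
\[
\xi(u)=\sum_\ell\int_0^u\sqrt{\Gamma(\eta)}e_\ell\,d\mb{B}_\ell(\eta).
\]
Each summand is a Wiener integral of a deterministic, square-integrable $\rkhs$-valued function. The summands are independent, and the series converges in $L^2(\Omega;\rkhs)$ because
\[
\sum_\ell\int_0^u\|\sqrt{\Gamma(\eta)}e_\ell\|^2\,d\eta=\int_0^u\|\sqrt{\Gamma(\eta)}\|_{S_2}^2\,d\eta<\infty .
\]
This computation is exactly the Itô isometry in the statement, obtained by orthogonality of the independent terms and the scalar Itô isometry.

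\textbf{Step 3: identifying the law.} Gaussianity and independent increments follow because $\xi$ is an $L^2$ limit of linear functionals of independent Brownian increments over disjoint intervals. For the covariance, take $g,h\in\rkhs$ and compute
\[
\E\big[\inprod{\xi(u)}{g}\inprod{\xi(u)}{h}\big]=\sum_\ell\int_0^u\inprod{\sqrt\Gamma e_\ell}{g}\inprod{\sqrt\Gamma e_\ell}{h}\,d\eta=\int_0^u\inprod{\Gamma(\eta)g}{h}\,d\eta ,
\]
using Parseval. This matches the covariance of the limit in \autoref{thm:fcltuvdeg}, so the two laws coincide. A continuous modification, if desired, follows from the Kolmogorov criterion via Gaussian moment bounds on the increments together with the uniform trace bound.
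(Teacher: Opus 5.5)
Your proposal is correct. It writes out the standard Hilbert-space stochastic-integration argument that the paper relies on but does not spell out, since the proposition is stated without proof in the main text and details are deferred to an appendix: construct $\int_0^u\sqrt{\Gamma(\eta)}\,d\mb{W}_c(\eta)$ for a deterministic Hilbert--Schmidt integrand, read off the It\^o isometry, and identify the law through Gaussianity, independent increments and the incremental covariance $\int\Gamma(\eta)\,d\eta$. The only difference is cosmetic: the paper later writes the limit in the $\eta$-dependent eigenbasis, $\sum_j\int_0^u\sqrt{\lambda_j(\eta)}\phi_j(\eta)\,d\mb{B}_j(\eta)$ (cf.\ \autoref{lem:inprodfcltdeg}), whereas your fixed-basis expansion $\sum_\ell\int_0^u\sqrt{\Gamma(\eta)}e_\ell\,d\mb{B}_\ell(\eta)$ yields the same law and avoids any measurable selection of eigenvectors in $\eta$.
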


    \begin{lemma}
\label{lem:inprodfcltdeg}
Under the assumptions of \autoref{thm:fcltuvdeg},
    \begin{align*}
     &  \{ \inprod{\xi_{T}(u)}{\xi_{T}(v)}_{\rkhs}: u, v \in [0,1] \}\\& \rightsquigarrow \Big\{\biginprod{\sum_{j=1}^\infty \int_0^u \sqrt{\lambda_j(\eta)}\phi_j(\eta) d\mb{B}_j(\eta)}{\sum_{j=1}^\infty \int_0^v \sqrt{\lambda_j(\eta)}\phi_j(\eta)d\mb{B}_j(\eta)}_{\rkhs}: u, v \in [0,1] \Big\}
    \end{align*}    
    with respect to the Skorokhod topology on $D([0,1]^2,\rnum)$. 
\end{lemma}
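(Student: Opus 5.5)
The plan is to derive the lemma from \autoref{thm:fcltuvdeg} and \autoref{prop:prespdcase} by the continuous mapping theorem. The first step is to identify the limit. For each $\eta$, the incremental covariance operator $\Gamma(\eta)$ in \eqref{eq:Gammcov} is positive, self-adjoint and trace class on the separable Hilbert space $\rkhs$ (separable by \autoref{prop:contemb}). It therefore has a spectral decomposition $\Gamma(\eta)=\sum_j \lambda_j(\eta)\,\phi_j(\eta)\otimes\phi_j(\eta)$ with $\lambda_j(\eta)\ge 0$ and $\{\phi_j(\eta)\}$ orthonormal. I will choose these measurably in $\eta$, using a measurable selection of eigenpairs; this is possible because $\eta\mapsto\Gamma(\eta)$ is c\`adl\`ag in $S_1(\rkhs)$.

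Expanding the cylindrical Wiener process of \autoref{prop:prespdcase} in the basis $\{\phi_j(\eta)\}$ gives $\sqrt{\Gamma(\eta)}\,d\mb{W}_c(\eta)=\sum_j\sqrt{\lambda_j(\eta)}\phi_j(\eta)\,d\mb{B}_j(\eta)$ with independent standard Brownian motions $\mb{B}_j$. The justification is that the components $\langle \mb{W}_c,\phi_j(\cdot)\rangle$ are continuous martingales with quadratic covariation $\delta_{jk}\,d\eta$, so L\'evy's characterization applies. The series converges in $L^2(\Omega;\rkhs)$ uniformly in $u$: by the It\^o isometry and Doob's inequality, the tail beyond $J$ is bounded by $\int_0^1\sum_{j>J}\lambda_j(\eta)\,d\eta$. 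This tail tends to $0$ by dominated convergence, since $\int_0^1\mathrm{tr}\,\Gamma(\eta)\,d\eta<\infty$. Hence the limit $\xi$ of \autoref{thm:fcltuvdeg} equals the Hilbert-valued process inside the inner products in the statement, and it has continuous paths.

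The second step is continuity of the inner-product map. Define $\Psi: D([0,1],\rkhs)\to D([0,1]^2,\rnum)$ by $\Psi(x)(u,v)=\inprod{x(u)}{x(v)}_{\rkhs}$. Since $\xi\in C([0,1],\rkhs)$ almost surely, it suffices to show that $\Psi$ is continuous at every continuous $x$. If $x_T\to x$ in $J_1$ with $x$ continuous, then $\sup_u\|x_T(u)-x(u)\|_{\rkhs}\to0$, because Skorokhod convergence to a continuous limit upgrades to uniform convergence. Then
\[
|\inprod{x_T(u)}{x_T(v)}-\inprod{x(u)}{x(v)}|\le \|x_T-x\|_\infty(\|x_T\|_\infty+\|x\|_\infty)\to0
\]
uniformly in $(u,v)$. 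Uniform convergence implies convergence in the Skorokhod topology on $D([0,1]^2,\rnum)$. One also checks that $\Psi(x_T)$ is c\`adl\`ag in the multiparameter sense, which holds because each coordinate of $\Psi(x_T)$ inherits c\`adl\`ag behaviour from $x_T$. The continuous mapping theorem then gives the claim.

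The main obstacle is the first step. It requires making the eigen-expansion rigorous when the eigenvalues may cross or be multiple, so that the $\phi_j(\eta)$ are not uniquely determined. My first approach is to avoid eigenvector regularity altogether. I would work with a fixed orthonormal basis $\{e_k\}$ of $\rkhs$ and write $\sqrt{\Gamma(\eta)}\,d\mb{W}_c=\sum_k \sqrt{\Gamma(\eta)}e_k\,d\beta_k$. I would then observe that the law of the resulting Gaussian process depends only on its covariance $\int_0^{u\wedge u'}\Gamma(\eta)\,d\eta$. Any measurable spectral choice yields the same covariance, so the two representations are equal in distribution, and this suffices for a weak-convergence statement.
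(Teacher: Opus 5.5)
Your proposal is correct and takes essentially the paper's route: the paper also uses the map $\Phi(x)(u,v)=\inprod{x(u)}{x(v)}_{\rkhs}$, the bilinear bound $\|\Phi(x)-\Phi(y)\|_\infty\le(\|x\|_\infty+\|y\|_\infty)\|x-y\|_\infty$, and the upgrade from $J_1$ to uniform convergence at a continuous limit, differing only in that it gets almost sure convergence from Skorokhod's representation theorem where you invoke the continuous mapping theorem at continuous points. Your first step, identifying $\int_0^u\sqrt{\Gamma(\eta)}\,d\mb{W}_c(\eta)$ with the eigen-series or else appealing to equality of the Gaussian laws, together with your check that $\Psi(x)$ lies in $D([0,1]^2,\rnum)$, supplies details that the paper's proof leaves implicit.
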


Under stationarity, the limiting process reduces to the field:
    \begin{align*}
    \Big\{
    \sum_{j=1}^\infty\lambda_j \mb{B}_j(u)\mb{B}_j(v): u, v \in [0,1]\Big\}.
    \end{align*}
    Note that by taking $u=v=1$, the limit coincides with the one obtained in \cite{Leucht2013}.

\begin{proof}[Proof of \autoref{lem:inprodfcltdeg}]
  Define the map $\Phi: D([0,1],\rkhs) \to D([0,1]^2,\rnum)$ such that for a path $x \in D([0,1], \rkhs)$ 
  \[
  \Phi(x)(u,v) = \inprod{x(u)}{x(v)}_{\rkhs},  \quad (u,v) \in [0,1]^2.
  \]
  By assumption, we have that $\xi_T \Rightarrow \xi$ as $T \to \infty$ with respect to the Skorokhod topology on $D([0,1],\rkhs)$. Furthermore, $\xi$ has almost surely continuous sample paths, i.e., $\pr(\xi \in C([0,1],\rkhs))=1$.  By Skorokhod's representation theorem there exist on a suitable probability space random elements $\{\tilde{\xi}_T\}_{T \in \mb{N}}$, $\tilde{\xi}$ such that $\mc{L}(\xi_T) =\mc{L}(\tilde{\xi}_T) $ for all $T$ and $\mc{L}(\xi) =\mc{L}(\tilde{\xi})$ and $d_{J_1}(\tilde{\xi}_T, \tilde{\xi}) \to 0$ almost surely, where 
  \[
  d_{J_1}(x,y) = \inf_{\lambda \in \Lambda} \max\Big\{\sup_{u \in [0,1]} |\lambda(u) -u|, \sup_{u \in [0,1]}\|x(u)-y(\lambda(u)) \|_{\rkhs}
\Big\}  \]
  and where $\Lambda$ is the set of all strictly increasing continuous bijections from $[0,1]$ to itself. We recall 
  that $d_{J_1}(\tilde{\xi}_T, \tilde{\xi}) \to 0$ almost surely implies that $\sup_{u \in [0,1]}\|\tilde{\xi}_T(u)- \tilde{\xi}(u)\|_{\rkhs} \to 0$ almost surely. Standard calculations yield that for any $x,y \in D([0,1],\rkhs)$:
  \begin{align*}
      \|\Phi(x)-\Phi(y)\|_{\infty} &=\sup_{u,v} |\inprod{x(u)}{x(v)}_{\rkhs}-\inprod{y(u)}{y(v)}_{\rkhs}| 
      \\& \le (\sup_u\|x(u)\|_{\rkhs} +\sup_{u}\|y(u)\|_{\rkhs}) 
      \sup_{u}\|x(u)-y(u)\|_{\rkhs}.
  \end{align*}
  Hence, the map $\Phi$ is continuous with respect to the uniform topology on bounded sets of $D([0,1],\rkhs)$.
By the extreme value theorem $\sup_u\|\tilde{\xi}(u)\|_{\rkhs}$ is almost surely bounded 
and $\sup_u\|\tilde{\xi}_T(u)\|_{\rkhs}$ is almost surely bounded since $\sup_{u}\|\tilde{\xi}_T(u)-\tilde{\xi}(u)\|_{\rkhs} \to 0$ almost surely. 
 Consequently, the algebraic bound yields $\|\Phi(\tilde{\xi}_T)- \Phi(\tilde{\xi})\|_{\infty} \to 0$ almost surely and thus $\Phi(\tilde{\xi}_T) \rightarrow \Phi(\tilde{\xi})$ almost surely w.r.t. the uniform topology on $D([0,1]^2,\rnum)$. This in turn implies a.s. convergence w.r.t. the Skorokhod topology on $D([0,1]^2,\rnum)$. By separability of the latter, this establishes weak convergence and by the equality of laws, that  $\Phi({\xi}_T) \Rightarrow \Phi({\xi})$ with respect to the Skorokhod topology on $D([0,1]^2,\rnum)$.

 \end{proof}

\subsubsection{Localized discrepancy measures}\label{sec:app}

To illustrate the practical utility of the derived machinery, we highlight its applicability in the context of testing for discontinuities in measure. Crucially, our weak invariance principles for nonstationary random fields allow us to easily incorporate dynamic bandwidths to explicitly characterize the boundary behavior between the degenerate and nondegenerate regimes.

Suppose that \autoref{thm:isombedSch} holds, i.e., the Laplace kernel $k(x,y)=e^{-\lambda d_S(x,y)}$, for some $\lambda>0$, satisfies the following:

\begin{assumption}\label{as:conditionsonK}
    The class of functions $\{k(x,\cdot): x \in S\}$  satisfies the conditions of \autoref{prop:contemb} and $k$ is strictly positive definite.
\end{assumption}
 
To detect a local discontinuity in the marginal distribution at rescaled time $u \in (0,1)$, the stated assumption allows us to formulate a test based on the squared metric
\[
\gamma^2(\mu_{u^+},\mu_{u^-})= \bignorm{\int_S k(\cdot,y)\mu_{u^+}(dy)- \int_S k(\cdot,y)\mu_{u^-}(dy)}^2_{\rkhs} \tageq \label{eq:rkhsbreak}
\]
More specifically, we consider the following pair of hypotheses
\[
H_0 : \gamma^2(\mu_{u^+},\mu_{u^-})=0 \quad \text{ vs } \quad H_1: \gamma^2(\mu_{u^+},\mu_{u^-}) \neq 0.  
\]
A natural estimator takes the form of an appropriately scaled version of
\begin{align*}
\hat{\gamma}^2_T(u,b)= \frac{1}{\flo{bT}^{2}} \Big(&\sum_{t,s=\flo{uT}-\flo{bT}+1}^{\flo{u T}} h({\mb{X}}_{t,T}, {\mb{X}}_{s,T}, \lambda)+\sum_{t,s=\flo{u T}+1}^{\flo{uT}+\flo{bT}} h({\mb{X}}_{t,T}, {\mb{X}}_{s,T}, \lambda)\\& - 2\sum_{t=\flo{uT}-\flo{bT}+1}^{\flo{u T}}\sum_{s=\flo{u T}+1}^{\flo{uT}+\flo{bT}} h({\mb{X}}_{t,T}, {\mb{X}}_{s,T}, \lambda)\Big), \quad (u,b) \in Q^\star
\end{align*}
where $h(\cdot, \cdot, \lambda)=k(\cdot,\cdot)$ and where $Q^\star=\{(u,b) \in \rnum^2: 0\le u\le 1, 0 \le b \le \min(u,1-u)\}$. Recall that we use the convention that $\hat{\gamma}^2_T(u,0)=0$. Here, $b$ takes the role of a bandwidth parameter, and may be taken to converge to zero with the sample size such that $b T \to \infty$ (see \autoref{thm:limitbtdeg}-\autoref{thm:limitbtnondeg}). Note that as $b$ gets smaller, the test is concerned with detecting a discontinuity at point $u$.

Taking the class of functions to be the unit ball of the RKHS for the integral probability metric leads to the classical issue that the appropriate scaling for a nondegenerate limit depends heavily on whether the null hypothesis holds. We make this precise for the above test. Since we assume  \autoref{thm:isombedSch} and thus that the kernel satisfies \autoref{as:conditionsonK}, Parseval's identity shows that the estimator of $\gamma^2(\mu^+,\mu^-)$ can be written as
\begin{align*}
\frac{\flo{bT}^2}{T^2}\hat{\gamma}_T^2(u,b) = \sum_{i=1}^\infty \frac{1}{T^2}\Big( \sum_{t=\flo{uT}+1}^{\flo{uT}+\flo{bT}} \inprod{K_{\mb{X}_{t,T}}}{e_i} - \sum_{t=\flo{uT}-\flo{bT}+1}^{\flo{uT}} \inprod{K_{\mb{X}_{t,T}}}{e_i} \Big)^2,  
\end{align*}
where $\{e_i\}_{i \in \mb{N}}$ is an ONB of the $\rkhs$. 
Recall the notation from \autoref{sec:degsection}, and define
\begin{align*}
    \xi_{T}^+(u,b)(\cdot) &:=\frac{1}{\sqrt{T}} \sum_{t=\flo{uT}+1}^{\flo{uT}+\flo{bT}} \tilde{K}_{\mb{X}_{t,T}}( \cdot)
\\ \xi_{T}^-(u,b)(\cdot)& :=\frac{1}{\sqrt{T}} \sum_{t=\flo{uT}-\flo{bT}+1}^{\flo{uT}} \tilde{K}_{\mb{X}_{t,T}}( \cdot).
\end{align*}
Expanding it further, we obtain
\begin{align*}
\frac{\flo{bT}^2}{T^2}\hat{\gamma}_T^2(u,b) &
    = \sum_{i=1}^\infty \frac{1}{T}\Big( \inprod{\xi^{+}_{T}(u,b)}{e_i}_{\rkhs} -\inprod{\xi^{-}_{T}(u,b)}{e_i}_{\rkhs}  \Big)^2  
    \\& + \sum_{i=1}^\infty \frac{1}{T^2}\Big( \sum_{t=\flo{uT}+1}^{\flo{uT}+\flo{bT}} \inprod{m_{\mu_{t,T}}}{e_i}-\sum_{t=\flo{uT}-\flo{bT}+1}^{\flo{uT}}\inprod{m_{\mu_{t,T}}}{e_i}\Big)^2
    \\& +2 \sum_{i=1}^\infty \frac{1}{T^2}\Big(\sum_{t=\flo{uT}+1}^{\flo{uT}+\flo{bT}} \inprod{\tilde{K}_{\mb{X}_{t,T}}}{e_i} - \sum_{t=\flo{uT}-\flo{bT}+1}^{\flo{uT}} \inprod{\tilde{K}_{\mb{X}_{t,T}}}{e_i} \Big) 
    \\&\quad \quad \quad \times \Big(\sum_{t=\flo{uT}+1}^{\flo{uT}+\flo{bT}} \inprod{m_{\mu_{t,T}}}{e_i}-\sum_{t=\flo{uT}-\flo{bT}+1}^{\flo{uT}}\inprod{m_{\mu_{t,T}}}{e_i}\Big)~. \tageq \label{eq:expgammasq}
\end{align*}
If the process has time-invariant marginals within a ball around $u$, the second and third terms are zero.  Under this scenario, the estimator upscaled with $\sqrt{T}$ coincides with the $U$-statistic considered in \autoref{thm:weakconvnondeggamm}, and converges to  a degenerate limit. Indeed, the first term also converges to zero unless we upscale with a factor of $T$, in which case we obtain:
\begin{thm}\label{thm:nullconvub}
Under the conditions of \autoref{thm:fcltuvdeg}
\begin{align*} 
  &\Big\{\inprod{\xi^{+}_{T}(u,b)}{\xi^{+}_{T}(u,b)}_{\rkhs}+\inprod{\xi^{-}_{T}(u,b)}{\xi^{-}_{T}(u,b)}_{\rkhs}-2 \inprod{\xi^{+}_{T}(u,b)}{\xi^{-}_{T}(u,b)}_{\rkhs}: (u,b) \in Q^\star\Big\}
   \\&  \rightsquigarrow\Big\{ \|\xi^{+}(u,b)\|^2_{\rkhs}+\|\xi^{-}(u,b)\|^2_{\rkhs}-2 \inprod{\xi^{+}(u,b)}{\xi^{-}(u,b)}_{\rkhs}: (u,b) \in Q^\star\Big\},
\end{align*}
where $Q^\star=\{(u,b) \in \rnum^2: 0\le u\le 1, 0 \le b \le \min(u,1-u)\}$.
\end{thm}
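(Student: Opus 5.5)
The plan is to reduce the statement to the functional CLT of \autoref{thm:fcltuvdeg} and then apply a continuous mapping argument, in the spirit of the proof of \autoref{lem:inprodfcltdeg}.

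The key observation is that both local processes are increments of the partial sum process $\xi_T$. Up to a negligible floor-function discrepancy, which is at most one summand of order $T^{-1/2}\sup_x\|\tilde K_x\|_{\rkhs}$ because $k$ is bounded, we have
\[
\xi^{+}_T(u,b)=\xi_T(u+b)-\xi_T(u),\qquad \xi^{-}_T(u,b)=\xi_T(u)-\xi_T(u-b).
\]
These rounding errors vanish uniformly on $Q^\star$ with respect to the supremum of $\|\cdot\|_{\rkhs}$. We therefore define the map $\Psi: D([0,1],\rkhs)\to D(Q^\star,\rnum)$ by
\[
\Psi(x)(u,b)=\big\|x(u+b)-2x(u)+x(u-b)\big\|^2_{\rkhs}.
\]
Expanding the square shows that $\Psi(x)(u,b)$ equals the displayed combination $\inprod{\cdot}{\cdot}$ with $\xi^{\pm}$ replaced by the corresponding increments of $x$. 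The constraint $0\le b\le \min(u,1-u)$ guarantees that $u\pm b\in[0,1]$, so $\Psi$ is well defined. The limit is then identified by setting $\xi^{+}(u,b)=\xi(u+b)-\xi(u)$ and $\xi^{-}(u,b)=\xi(u)-\xi(u-b)$, where $\xi$ is the Gaussian limit from \autoref{thm:fcltuvdeg}. The representation of $\xi$ is given in \autoref{prop:prespdcase}.

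The steps are as follows.
\begin{enumerate}
\item Invoke \autoref{thm:fcltuvdeg} to obtain $\xi_T\rightsquigarrow\xi$ in $D([0,1],\rkhs)$, with $\xi$ almost surely continuous since it is Gaussian with independent increments and trace-class incremental covariance.
\item Apply Skorokhod's representation theorem exactly as in the proof of \autoref{lem:inprodfcltdeg}. Continuity of the limit upgrades $J_1$ convergence to uniform convergence, so we obtain versions with $\sup_u\|\tilde\xi_T(u)-\tilde\xi(u)\|_{\rkhs}\to0$ almost surely.
\item Bound
\[
\|\Psi(x)-\Psi(y)\|_\infty\le 16\big(\sup_u\|x(u)\|_{\rkhs}+\sup_u\|y(u)\|_{\rkhs}\big)\sup_u\|x(u)-y(u)\|_{\rkhs},
\]
using $|\|a\|^2-\|c\|^2|\le(\|a\|+\|c\|)\|a-c\|$. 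Since the sup-norms of $\tilde\xi$ and $\tilde\xi_T$ are almost surely bounded, this gives $\Psi(\tilde\xi_T)\to\Psi(\tilde\xi)$ uniformly on $Q^\star$.
\item Conclude via equality of laws, adding the vanishing floor remainders by Slutsky's lemma.
\end{enumerate}

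The main obstacle is measurability and the choice of topology on $D(Q^\star,\rnum)$. The set $Q^\star$ is a triangle rather than a rectangle, and the maps $(u,b)\mapsto x(u\pm b)$ are compositions that are not monotone in each coordinate. It is therefore not immediate that $\Psi(x)$ lies in a Skorokhod-type space on $Q^\star$ when $x$ is merely càdlàg. I would handle this by noting that the limit $\Psi(\xi)$ is continuous on the compact set $Q^\star$. It then suffices to prove weak convergence in $\ell^\infty(Q^\star)$ in the Hoffmann-J\o rgensen sense, for which uniform almost sure convergence of the Skorokhod versions is enough, and outer-measurability issues disappear because the limit is separable. Alternatively, one can extend $\Psi(x)$ to $[0,1]^2$ by freezing it outside $Q^\star$.
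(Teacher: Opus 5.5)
Your proposal is correct and follows essentially the same route as the paper. You realize $\xi^{\pm}_T$ as increments of $\xi_T$ through a map that is Lipschitz in the uniform norm (the paper's $\theta_{\pm}\circ\Phi_2$, which you fold together with the quadratic form into the single map $\Psi$), and then run the Skorokhod-representation and continuous-mapping argument from the proof of \autoref{lem:inprodfcltdeg}. Two details the paper leaves implicit are handled in your version: the explicit $O(T^{-1/2})$ control of the floor-function remainders, and the observation that diagonal increments of c{\`a}dl{\`a}g paths need not lie in a Skorokhod space on $Q^\star$, which you resolve via $\ell^\infty(Q^\star)$ convergence to a continuous limit.
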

Observe that $Q^\star$ is compact and that, for fixed $u,b$, the mean is given by  $\E \|\xi^{+}(u,b)\|^2_{\rkhs}+\E\|\xi^{-}(u,b)\|^2_{\rkhs} = \int_{u}^{u+b} \mathrm{tr}(\Gamma(\eta)) d\eta+\int_{u-b}^{u} \mathrm{tr}(\Gamma(\eta)) d\eta $
and the variance is given by
\[
2  \mathrm{tr}\Big(\big( \int_{u-b}^{u+b}\Gamma(\eta) \, d\eta \big)^2\Big)
\]
The limiting distribution corresponds to a localized version of the result in \cite[][Theorem 3.1]{Leucht2013}, though weaker assumptions are imposed here, and the method of proof does not rely on Mercer's theorem. 

\begin{proof}[Proof of \autoref{thm:nullconvub}]
    Define $\Phi_2: D([0,1], \rkhs) \to D([0,1]^2,\rkhs)$, $\Phi_2(x)(u,v)=x(v)-x(u)$, 
    as well as $\theta_+: D([0,1]^2, \rkhs) \to D(Q_1,\rkhs)$, $\theta_+(x)(u,v)=x(u,u+v)$ where $Q_1  = \{(u,v) \in \rnum^2: 0 \le u \le 1, -u \le v \le 1-u\}$. Then $\Delta=\theta_+ \circ \Phi_2 : D([0,1], \rkhs) \to D(Q_1, \rkhs)$, $\theta_+(\Phi_2(x))(u,v)=\Phi_2(x)(u,u+v)=x(u+v)-x(u)$. Consider  points $x,y \in D([0,1],\rkhs)$, then
    $\|\Delta(x)-\Delta(y)\|_{\rkhs,\infty}= \sup_{(s,t) \in Q_1}\|x(s+t)-x(s)-y(s+t)+y(s)\|_{\rkhs} \le 2\|x-y\|_{\rkhs,\infty} $, demonstrating that the mapping is globally Lipschitz with respect to the uniform topology on $D([0,1],\rkhs)$. In complete analogy, the same will be true if instead we consider $\theta_-: D([0,1]^2, \rkhs) \to D(Q_2,\rkhs)$, $\theta_-(\Phi_2(x))(u,v)=\Phi_2(x)(u-v,u)$ where $Q_2  = \{(u,v) \in \rnum^2: 0 \le u \le 1, u-1 \le v \le u\}$.  Since the limit has almost surely continuous sample paths, one may invoke the continuous mapping theorem to conclude directly, or alternatively, proceed along the lines of the argument in the proof of \autoref{lem:inprodfcltdeg} to establish the weak convergence of the process defined in the theorem   with respect to the Skorokhod topology on $D(Q^\star, \rnum)$.  
\end{proof}

To understand the non-degenerate limit that results under the alternative, we have the following closed form expression of the limit:
 
\begin{thm}\label{thm:weakconvnondeggamm}
 Under the conditions of \autoref{thm:fcltuv}, with the  $6+\epsilon$ condition in $(ii)$  relaxed to $4+\epsilon$, we have
\[ 
\Big\{\sqrt{T}b^2\Big(\hat{\gamma}^2_T(u,b)- \E \hat{\gamma}^2_T(u,b)\Big) : (u, b) \in Q^\star\Big\}\rightsquigarrow \{\mathbb{Z}(u,b): (u,b) \in Q^\star\}
\quad (T \to \infty). \]
where $\{\mathbb{Z}(u,b): (u,b) \in Q^\star\}$  is a zero-mean Gaussian process such that for each $(u,b) \in Q^\star$, the random variable $\mathbb{Z}(u,b)$ has variance given by 
\[
 4\Big(\int_{u}^{u+b}+\int_{u-b}^u\Big)\Big[\sigma^2_{b,+}(\eta, u) +\sigma^2_{b,-}(\eta, u)-2\sigma^2_{b,\pm}(\eta, u)\Big] d\eta \tageq \label{eq:covkernel}
\]
where  
\[
\sigma_{b,+}^2(\eta, u):=\sum_{\ell \in \znum} \text{Cov}\Big(\int_u^{u+b}\E_{{\mb{X}}^\prime_0(w)}[\hti(\mb{X}_0(\eta), \mb{X}^\prime_0(w))]dw, \int_u^{u+b}\E_{{\mb{X}}^\prime_0(w)}[\hti({\mb{X}}_\ell(\eta),{\mb{X}}^\prime_0(w))]dw\Big); 
\]
\[
\sigma_{b,-}^2(\eta, u):=\sum_{\ell \in \znum} \text{Cov}\Big(\int_{u-b}^{u}\E_{{\mb{X}}^\prime_0(w)}[\hti({\mb{X}}_0(\eta), {\mb{X}}^\prime_0(w))]dw, \int_{u-b}^{u}\E_{{\mb{X}}^\prime_0(w)}[\hti({\mb{X}}_\ell(\eta),{\mb{X}}^\prime_0(w))]dw\Big);
\]
\[
\sigma_{b,\pm}^2(\eta, u):=\sum_{\ell \in \znum} \text{Cov}\Big(\int_u^{u+b}\E_{{\mb{X}}^\prime_0(w)}[\hti({\mb{X}}_0(\eta), {\mb{X}}^\prime_0(w))]dw, \int_{u-b}^{u}\E_{{\mb{X}}^\prime_0(w)}[\hti({\mb{X}}_\ell(\eta),{\mb{X}}^\prime_0(w))]dw\Big). 
\]
\end{thm}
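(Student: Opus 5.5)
The plan is to write $\hat{\gamma}^2_T(u,b)$ as a finite linear combination of rectangular increments of the partial sum process $\mathscr{S}_T$ from \eqref{eq:bbpro}. Then \autoref{thm:fcltuv} and the continuous mapping theorem do the work. Write $a_T=\flo{uT}-\flo{bT}$, $c_T=\flo{uT}$ and $d_T=\flo{uT}+\flo{bT}$. Each double sum over a block $(t,s)\in(\alpha,\beta]\times(\gamma,\delta]$ is a two-dimensional increment
\[
T^2\big[\mathscr{S}_T(\beta,\delta)-\mathscr{S}_T(\alpha,\delta)-\mathscr{S}_T(\beta,\gamma)+\mathscr{S}_T(\alpha,\gamma)\big],
\]
with $\lambda$ fixed and arguments $\alpha,\beta,\gamma,\delta\in\{u-b,u,u+b\}$ (up to $O(1/T)$ rounding). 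Multiplying by $\flo{bT}^2/T^2=b^2+O(1/T)$ gives
\[
\sqrt{T}b^2\big(\hat{\gamma}^2_T(u,b)-\E\hat{\gamma}^2_T(u,b)\big)=\Psi\big(T^{1/2}(\mathscr{S}_T-\E\mathscr{S}_T)\big)(u,b)+o_p(1),
\]
uniformly on $Q^\star$. Here $\Psi:D([0,1]^2)\to D(Q^\star)$ is the linear map
\[
\Psi(x)(u,b)=\square_{[u-b,u]^2}x+\square_{[u,u+b]^2}x-2\,\square_{[u-b,u]\times[u,u+b]}x,
\]
where $\square$ denotes the rectangular increment.

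The rounding error is handled as follows. Replacing $\flo{bT}^2$ by $b^2T^2$ and the floor endpoints by $uT\pm bT$ changes the statistic by at most $O(T^{-1/2})\sup|\,T^{1/2}(\mathscr{S}_T-\E\mathscr{S}_T)|$ plus a deterministic $O(T^{-1/2})$. Both vanish because the sequence is tight, which \autoref{thm:fcltuv} gives. The map $\Psi$ is Lipschitz with constant $16$ in sup-norm on $C([0,1]^2)$. The limit $\mathbb{G}(\cdot,\cdot,\lambda)$ has continuous paths, so the same Skorokhod-representation argument as in the proof of \autoref{lem:inprodfcltdeg} gives $\Psi(T^{1/2}(\mathscr{S}_T-\E\mathscr{S}_T))\rightsquigarrow\mathbb{Z}:=\Psi(\mathbb{G}(\cdot,\cdot,\lambda))$. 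The process $\mathbb{Z}$ is a linear image of a Gaussian process, hence zero-mean Gaussian.

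The weakening of $6+\epsilon$ to $4+\epsilon$ is justified because $\lambda$ is now fixed, so the index set is the two-dimensional $[0,1]^2$ rather than $[0,1]^2\times[0,\Lambda]$. I would re-run the tightness step of \autoref{thm:fcltuv}, a chaining or Bickel–Wichura moment bound on block increments. With parameter dimension $2$, a $p$-th moment with $p>4$ is enough for the block-increment criterion in the same way $p>\alpha$ suffices in \autoref{thm:conditionsfortightpol}. I expect this dimension-counting in the moment inequality to be routine but fiddly.

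The main obstacle is identifying the variance \eqref{eq:covkernel}. I would use the representation in \autoref{prop:infsumrep}: $\mathbb{G}(u,v)=\sum_\ell\int_0^u\tilde g_\ell(\eta,v)\,d\mathbb{B}_\ell(\eta)+\sum_\ell\int_0^v\tilde g_\ell(\eta,u)\,d\mathbb{B}_\ell(\eta)$. A rectangular increment over $(\alpha,\beta]\times(\gamma,\delta]$ then equals
\[
\sum_\ell\int_\alpha^\beta[\tilde g_\ell(\eta,\delta)-\tilde g_\ell(\eta,\gamma)]\,d\mathbb{B}_\ell+\sum_\ell\int_\gamma^\delta[\tilde g_\ell(\eta,\beta)-\tilde g_\ell(\eta,\alpha)]\,d\mathbb{B}_\ell .
\]
By construction, the differences $\tilde g_\ell(\eta,\delta)-\tilde g_\ell(\eta,\gamma)$ correspond to the projection $\int_\gamma^\delta\E_{\mathbf{X}'_0(w)}\hti(\cdot,\mathbf{X}'_0(w))\,dw$. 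Summing the three blocks with their weights $1,1,-2$ therefore makes $\mathbb{Z}(u,b)$ a stochastic integral over $[u-b,u+b]$. Its integrand is $2\times$ the projection onto $[u,u+b]$ minus the projection onto $[u-b,u]$, with a sign that flips across $u$. The Itô isometry gives the variance
\[
4\int_{u-b}^{u+b}\big[\sigma^2_{b,+}+\sigma^2_{b,-}-2\sigma^2_{b,\pm}\big](\eta,u)\,d\eta,
\]
since $\sigma_\eta(z,z')$ differences reproduce $\sigma^2_{b,\pm}$ through bilinearity of the long-run covariance. This is \eqref{eq:covkernel}. The factor $4$ comes from the symmetric double counting, i.e.\ the two symmetric sums in $\mathbb{G}$.
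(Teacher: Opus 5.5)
Your proposal is correct and follows essentially the same route as the paper. The paper's proof likewise writes $\sqrt{T}b^2(\hat{\gamma}^2_T-\E\hat{\gamma}^2_T)=\Upsilon(\Xi_T)$, where $\Upsilon$ is a globally sup-norm Lipschitz linear map (your $\Psi$ built from the three rectangular increments) and $\Xi_T=\sqrt{T}(\mathscr{S}_T-\E\mathscr{S}_T)$ with $\lambda$ fixed, and concludes via \autoref{thm:fcltuv} and the continuous mapping theorem. Your rounding analysis (the errors are in fact deterministically $O(T^{-1/2})$ uniformly on $Q^\star$ because $h$ is bounded), the dimension-counting justification for relaxing $6+\epsilon$ to $4+\epsilon$, and the It\^{o}-isometry derivation of \eqref{eq:covkernel} via \autoref{prop:infsumrep} (which also uses symmetry of the long-run covariance) supply details the paper leaves implicit, and all agree with the stated result.
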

Note that under stationarity in the neighborhood of point $u$, the limiting distribution reduces to a degenerate random variable. The above is a building block in \cite{kvd26}, where the test can detect (relevant) as well as locate multiple abrupt changes with statistical guarantees in a time-varying process.

\begin{proof}[Proof of \autoref{thm:weakconvnondeggamm}]
Recall the notation from \autoref{thm:fcltuv} and denote 
\[
\Xi_T(u,v) = \sqrt{T}\big(S_T(u,v,\lambda)-\E S_T(u,v,\lambda)\big),
\]
so that $\{\Xi_T\}_{T \ge 1} \subset D([0,1]^2, \rnum)$. By \autoref{thm:fcltuv}, we know that $\Xi_T \Rightarrow \Xi$ w.r.t. the Skorokhod topology where $\Xi =\mb{G}(\cdot,\cdot,\lambda)$ and $\pr(\Xi \in C([0,1]^2,\rnum))=1$. Using techniques as in the proof of \autoref{thm:nullconvub}, we can define a map $\Upsilon: D([0,1]^2,\rnum) \to D( Q^\star,\rnum)$ that is globally Lipschitz continuous with respect to the uniform topology on $D([0,1]^2,\rnum)$ such that we can write
\[
\sqrt{T}b^2\big(\hat{\gamma}_T^2-\E \hat{\gamma}_T^2\big)= \Upsilon(\Xi_T).
\]
Hence,  by the continuous mapping theorem $\Upsilon(\Xi_T) \Rightarrow \Upsilon(\Xi)$, with respect to the Skorokhod topology on $D(Q^\star,\rnum)$ where the limiting process has covariance structure as given in the statement.
 
\end{proof}

A particularly important consequence of the weak invariance principles introduced is that we can naturally consider dynamic bandwidth parameters, which is of practical relevance as it establishes the asymptotic stability under dynamic bandwidths.

\begin{thm}\label{thm:limitbtdeg}
Suppose the conditions of \autoref{thm:fcltuvdeg} hold. Let $\{b_T\}$ be a sequence of bandwidth parameters such that $b_T \to 0$ and $b_T T \to \infty$ as $T \to \infty$. 
Then, 
\[
\Big\{ b_T^2 T\Big( \hat{\gamma}^2_T(u,b_T) - \E\hat{\gamma}^2_T(u,b_T) \Big): (u, b_T) \in Q^\star \cap (\rnum \times \{b_T\})\Big\} \Rightarrow 0 \quad (T \to \infty),
\]
where the weak convergence is with respect to the Skorokhod metric restricted to the intersection $Q^\star \cap (\rnum \times \{b_T\})$.
\end{thm}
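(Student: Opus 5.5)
The plan is to use the Parseval decomposition \eqref{eq:expgammasq}. Multiplying through, we get
\[
b_T^2 T\,\hat{\gamma}^2_T(u,b_T)\approx \frac{\flo{b_TT}^2}{T}\hat{\gamma}^2_T(u,b_T) = \|\xi^+_T(u,b_T)-\xi^-_T(u,b_T)\|^2_{\rkhs} + T\,\|M_T(u,b_T)\|^2_{\rkhs} + 2\sqrt{T}\inprod{\xi^+_T-\xi^-_T}{M_T}_{\rkhs},
\]
where $M_T(u,b)=T^{-1}\big(\sum_{t=\flo{uT}+1}^{\flo{uT}+\flo{bT}} m_{\mu_{t,T}}-\sum_{t=\flo{uT}-\flo{bT}+1}^{\flo{uT}} m_{\mu_{t,T}}\big)$. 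The mean term $T\|M_T\|^2$ is deterministic and cancels on centering. The cross term has mean zero, so only the fluctuations of the first and third terms remain to be controlled, uniformly in $u$ with $b=b_T$ fixed by $T$.

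\emph{First term.} Write $\xi^\pm_T(u,b)=\pm(\xi_T(u\pm b)-\xi_T(u))$ with $\xi_T$ as in \autoref{thm:fcltuvdeg}. By that theorem $\xi_T\rightsquigarrow\xi$ in $D([0,1],\rkhs)$, and $\xi$ has a.s.\ continuous paths. By Skorokhod representation, $\sup_u\|\tilde\xi_T(u)-\tilde\xi(u)\|_{\rkhs}\to0$ a.s. Hence
\[
\sup_u\|\tilde\xi^\pm_T(u,b_T)\|_{\rkhs}\le 2\sup_u\|\tilde\xi_T-\tilde\xi\|_{\rkhs}+\omega_{\tilde\xi}(b_T)\to0 \quad \text{a.s.},
\]
where $\omega_{\tilde\xi}$ is the modulus of continuity. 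This modulus tends to zero by uniform continuity on $[0,1]$, since $b_T\to0$. So the first term tends to zero uniformly in $u$. The same holds for its expectation: by the covariance bound in (ii), $\E\|\xi^\pm_T(u,b_T)\|^2_{\rkhs}\lesssim \flo{b_TT}/T\to0$ uniformly.

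\emph{Cross term.} The local stationarity bound gives $\|m_{\mu_{t,T}}-m_{\mu_{s,T}}\|_{\rkhs}\lesssim |t-s|/T+T^{-1}$, using boundedness and the Lipschitz property of $k$ together with \eqref{eq:locstatp}. Then $\|M_T(u,b_T)\|_{\rkhs}=O(b_T^2+b_T/T)$, and the cross term is bounded by
\[
2\sqrt{T}\,\sup_u\|\xi^+_T-\xi^-_T\|_{\rkhs}\,O(b_T^2)=o_p(\sqrt{T}b_T^2).
\]
This step is the main obstacle. Under a genuine alternative with a discontinuity at $u$, $\|M_T\|$ is of order $b_T$ and the centered cross term is not negligible. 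So I would interpret the statement under the local-stationarity (null) regime, where $m_{\mu_{t,T}}$ is Lipschitz in $t/T$, and require $\sqrt{T}b_T^2\cdot o_p(1)\to0$. If that fails for some bandwidths, I would sharpen the estimate by computing the variance directly: the variance of $\inprod{\xi^+_T-\xi^-_T}{M_T}$ is at most $\|M_T\|^2\,\E\|\xi^+_T-\xi^-_T\|^2\lesssim b_T^4\cdot b_T$. The cross term then has variance $O(Tb_T^5)$. Uniformity in $u$ would come from a chaining or maximal inequality over a grid of $u$-values of mesh $1/T$, together with the increment bounds already available.

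Combining the three terms gives uniform convergence in probability to $0$ on $Q^\star\cap(\rnum\times\{b_T\})$. Uniform convergence implies convergence in the Skorokhod metric, and convergence in probability to the constant $0$ implies weak convergence.
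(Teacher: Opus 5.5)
\textbf{The gap: the cross term.} The step that fails is the cross term $2\sqrt{T}\inprod{\xi^+_T-\xi^-_T}{M_T}_{\rkhs}$. No sharper estimate or chaining argument will rescue it, because your own variance bound $O(Tb_T^5)$ is of the right order, and the hypotheses $b_T\to0$, $b_TT\to\infty$ do not force it to zero.

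Concretely, take $X_{t,T}=t/T+\varepsilon_t$ with $\varepsilon_t$ iid $N(0,1)$ and a Gaussian kernel. Every condition of \autoref{thm:fcltuvdeg} holds, and $M_T(u,b)\approx b^2\,\partial_u m_{\mu_u}\neq 0$. At a fixed $u$, the cross term is a sum of $2\flo{b_TT}$ independent bounded summands whose variance is of exact order $Tb_T^5$. For the admissible choice $b_T=T^{-1/6}$ this variance diverges, while the centered first term tends to zero, so the statistic does not converge to $0$.

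So reading the null as ``locally stationary with Lipschitz-varying marginals'' puts you in a regime where the claim is false. Your Lipschitz bound on $t\mapsto m_{\mu_{t,T}}$ in $\rkhs$-norm is also unavailable. \autoref{thm:fcltuvdeg} imposes no Lipschitz condition on $h$, and even for the Laplace kernel $\|K_x-K_y\|_{\rkhs}=(2-2e^{-\lambda d_S(x,y)})^{1/2}$ is only H\"older-$1/2$ in $d_S$.

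\textbf{The missing idea.} The degenerate-scaling result is a null-hypothesis statement in the sense the paper sets up just before \autoref{thm:nullconvub}. The marginals are time-invariant on the windows around every $u$ under consideration. Then $M_T\equiv 0$, the second and third terms of \eqref{eq:expgammasq} vanish identically, and
$\frac{\flo{b_TT}^2}{T}\big(\hat{\gamma}^2_T-\E\hat{\gamma}^2_T\big)=\|\xi^+_T-\xi^-_T\|^2_{\rkhs}-\E\|\xi^+_T-\xi^-_T\|^2_{\rkhs}$ holds exactly. The paper's proof uses this implicitly by routing through \autoref{thm:nullconvub}.

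\textbf{What is already correct.} Once that hypothesis is made explicit, your first-term argument is the entire proof, and it matches the paper's route. The paper applies Skorokhod representation to the field of \autoref{thm:nullconvub} on $Q^\star$ and uses continuity of its limit at $b=0$, where the limit vanishes. You do the same one level down, at $\xi_T$, via the modulus of continuity of $\xi$. Your bound $\E\|\xi^\pm_T(u,b_T)\|^2_{\rkhs}\lesssim b_T$, together with $b_T^2T^2/\flo{b_TT}^2\to1$, handles the centering and the normalization, which the paper leaves implicit.
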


\begin{thm}\label{thm:limitbtnondeg}
Suppose the conditions of \autoref{thm:weakconvnondeggamm} hold. Let $\{b_T\}$ be a sequence of bandwidth parameters such that $b_T \to 0$ and $b_T T \to \infty$ as $T \to \infty$. Then,
\[
\Big\{ b_T^2 \sqrt{T}\Big( \hat{\gamma}^2_T(u,b_T) - \E\hat{\gamma}^2_T(u,b_T) \Big): (u, b_T) \in Q^\star \cap (\rnum \times \{b_T\})\Big\} \Rightarrow 0 \quad (T \to \infty),
\]
where the weak convergence is with respect to the Skorokhod metric restricted to the intersection $Q^\star \cap (\rnum \times \{b_T\})$.
\end{thm}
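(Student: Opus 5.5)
The plan is to reduce the statement to \autoref{thm:weakconvnondeggamm}, together with a uniform bound on the increments of the limiting field near the boundary $b=0$. Write $\Xi_T(u,v)=\sqrt{T}\big(\mathscr{S}_T(u,v,\lambda)-\E\mathscr{S}_T(u,v,\lambda)\big)$. As in the proof of \autoref{thm:weakconvnondeggamm}, $\hat\gamma_T^2(u,b)$ is a signed combination of rectangle sums of the double partial-sum process. Multiplying by $\flo{bT}^2/T^2$ gives the exact identity
\[
\frac{\flo{bT}^2}{T^2}\sqrt{T}\big(\hat{\gamma}^2_T(u,b)-\E\hat{\gamma}^2_T(u,b)\big)=\Upsilon(\Xi_T)(u,b),
\]
where $\Upsilon(x)(u,b)$ is a finite linear combination of the rectangular increments of $x$ over $[u-b,u]^2$, $[u,u+b]^2$ and $[u-b,u]\times[u,u+b]$. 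Since $b_T T\to\infty$, we have $\flo{b_TT}^2/(b_T^2T^2)\to1$ uniformly in $u$. It therefore suffices to show that
\[
\sup_{u:(u,b_T)\in Q^\star}|\Upsilon(\Xi_T)(u,b_T)|\overset{p}{\to}0 .
\]
Convergence in probability to the constant $0$ then gives weak convergence to $0$ in the restricted Skorokhod topology, because this metric is dominated by the uniform norm.

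Each rectangular increment of $x$ over a rectangle with sides of length at most $2b$ is bounded in absolute value by $4\,\omega(x,2b)$, where $\omega(x,\delta)=\sup_{\|(u,v)-(u',v')\|_\infty\le\delta}|x(u,v)-x(u',v')|$ is the modulus of continuity. Hence $\sup_u|\Upsilon(\Xi_T)(u,b_T)|\le C\,\omega(\Xi_T,2b_T)$. The core step is then the asymptotic equicontinuity of $\Xi_T$: for every $\epsilon>0$,
\[
\lim_{\delta\downarrow0}\limsup_{T\to\infty}\pr\big(\omega(\Xi_T,\delta)>\epsilon\big)=0 .
\]
This is a consequence of the tightness part of \autoref{thm:fcltuv} (in the $4+\epsilon$ form used for \autoref{thm:weakconvnondeggamm}, with $\lambda$ fixed), because the limit $\mb{G}$ has continuous paths and $D$-tightness with a continuous limit is equivalent to this condition. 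One point needs care: $\omega$ is the uniform modulus, not the Skorokhod $J_1$ modulus. The jumps of $\Xi_T$ are of size $O_p(T^{-1/2})$ up to the bounded kernel, so the two moduli differ by $o_p(1)$. Since $b_T\to0$, for any $\delta>0$ we have $2b_T<\delta$ eventually, so $\limsup_T\pr(C\omega(\Xi_T,2b_T)>\epsilon)\le\limsup_T\pr(C\omega(\Xi_T,\delta)>\epsilon)$. Letting $\delta\downarrow0$ completes the argument.

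The main obstacle is making precise that the $J_1$ weak convergence of \autoref{thm:fcltuv} gives the uniform-modulus control above. I would first pass to the Skorokhod representation $\tilde\Xi_T\to\tilde\Xi$ uniformly almost surely, as in the proof of \autoref{lem:inprodfcltdeg}, since the limit is continuous. Then $\omega(\tilde\Xi_T,2b_T)\le 2\|\tilde\Xi_T-\tilde\Xi\|_\infty+\omega(\tilde\Xi,2b_T)\to0$ almost surely, by uniform continuity of $\tilde\Xi$ on $[0,1]^2$. This route avoids the jump bookkeeping entirely. Equality in law transfers the convergence in probability back to the original $\Xi_T$. The boundary cases $u<b_T$ or $u>1-b_T$ are excluded by $Q^\star$, and the convention $\hat\gamma^2_T(u,0)=0$ is irrelevant here because $b_T>0$.
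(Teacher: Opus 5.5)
Your argument is correct. Its engine is the same as the paper's: a Skorokhod almost-sure representation, the upgrade from $J_1$ to uniform convergence because the limit has continuous paths, and uniform continuity of that limit. The difference is where you apply it.

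The paper works with the $(u,b)$-indexed field $\hat{\mathbb{Z}}_T$ delivered by \autoref{thm:weakconvnondeggamm}. It bounds $\sup_u|\hat{\tilde{\mathbb{Z}}}_T(u,b_T)-\tilde{\mathbb{Z}}(u,0)|$ by $\|\hat{\tilde{\mathbb{Z}}}_T-\tilde{\mathbb{Z}}\|_\infty$ plus the oscillation of $\tilde{\mathbb{Z}}$ in the $b$-direction, and then concludes with $\mathbb{Z}(u,0)=0$. You instead go down to the double partial-sum process $\Xi_T$ of \autoref{thm:fcltuv}. You dominate the statistic by the modulus of continuity of $\Xi_T$ over rectangles of side at most $b_T$. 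The vanishing at $b=0$ is then built in, and you need only the FCLT for $\Xi_T$ (under the relaxed moment condition), not the conclusion of \autoref{thm:weakconvnondeggamm}.

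Your version is also more careful about normalization. You keep the factor $\flo{b_TT}^2/T^2$ that actually appears and use $b_TT\to\infty$ to replace it by $b_T^2$. The paper's identity $\sqrt{T}b^2(\hat\gamma_T^2-\E\hat\gamma_T^2)=\Upsilon(\Xi_T)$ absorbs this factor without comment.

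One small correction: your identity is exact only if $\Upsilon$ evaluates $\Xi_T$ at the grid points $(\flo{uT}-\flo{bT})/T$, $\flo{uT}/T$, $(\flo{uT}+\flo{bT})/T$, not at $u-b,u,u+b$. In general $\flo{uT}-\flo{bT}\neq\flo{(u-b)T}$. These grid points still span rectangles of side at most $b$, so your modulus bound is unaffected. As a side benefit, the supremum over $u$ becomes a maximum over finitely many evaluations of $\Xi_T$, which makes the transfer back from the Skorokhod representation a clean equality in law.
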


\begin{proof}[Proof of \autoref{thm:limitbtdeg}/\autoref{thm:limitbtnondeg}]
   We first focus on \autoref{thm:limitbtnondeg}. Let $\hat{\mb{Z}}_T(u,b) = \sqrt{T}b^2 \big(\hat{\gamma}^2_T(u,b)- \E \hat{\gamma}^2_T(u,b)\big)$. By \autoref{thm:weakconvnondeggamm}, we have $\hat{\mb{Z}}_T \Rightarrow \mb{Z}$ with respect to the Skorokhod topology on $D(Q^\star,\rnum)$ and $\pr(\mb{Z} \in C(Q^\star,\rnum))=1$.  By Skorokhod's representation, there exists on an enriched probability space a sequence of random elements $\{\hat{\tilde{\mb{Z}}}_T\}_{T \ge 1} \subset D(Q^\star,\rnum)$ and $\tilde{\mb{Z}} \in D(Q^\star,\rnum)$ such that $\mc{L}(\hat{\mb{Z}}_T) =\mc{L}(\hat{\tilde{\mb{Z}}}_T)$ for all $T \ge 1$ and $\mc{L}(\mb{Z}) =\mc{L}(\tilde{\mb{Z}})$, and such that $\hat{\tilde{\mb{Z}}}_T \to \tilde{\mb{Z}}$ almost surely in $(D(Q^\star,\rnum),d_{J_1})$. Consequently, 
    \[
    \sup_{(u,b) \in Q^\star} |\hat{\tilde{\mb{Z}}}_T(u,b)-\tilde{\mb{Z}}(u,b)| \to 0 \quad \tageq \label{eq:remarkonbt}
    \]
    almost surely as $T \to \infty$. Observe that
   \begin{align*}
   &\sup_{\{u \in [0,1]: (u,b_T) \in Q^\star\}}|\hat{\tilde{\mb{Z}}}_T(u,b_T)-\tilde{\mb{Z}}(u,0)| 
   \\ & \le \sup_{\{u \in [0,1]: (u,b_T) \in Q^\star\}}
     \Big( |\hat{\tilde{\mb{Z}}}_T(u,b_T)-\tilde{\mb{Z}}(u,b_T)|+|\tilde{\mb{Z}}(u,b_T)-\tilde{\mb{Z}}(u,0)|\Big)
      \\& \le 
      \sup_{(u,b) \in Q^\star}|\hat{\tilde{\mb{Z}}}_T(u,b)-\tilde{\mb{Z}}(u,b)|+\sup_{\{u \in [0,1]: (u,b_T) \in Q^\star\}}|\tilde{\mb{Z}}(u,b_T)-\tilde{\mb{Z}}(u,0)|,
   \end{align*}
   which converges almost surely to 0 by \eqref{eq:remarkonbt} and the fact that $\pr(\tilde{\mb{Z}} \in C(Q^{\star},\rnum))=1$. More specifically, with probability 1, $\hat{\tilde{\mb{Z}}}_T(u,b_T) \to \tilde{\mb{Z}}(u,0)$ with respect to the uniform metric restricted to the intersection $Q^\star \cap (\rnum \times \{b_T\})$. This implies that  $\hat{\mb{Z}}_T(u,b_T) \Rightarrow  {\mb{Z}}(u,0)$ with respect to the Skorokhod metric restricted to the intersection $Q^\star \cap (\rnum \times \{b_T\})$. The result follows by noting that ${\mb{Z}}(u,0)=0$ almost surely. %
   A similar argument applies to the degenerate case with respect to the altered scaling factor and altered limit process. Details are omitted for the sake of brevity. 
\end{proof}

It is worth mentioning that for the particular hypothesis at hand, the more general setting introduced in \autoref{sec:general} can also be used.

\begin{lemma}\label{lem:apconsisgridlp}
   Assume that $\bar{\mu} = \lim_{T \to \infty} \bar{\mu}_T$ exists as an element of $\mc{P}(S)$. Then, under the conditions of \autoref{thm:conditionsfortightpol} or of \autoref{thm:conditionsfortight}, for a fixed bandwidth $b > 0$ such that $(u,b) \in Q^\star$ for all $u$ in the support of $\Delta_{1,n}$, we have
   \begin{align*}
&\frac{1}{a}\int_0^a \int_S \int_{\{u \in [0,1]: (u,b) \in Q^\star\}} \sqrt{T} \left| \int_S \tilde{f}_{y,r} (x) (\mu_{T,u+b}-\mu_{T,u})(dx) \right.
\\&\phantom{\frac{1}{a}\int_0^a \int_S\int \sqrt{T} \left| \right.} \left. - \int_S \tilde{f}_{y,r} (x) (\mu_{T,u}-\mu_{T,u-b})(dx) \right| \Delta_{1,n}(du) \, \bar{\mu}_T(dy) \, \Delta_{a,n}(dr)
   \\& \Rightarrow \frac{1}{a}\int_0^a \int_S \int_{\{u \in [0,1]: (u,b) \in Q^\star\}} \left| \zeta(u+b,y,r)+ \zeta(u-b,y,r)-2 \zeta(u,y,r) \right| du \, \bar{\mu}(dy) \, dr
  \end{align*}
  as $n \to \infty$ and $T \to \infty$, where we used the notation as in \autoref{thm:consisgridlp}.
\end{lemma}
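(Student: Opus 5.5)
The plan is to reduce the statement to \autoref{lem:intrandom}, in exactly the way \autoref{thm:consisgridlp} was reduced to it. Write $Q_b:=\{u\in[0,1]:(u,b)\in Q^\star\}=[b,1-b]$, which is compact, and set $E_b:=Q_b\times S\times[0,a]$. This set is compact under $d_E$ because $S$ is compact in the setting of \autoref{thm:conditionsfortightpol} and \autoref{thm:conditionsfortight}.

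The first step is to rewrite the integrand as a map of the sequential empirical field. The measure $\mu_{T,u+b}-\mu_{T,u}$ collects the observations with indices in $(\flo{uT},\flo{(u+b)T}]$, and $\sqrt{T}\int_S\tilde f_{y,r}\,d\mu_{T,u}=\zeta_T(u,y,r)-\E\zeta_T(u,y,r)$. Writing $\bar\zeta_T:=\zeta_T-\E\zeta_T$, the integrand therefore equals $|\Upsilon(\bar\zeta_T)(u,y,r)|$, where
\[
\Upsilon(x)(u,y,r)=x(u+b,y,r)+x(u-b,y,r)-2x(u,y,r).
\]
Up to a floor-function discrepancy, this holds for $(u,y,r)\in E_b$. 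The discrepancy comes from $\flo{(u+b)T}$ versus $\flo{uT}+\flo{bT}$: it involves at most one summand and so is $O(T^{-1/2})$ uniformly, since $f_{y,r}$ is bounded by $1$. It can therefore be absorbed.

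The second step is to carry the invariance principle through $\Upsilon$. Exactly as in the proof of \autoref{thm:nullconvub}, $\Upsilon$ maps $D([0,1]\times S\times[0,a])$ into $D(E_b)$ and satisfies $\|\Upsilon(x)-\Upsilon(x')\|_\infty\le 4\|x-x'\|_\infty$. By \autoref{cor:fcltmetric}, $\bar\zeta_T\rightsquigarrow\zeta$, and $\zeta$ has almost surely continuous paths. Because the limit is continuous, the Skorokhod convergence can be upgraded to uniform convergence via Skorokhod representation, just as in the proof of \autoref{lem:inprodfcltdeg}. The continuous mapping theorem then gives $\Upsilon(\bar\zeta_T)\rightsquigarrow\Upsilon(\zeta)$ in $D(E_b)$, and the limit is still a continuous Gaussian field.

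The third step handles the integrating measure. Let $\nu_{T,n}:=\Delta_{1,n}|_{Q_b}\otimes\bar\mu_T\otimes\Delta_{a,n}$, renormalised if needed so that it matches the $du\,dr/a$ normalisation in the limit. Evenly spaced grids converge weakly to uniform measure, so $\Delta_{a,n}\Rightarrow a^{-1}\lambda|_{[0,a]}$ and, similarly, $\Delta_{1,n}$ restricted to $Q_b$ converges weakly to Lebesgue measure on $Q_b$. Together with $\bar\mu_T\Rightarrow\bar\mu$, which holds in probability by \autoref{thm:ergthm} or by assumption, the product of weakly convergent measures converges weakly to the deterministic product measure $\lambda|_{Q_b}\otimes\bar\mu\otimes a^{-1}\lambda|_{[0,a]}$ on the compact space $E_b$. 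Applying \autoref{lem:intrandom} with $p=1$ to the pair $(\Upsilon(\bar\zeta_T),\nu_{T,n})$ then yields the claim. Note that the lemma's proof uses a continuous limit and a deterministic limiting measure, and both conditions are met here.

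The main obstacle is the boundary behaviour of the restricted grid measure. The support of $\Delta_{1,n}$ is intersected with $Q_b$, and the mass that $\Delta_{1,n}$ places on $Q_b$ must converge to $\lambda(Q_b)$. This mass converges because the boundary points $b$ and $1-b$ have Lebesgue measure zero, so the portmanteau theorem applies. A second point needs care: \autoref{lem:intrandom} is stated for random probability measures that converge weakly, while here $\bar\mu_T$ only converges in probability. I would first deal with this by passing to almost surely convergent subsequences, or equivalently by working with the metric $d_{FM}$ as in that lemma's proof, where convergence in probability is all that is used to obtain the joint convergence.
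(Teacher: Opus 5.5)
Your proposal is correct and follows the route the paper intends: compose the invariance principle of \autoref{cor:fcltmetric} with the Lipschitz second-difference map (as in the proofs of \autoref{thm:nullconvub} and \autoref{thm:weakconvnondeggamm}). Then apply \autoref{lem:intrandom} to the weakly convergent product measure $\Delta_{1,n}|_{Q_b}\otimes\bar{\mu}_T\otimes\Delta_{a,n}$ on the compact set $E_b$, exactly as in the proof of \autoref{thm:consisgridlp}. (One simplification: no floor-function correction is needed, because $\mu_{T,u+b}-\mu_{T,u}$ is defined through $\flo{(u+b)T}$ directly, so the integrand equals $|\Upsilon(\zeta_T-\E\zeta_T)(u,y,r)|$ exactly.)
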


We note again that a slight adaptation in the proof of \autoref{thm:limitbtnondeg}  will also allow us to show asymptotic stability under dynamic bandwidth regimes for this statistic,  with the minor change that we designate the interval of integration for parameter $r$ as $[0,a]$ to avoid notational clashes with the bandwidth parameter $b$. It is perhaps also interesting to note that if the kernel is strictly positive definite and $S$ is locally compact, then \autoref{cor:fcltmetric}, combined with an argument as in \autoref{thm:consisgridlp} and techniques from the proof of \autoref{thm:fcltuvdeg}, can be used to construct estimators of the left-hand side of \eqref{eq:gamballrkhs}. 
    If the kernel is not strictly positive definite, then the interpretation of \eqref{eq:gamballrkhs} as a metric breaks down. However, 
  \autoref{cor:fcltmetric} and/or \autoref{thm:consisgridlp} can be used for measure-theoretic inference as long as the ball property holds, where the $L^p$-based tests have the advantage over tests based on RKHS that they remain nondegenerate under both the null and alternative. This is an advantage over statistics where the limiting distribution is discontinuous at the boundary $\gamma=0$ of the parameter space, lacking uniform asymptotic validity \citep[see e.g.,][]{andrews2001,andrews2010,LeebPot05}. Indeed, such statistics are known to suffer from serious size distortions in finite samples and have poor power against local alternatives.  This issue will be aggravated by embedding the topological space into a Hilbert space if, in addition, the geometric property in \autoref{thm:isombedSch} fails. 
A detailed study of this, alongside the construction of formal statistical procedures
and their finite-sample evaluations, forms the subject of ongoing work.

\section*{Acknowledgements}
The author gratefully acknowledges support by NSF grant DMS-2311338.

\section*{Appendix}
Detailed mathematical proofs for the main theoretical results are provided in a technical appendix, which is available from the author upon request.


\begin{thebibliography}{}

\bibitem{andrews2001}
Andrews, D.~W.~K. (2001).
\newblock Testing when a parameter is on the boundary of the maintained hypothesis.
\newblock {\em Econometrica}, 69(3), 683--734.

\bibitem[{Andrews and Guggenberger(2010)}]{andrews2010}
Andrews, D.~W.~K. and Guggenberger, P. (2010).
\newblock Asymptotic size and a problem with subsampling and a normal or chi-square limit distribution.
\newblock {\em Econometric Theory}, 26(2), 426--468.

\bibitem{aronszajn1950}
Aronszajn, N. (1950).
\newblock Theory of reproducing kernels.
\newblock {\em Transactions of the American Mathematical Society}, 68(3), 337--404.

\bibitem{berbee1979}
Berbee, H.~C.~P. (1979).
\newblock {\em Random Walks with Stationary Increments and Renewal Theory}.
\newblock Mathematical Centre Tracts, Amsterdam.

\bibitem{berg1984}
Berg, C., Christensen, J.~P.~R., and Ressel, P. (1984).
\newblock {\em Harmonic Analysis on Semigroups: Theory of Positive Definite and Related Functions}.
\newblock Springer-Verlag, New York.

\bibitem[{Billingsley (1968)}]{bil68}
Billingsley, P. (1968).
\newblock {\em Convergence of Probability Measures}.
\newblock Wiley, New York.

\bibitem[{Billingsley (1995)}]{bil95}
Billingsley, P. (1995).
\newblock {\em Probability and Measure, 3rd Edition}.
\newblock John Wiley and Sons, New York.

\bibitem{bochner1932}
Bochner, S. (1932).
\newblock {\em Vorlesungen über Fouriersche Integrale}.
\newblock Akademische Verlagsgesellschaft, Leipzig.

\bibitem{borgwardt2006}
Borgwardt, K.~M., Gretton, A., Rasch, M.~J., Kriegel, H.-P., Sch{\"o}lkopf, B., and Smola, A.~J. (2006). 
\newblock Integrating structured biological data by kernel maximum mean discrepancy.
\newblock {\em Bioinformatics}, 22(14), e49--e57.

\bibitem{BBD01}
Borovkova, S., Burton, R., and Dehling, H. (2001). 
\newblock Limit theorem for functionals of mixing processes with applications to U-statistics and dimension estimation.
\newblock {\em Transactions of the American Mathematical Society}, 353(11), 4261--4318.

\bibitem[{Christensen (1980)}]{Christensen}
Christensen, J.~P.~R. (1980).
\newblock A survey of small ball theorems and problems.
\newblock {\em Lecture Notes in Mathematics}, 794, 24--30.

\bibitem{CramerWold1936}
Cram{\'e}r, H. and Wold, H. (1936).
\newblock Some theorems on distribution functions.
\newblock {\em Journal of the London Mathematical Society}, 11(4), 290--294.

\bibitem{dahlhaus1997}
Dahlhaus, R. (1997).
\newblock Fitting time series models to non-stationary processes.
\newblock {\em The Annals of Statistics}, 25, 1--37.

\bibitem{Davies}
Davies, R.~O. (1971).
\newblock Measures not approximable or specificable by means of balls.
\newblock {\em Mathematica}, 18, 157--160.

\bibitem{DZ08}
Davydov, Y. and Zitikis, R. (2008). 
\newblock On weak convergence of random fields.
\newblock {\em Annals of the Institute of Statistical Mathematics}, 60(2), 345--365.

\bibitem{Dudley1966}
Dudley, R.~M. (1966).
\newblock Convergence of Baire measures.
\newblock {\em Studia Mathematica}, 27(3), 251--268.

\bibitem{dudley2002}
Dudley, R.~M. (2002).
\newblock {\em Real Analysis and Probability}.
\newblock Cambridge University Press, Cambridge.

\bibitem{feller1971}
Feller, W. (1971).
\newblock {\em An Introduction to Probability Theory and Its Applications, Volume 2} (2nd ed.).
\newblock John Wiley \& Sons.

\bibitem{fortet1953}
Fortet, R. and Mourier, E. (1953).
\newblock Convergence de la loi de probabilit{\'e} de la moyenne d'un {\'e}chantillon.
\newblock {\em Studia Mathematica}, 13(1), 1--10.

\bibitem{gelfand1936}
Gelfand, I. (1936).
\newblock Sur les fonctions abstraitement continues et pures d'un certain type.
\newblock {\em Moscou Math{\'e}matique Recueil}, 4, 235--244.

\bibitem{ginenickl2016}
Gin{\'e}, E. and Nickl, R. (2016).
\newblock {\em Mathematical Foundations of Infinite-Dimensional Statistical Models}.
\newblock Cambridge University Press, Cambridge.

\bibitem{graykieffer1980}
Gray, R.~M. and Kieffer, J.~C. (1980).
\newblock Asymptotically mean stationary measures.
\newblock {\em The Annals of Probability}, 8(5), 962--973.

\bibitem{gretton2012}
Gretton, A., Borgwardt, K.~M., Rasch, M.~J., Sch{\"o}lkopf, B., and Smola, A.~J. (2012). 
\newblock A kernel two-sample test.
\newblock {\em Journal of Machine Learning Research}, 13(25), 723--773.

\bibitem{Gromov}
Gromov, M. (1999). 
\newblock {\em Metric structures for Riemannian and non-Riemannian spaces}.
\newblock Birkh{\"a}user, Basel.


\bibitem[Jordan and von Neumann(1935)]{JordanNeumann1935}
Jordan, P. and von Neumann, J. (1935).
\newblock On inner products in linear, metric spaces.
\newblock {\em Annals of Mathematics}, 36(3), 719--723.

\bibitem{kallenberg1997}
Kallenberg, O. (1997).
\newblock {\em Foundations of Modern Probability}.
\newblock Springer-Verlag, New York.


\bibitem{kvd26}
Knauth, W. and van Delft, A. (2026).
\newblock Detection and localization of discontinuities in measure within nonstationary time series of random geometric objects.
\newblock \emph{Manuscript in preparation}.



\bibitem{kolmogorov1933}
Kolmogorov, A.~N. (1933).
\newblock {\em Grundbegriffe der Wahrscheinlichkeitsrechnung}.
\newblock Springer, Berlin. (English translation: {\em Foundations of the Theory of Probability}, Chelsea, New York, 1950).

\bibitem{ledouxtalagrand}
Ledoux, M. and Talagrand, M. (1991).
\newblock {\em Probability in Banach Spaces: Isoperimetry and Processes}.
\newblock Ergebnisse der Mathematik und ihrer Grenzgebiete. Springer-Verlag, Berlin.

\bibitem[{Leeb and P{\"o}tscher(2005)}]{LeebPot05}
Leeb, H. and P{\"o}tscher, B.~M. (2005).
\newblock Model selection and inference: Facts and fiction.
\newblock {\em Econometric Theory}, 21(1), 21--59.

\bibitem{Leucht2013}
Leucht, A. and Neumann, M.~H. (2013).
\newblock Dependent wild bootstrap for degenerate $U$- and $V$-statistics.
\newblock {\em Journal of Multivariate Analysis}, 117, 257--280.

\bibitem{Levy1925}
L{\'e}vy, P. (1925).
\newblock {\em Calcul des probabilit{\'e}s}.
\newblock Gauthier-Villars, Paris.



\bibitem{Muller1997}
M{\"u}ller, A. (1997).
\newblock Integral probability metrics and their generating classes of functions.
\newblock {\em Advances in Applied Probability}, 29(2), 429--443.

\bibitem{parzen1962}
Parzen, E. (1962).
\newblock Probability density functionals and reproducing kernel Hilbert spaces.
\newblock In {\em Proceedings of the Symposium on Time Series Analysis}, pages 155--169. Wiley, New York.

\bibitem{parzen1963}
Parzen, E. (1963).
\newblock Probability Density Functionals and Reproducing Kernel Hilbert Spaces.
\newblock In Rosenblatt, M. (Ed.), {\em Proceedings of the Symposium on Time Series Analysis}, pages 155--169. Wiley, New York.

\bibitem{pettis1938}
Pettis, B.~J. (1938).
\newblock On integration in vector spaces.
\newblock {\em Transactions of the American Mathematical Society}, 44(2), 277--304.

\bibitem{PreissTisier91}
Preiss, D. and Ti{\v{s}}er, J. (1991).
\newblock Measures in Banach spaces are determined by their values on balls.
\newblock {\em Mathematika}, 38(2), 391--397.

\bibitem{Prokhorov1956}
Prokhorov, Y.~V. (1956).
\newblock Convergence of random processes and limit theorems in probability theory.
\newblock {\em Theory of Probability \& Its Applications}, 1(2), 157--214.

\bibitem{rosenblatt1959}
Rosenblatt, M. (1959).
\newblock Statistical analysis of stochastic processes with stationary residuals.
\newblock In {\em Probability and Statistics: The Harald Cram{\'e}r Volume}, pages 246--275. Almqvist \& Wiksell, Stockholm.

\bibitem{sanwagpan26}
Santoro, L.~V., Waghmare, K.~G., and Panaretos, V.~M. (2026).
\newblock Kernel embeddings and the separation of measure phenomenon.
\newblock {\em Proceedings of the National Academy of Sciences}, 123(23), e2522504123.

\bibitem{schoenberg1938}
Schoenberg, I.~J. (1938).
\newblock Metric spaces and positive definite functions.
\newblock {\em Transactions of the American Mathematical Society}, 44(3), 522--536.

\bibitem{schoenberg1938monotone}
Schoenberg, I.~J. (1938).
\newblock Metric spaces and completely monotone functions.
\newblock {\em Annals of Mathematics}, 39(4), 811--841.

\bibitem[Schwartz(1966)]{schwartz1966}
Schwartz, L. (1966).
\newblock {\em Th{\'e}orie des distributions}, volume~2.
\newblock Hermann, Paris.

\bibitem{simongab2023}
Simon-Gabriel, C.-J., Barp, A., Sch{\"o}lkopf, B., and Mackey, L. (2023).
\newblock Metrizing weak convergence with maximum mean discrepancies.
\newblock {\em Journal of Machine Learning Research}, 24(184), 1--20.

\bibitem{songmuller26}
Song, W. and M{\"u}ller, H.-G. (2026).
\newblock Inference for dispersion and curvature of random objects.
\newblock {\em Journal of the American Statistical Association}, 121(574), 729--740.

\bibitem{sriperumbudur2010}
Sriperumbudur, B.~K., Gretton, A., Fukumizu, K., Sch{\"o}lkopf, B., and Lanckriet, G.~R.~G. (2010). 
\newblock Hilbert space embeddings and metrics on probability measures.
\newblock {\em Journal of Machine Learning Research}, 11, 1517--1561.

\bibitem{SR06}
Subba~Rao, S. (2006).
\newblock On some nonstationary, nonlinear random processes and their stationary approximations.
\newblock {\em Advances in Applied Probability}, 38(4), 1155--1172.

\bibitem{Szekely1985}
Sz{\'e}kely, G.~J. (1985).
\newblock Homogeneity tests.
\newblock {\em Statistics \& Probability Letters}, 3(5), 263--265.

\bibitem{vakhania1987}
Vakhania, N.~N., Tarieladze, V.~I., and Chobanyan, S.~A. (1987).
\newblock {\em Probability Distributions on Banach Spaces}.
\newblock Mathematics and its Applications, vol. 14. D. Reidel Publishing Company, Dordrecht.

\bibitem[{van Delft and Blumberg (2025)}]{vDB25}
van Delft, A. and Blumberg, A.~J. (2025).
\newblock A statistical framework for analyzing shape in a time series of random geometric objects.
\newblock {\em Annals of Statistics}, 53(2), 561--588.

\bibitem{wendler2012}
Wendler, M. (2012).
\newblock $U$-processes under weak dependence.
\newblock {\em Stochastic Processes and their Applications}, 122(6), 2307--2326.

\bibitem{Wiener1958}
Wiener, N. (1958).
\newblock {\em Nonlinear Problems in Random Theory}.
\newblock Technology Press of MIT and John Wiley \& Sons, New York.

\bibitem{Wu2005}
Wu, W.~B. (2005).
\newblock Nonlinear system theory: Another look at dependence.
\newblock {\em Proceedings of the National Academy of Sciences}, 102(40), 14150--14154.

\end{thebibliography}
\end{document}